\newtheorem{theorem}{Theorem}[section]
\newtheorem{lemma}[theorem]{Lemma}  
\newtheorem{conjecture}[theorem]{Conjecture}  
\newtheorem{corollary}[theorem]{Corollary} 
\newtheorem{proposition}[theorem]{Proposition} 
\newenvironment{rmk}{\medskip \noindent {\bf Remark.}} \medskip
\numberwithin{equation}{section}
\newcommand{\Z}{\mathbb{Z}}
\newcommand{\ZP}{\mathbb{Z}_+}
\newcommand{\R}{\mathbb{R}}
\newcommand{\spa}{{\mathrm{span}}}
\newcommand{\ud}{\textup{d}}
\newcommand{\re}{{\mathrm{e}}}
\newcommand{\ldp}{\lim_{n \to \infty} \frac{1}{n} \log}
\renewcommand{\mod}{\mathrm{mod}~}
\newcommand{\la}{{\lambda}}
\newcommand{\eps}{\varepsilon}
\newcommand{\Bin}{{\rm Bin}}
\newcommand{\Po}{{\rm Po}}
\newcommand{\bt}{{\bf t}}
\newcommand{\bh}{{\bf h}}
\newcommand{\bg}{{\bf g}}
\newcommand{\bbf}{{\bf f}}
\newcommand{\bd}{{\bf d}}
\newcommand{\bw}{{\bf w}}
\newcommand{\tod}{\stackrel{d}{\longrightarrow}}
\newcommand{\ubar}[1]{\underline{#1\mkern-4mu}\mkern4mu }
\newcommand{\N}{\mathbb{N}}
\newcommand{\NN}{{\mathcal{N}}}
\newcommand{\VV}{{\mathcal{V}}}
\newcommand{\EE}{{\mathcal{E}}}
\newcommand{\FF}{{\mathcal{F}}}
\newcommand{\DD}{{\mathcal{D}}}
\newcommand{\GG}{{\mathcal{G}}}
\renewcommand{\Pr}{\mathbb{P}}
\newcommand{\Exp}{\mathbb{E}}
\newcommand{\Var}{\mathbb{V} {\rm ar}}
\newcommand{\Prb}{{\Pr^{\mathrm{bin}}_{\rho_n}}}
\newcommand{\Expb}{{\Exp^{\mathrm{bin}}_{\rho_n}}}
\newcommand{\Expn}{{\Exp_{\rho_n}}}
\newcommand{\Prn}{{\Pr_{\rho_n}}}
\newcommand{\1}{{\bf 1}}
\newcommand{\0}{{\bf 0}}
\newcommand{\starg}{g^*}
\begin{document}

\title{Rank deficiency in sparse random GF$[2]$ matrices}
\author{R.W.R.\ Darling\footnote{Mathematics Research Group, National Security Agency}
\and  Mathew D.\ Penrose\footnote{Department of Mathematical Sciences,
University of Bath}
\and  Andrew R.\ Wade\footnote{Department of Mathematical Sciences,
University of Durham}
\and Sandy L.\ Zabell\footnote{Mathematics Department, Northwestern University}}
\date{\today}
\maketitle

\begin{abstract}
Let $M$ be a random $m \times n$ matrix with binary entries
and i.i.d.\ rows.
The  weight (i.e., number of ones) of a row
has a specified probability distribution,
with the row
chosen uniformly at random  given its weight.
 Let $\NN (n,m)$ denote the number of
left null vectors  in $\{0,1\}^m$ for $M$ (including the zero vector), where
addition is mod 2.
We take $n, m \to \infty$,
with $m/n \to \alpha > 0$, while
the weight distribution may vary with $n$ but
converges weakly to  a limiting
 distribution on $\{3, 4, 5, \ldots\}$;
let $W$ denote a variable with this limiting distribution.
Identifying $M$ with a hypergraph on $n$ vertices,
we define the {\em 2-core} of $M$
as the terminal state of an iterative algorithm
that deletes every  row incident to a column of degree 1.

We identify two thresholds $\alpha^*$ and $\ubar \alpha$,
and describe them analytically in terms of
the
 distribution of $W$.
Threshold $\alpha^*$ marks the infimum of values of $\alpha$ at which
$n^{-1} \log{\Exp [\NN (n,m)}]$ converges to a positive limit,
while $\ubar{\alpha}$ marks the infimum
of values of $\alpha$ at which there is
a 2-core of non-negligible size compared to $n$ having more rows
than non-empty columns.

We have $1/2 \leq \alpha^* \leq \ubar{\alpha} \leq 1$,
and typically these inequalities are strict;
for example when
 $W = 3$ almost surely,  numerics give 
 $\alpha^* = 0.88949 \ldots$ and  $\ubar \alpha=  0.91793 \ldots$
(previous work on this model has mainly
been concerned with such cases where $W$ is non-random).
The threshold of values of $\alpha$ for which
 $\NN(n,m) \geq 2$ in probability
lies in $[\alpha^*,\ubar \alpha]$ and is conjectured to
equal $\ubar \alpha$.

The random row weight setting gives rise to interesting new phenomena not present in the
non-random case
that has been the focus of previous work.
 \end{abstract}

\medskip

\noindent
{\em Key words:} Random sparse matrix, null vector, hypercycle, random allocation,
  XORSAT,  phase transition, hypergraph core, random equations, large deviations, Ehrenfest model

\medskip

\noindent
{\em AMS Subject Classification:} 60C05 (Primary) 05C65; 05C80; 15B52; 60B20; 60F10 (Secondary)

\newpage

\tableofcontents

\newpage

\section{Introduction}
\label{sec:intro}

Suppose that $M:=M(n,m)$ is an $m \times n$ matrix with
entries in $\{0, 1\}$,  each of whose rows contains at least one $1$, for which we seek a left
null vector over GF$[2]$, i.e.\ a row vector
$ a \in \{0, 1\}^m$ such that $ a  M \equiv \0$ (mod 2), where
here and elsewhere $\0$ is
the all-$0$ vector. 
More generally, elements of $M$ might belong to the finite field
GF[$q$] of order $q$. We are interested in the case where $M$ is sparse and random, as specified below.

 Let
$X_1, X_2, \ldots, X_m$ denote the vectors constituting the
rows of $M$, and let $\sigma(n,m)$
denote the co-rank over GF$[2]$, namely
\begin{equation}
\label{sigmadef}
\sigma(n,m):=m - \dim \spa \{X_1, X_2, \ldots, X_m\},
\end{equation}
where here and subsequently `$\spa$' indicates the linear span over GF[2].
Then the number of null vectors of $M$, including the zero vector, is
\begin{equation}
\label{Ndef}
\NN (n,m) = 2^{\sigma(n,m)},
\end{equation}
which counts the number of distinct solutions
in $ \{0,1\}^m$,
including the zero solution, to
\begin{equation}
\label{leftnull}
a_1 X_1 + \cdots  + a_m X_m \equiv \0 ~(\mod 2).
\end{equation}
Note that for a fixed $n$ and a given realization of the sequence of rows $X_1, X_2, \ldots$, the numbers $\NN(n,m)$ are
nondecreasing as $m$ increases.

Suppose that $n, m \to \infty$, with $m/n \to \alpha > 0$.
Our goal is to examine the limiting behaviour of the expected
 number $\Exp [\NN (n,m)]$ of
left null vectors, and the limiting probability
$\Pr [\sigma(n,m) > 0]$ of a mod-$2$ linear dependency of the rows of
$M(n,m)$,   as a function of the parameter $\alpha$,
and especially to derive computable thresholds
at which phase transitions occur. We also
study the rate of exponential decay of the
probability that $\1 := (1,1,\ldots,1)$ is
a null vector.

The probabilistic setting that we consider has the rows $X_1, X_2, \ldots, X_m$
being independent and identically distributed (i.i.d.) with the
law of a random vector $X = X(n) \in \{0,1\}^n$.  The problem
 has   different flavours depending on the underlying law of  $X$,
and several regimes
have received considerable attention in the literature, including:
\begin{itemize}
\item[(a)] The {\em dense} regime in which $X$ has   order $n$ non-zero
components; the standard  model studied in this regime
has $X$ distributed uniformly over $\{0,1\}^n$.
\item[(b)] The classical {\em sparse} regime in which $X$ has
order $\log n$ non-zero components.
\item[(c)] The uniformly (very) sparse regime in which $X$  has $O(1)$
  non-zero components.
\end{itemize}

The main focus of the present paper is   regime (c) (albeit our `$O(1)$' may be random for each row, and might not even have a mean); in Section
\ref{sec:other} below we briefly discuss
other models that have been studied.
 In the simplest case, $X$ contains a fixed
number $r \leq n$ of non-zero components (the cases $r=1$ and $r=2$
have distinct behaviour from the case $r \geq 3$);
in more generality the number of non-zero
components is randomly distributed according to a given {\em weight distribution}.

Before formally describing our model in detail
 and presenting our main results
(in Section \ref{sec:results}),
we make some remarks on motivation, and on the literature.
Note that $\sigma(n,m)=0$
if and only if $M$ has row rank $m$, which occurs if and
only  if $M$ has column rank $m$. Thus the absence   of
non-trivial left null vectors is equivalent to all column vectors
in $\{0,1\}^m$ being expressible as a linear combination
of the columns of $M$ (with addition modulo 2),
 or in other words, to there being
a solution $x \in \{0,1\}^n$ to $Mx \equiv y$ for all
column vectors $y \in \{0,1\}^m$.
In the special case of $r=2$, motivation for considering
this question is discussed at the start of
\cite[Chapter 3]{kolchinbook}. The following interpretations
help to motivate the general case. \\

{\em A scheduling
problem.}
Suppose that a tennis club is organizing its annual schedule. There
 are $n$ playing days, and $m$ potential players.
Each player wants to play on a given subset of the days; if there is not a match available on every one of these days,
they refuse to pay the annual membership. Each day, in order for nobody
 to be left out, an even number of players is required.
Each possible schedule satisfying these requirements is a left null vector mod $2$; the one with the most units achieves the maximal income
for the tennis club. \\

{\em Randomized Lights Out.}
This is a variant of the game `Lights Out' \cite{muetze}.
Each of $m$ lamps can be either on or off,
and there are $n$ switches, each of which is incident
to a specified subset of the lamps, as given by the
random matrix $M$;
Lamp $i$ and Switch $j$   are mutually incident
 if and only if the entry at $(i,j)$ of $M$ is 1.
If a switch is toggled, all of the lamps incident
to it have their status changed from on to off
or off to on. One may ask whether all states
(i.e.\ configurations of on and off lamps) are
accessible from the `all off' state by
using some sequence of switches (or equivalently,
if the `all off' state is accessible from
all possible starting states), and this
is equivalent to the question of whether the
column rank of $M$ is $m$. \\

{\em The XORSAT problem.} This is a variant
of the random satisfiability problem \cite{MPZ}, where
there are $n$ Boolean variables which may be
deemed true or false.
Each row of $M$ represents a clause built as
the logical XOR (exclusive OR) involving those
Boolean variables corresponding
to columns incident to this row, so  the clause
is  true if an odd number of the variables incident
to the row are
deemed true.  Given a vector $y \in \{0,1\}^m$, finding
a solution $x$ to $Mx \equiv y$ corresponds to
finding a truth-assignment for the Boolean
variables so that each clause $i$ is true
if $y_i=1$ and false if $y_i=0$.
Thus the column rank is $m$ if and only if the problem
is satisfiable  for all possible choices of $y$. \\

 {\em A spin-glass model.}
The relationship  between satisfiability
problems and spin glasses has already been noted
in \cite{MPZ}. In the present instance,
consider the following
variant of the well-known Sherington--Kirkpatrick
mean-field spin-glass model (see e.g.\ \cite{talagrand}).
  There is a random collection of hyperedges on $n$ vertices,
represented by the $m$ rows of $M$. Each
hyperedge $i$   has a  sign $g_i$, taking value
$(-1)^{y_i}$.
Each vertex $j$ is assigned a spin $\sigma_j$
taking values in   $\{-1, 1\}$,
and (at zero temperature) the
probability measure on the state-space is
concentrated on states of minimal energy,
i.e.\ with maximal value of  $\sum g_i e_i$,
where here $e_i$ denotes the product of spins
at vertices in hyperedge $i$. The existence of
a configuration with
all terms in the sum equal to  $+1$
is equivalent to the existence of a solution
to $Mx \equiv y$. \\

 {\em The Ehrenfest urn model and the random walk on the hypercube.}
In the Ehrenfest  model of heat exchange, a box contains $n$ particles, some
of which are red and the rest blue. At each step, a particle
is sampled uniformly from the box and changes its colour.
For a sample of the large literature, see
 e.g.\ \cite[p.\ 121]{feller1}, \cite[\S 3.5]{mahmoud}, \cite[\S 3.5]{moran}, or \cite{jk}.
In the case where $X$ has a single
unit entry, we may view each row of $M$ as selecting which particle
is to be changed  at that step.
Then $\1$ is a null  vector for $M$ if and only if
the model returns
to the initial state after $m$ steps.
This may also be interpreted
as a random walk on the
 graph whose vertices are $\{0,1\}^n$
and edges are present between those vertices that differ
in a single component; the event that $\1$ is null
corresponds to the walker being back
in his starting state
after $m$ steps.

The general case, allowing other weight distributions, corresponds to a generalization of the
Ehrenfest model whereby multiple `diffusions'
are allowed, i.e.\  at each step several
 particles may  change colour at once;
 cf \cite[Chapter 10]{mahmoud}.
This can be similarly
interpreted in terms of a walk on a version of the hypercube
with additional edges. \\

 There is a large body of work on
the properties of random matrices over finite fields
and the closely related subject of
random linear equations over finite fields. Surveys are provided
by the book
\cite[Chapter 3]{kolchinbook} as well as the
articles \cite{kl,levitskaya}.
The problems may also be formulated in terms of {\em random hypergraphs}:
each row represents a hyperedge, and each column represents
a vertex; for details see
Section \ref{sec:cores} below.
They
are also related to the {\em XORSAT} problem
in Boolean algebra,
as mentioned above (see also
 Section \ref{between} below).
Generally, such models can be described in the framework of
{\em random allocation} or {\em occupancy} problems:
see the books \cite{kolchinbook,mahmoud,jk,ksc}.

The null-vector problem in the fixed row-weight case   has received several treatments in the literature, and it is not easy to reconcile all of the existing results,
due to differences in presentation and also differences in the underlying probabilistic models.
One contribution of the present paper is to clarify some of these issues, including giving a rigorous justification
that the results are unchanged under small perturbations of the underlying model. Our main contribution, however, is to treat the case of
genuinely {\em random} row weights, which has not previously been studied. We mention that there has recently been renewed interest in
this area in several scientific communities: for example, Alamino and Saad \cite{as} give a statistical physics approach to the null-vector problem;
Ibrahimi {\em et al.} \cite{ikkm} treat the related problem of random XORSAT; Costello and Vu \cite{cv} study the rank of random symmetric (so in particular, square) matrices.

Throughout the paper, we extend the function $x \mapsto x^x$, $x>0$,
continuously to $x=0$, so that $0^0 :=1$.
  We define the {\em weight} of a vector
$v = ( v_1,\ldots,v_n) \in \{0,1\}^n$
to be $w(v) := \sum_{i=1}^n v_i$, i.e., the number of unit entries.
For $n \in \N := \{1,2,\ldots\}$ we
  write $[n]:= \{1,2,\ldots,n\}$.
We write $\tod$ for convergence in distribution.

\section{Results and discussion}
\label{sec:results}

\subsection{Description of the random matrix model}
\label{sec:matrixlaw}

Given $n \in \N $, suppose
 that $X = X(n) \in \{0,1\}^n $ is a random row vector,
selected according to some
 probability law on $\{0,1\}^n$.
Consider a sequence of i.i.d.\ random vectors
$X_1, X_2, \ldots$ with the same law as $X$.  Let $M := M (n,m)$ be the
 $m \times n$ matrix whose rows are $X_1, X_2, \ldots, X_m$.

We will consider $X$ with law of the following form.
Let $W$  be an $\N$-valued random variable (so $\Pr [ W \geq 1 ] =1$)
whose law will be the (limiting) {\em weight distribution}
of our random vector $X$.
Let $W_1, W_2, \ldots$ be a sequence of random variables
with $W_n \in [n]$ such that $W_n \tod W$ as $n \to \infty$.
Let $w(X)$ have the distribution of $W_n$, and for each
$k \in [n]$ let
the conditional distribution of $X$, given
  $w(X)= k$, be uniform over $\{x \in \{0,1\}^n: w(x) =k\}$.

Let $\rho (s) := \Exp [ s^W]$
and $\rho_n (s) := \Exp [ s^{W_n}]$
 denote the probability generating
functions of $W$ and $W_n$, respectively.
 We use $\Prn$ and $\Expn$ for the
probability and expectation for the random matrix
model with $n$ columns and row weight distribution given by $\rho_n$. We shall say $W_n$ are
{\em uniformly bounded} if
there is a finite constant $r_1$ such that
$\Pr[W_n \leq r_1] =1$ for all $n$
(and hence $\Pr[W \leq r_1] =1$ as well).

\subsection{Threshold results in the general setting}
\label{sec:general}

Given the probability generating function $\rho$,
define the threshold
\begin{equation}
\label{alphastar2}
\alpha _{\rho}^* := \inf  \{ \alpha \geq 0: F_\rho (\alpha ) > 0 \},
\end{equation}
where we set
\begin{equation}
\label{Fdef2}
 F_\rho (\alpha)  :=
\log \sup_{\gamma \in [0,1/2]} \left( \frac{ (1 +   \rho (1-2\gamma)  )^\alpha }{2 \gamma^\gamma (1-\gamma)^{1-\gamma} } \right),
~~~ \alpha \geq 0
.
\end{equation}

We state some fundamental properties of $F_\rho(\alpha)$ and $\alpha^*_\rho$ in the next result, which we prove in Section \ref{secprelim}.

\begin{proposition}
\label{alphaprop}
We have
 $F_\rho(\alpha) = 0$ for $ 0 \leq \alpha \leq \alpha^*_\rho$
but $F_\rho(\alpha) >0$ for $\alpha > \alpha^*_\rho$,
and $F_\rho$ is continuous and nondecreasing as a function of $\alpha$.
Moreover:
\begin{itemize}
\item[(i)] $\alpha^*_\rho \in [0,1]$; and $\alpha^*_\rho < 1$ if $\Exp[W]<\infty$.
\item[(ii)] $\alpha^*_\rho = 0$ if and only if $\Pr [ W = 1] >0$.
\item[(iii)] If $\Pr [W=2] =1$, then $\alpha^*_\rho = 1/2$.
\item[(iv)] Suppose that $\tilde W$ is another $\N$-valued random variable, with $\tilde \rho (s) = \Exp [ s^{\tilde W} ]$,
such that $\tilde \rho(s) \leq \rho(s)$ for all $s \in [0,1]$ (which is the case, for example, if $\tilde W$ stochastically
dominates $W$). Then $\alpha^*_{\tilde \rho} \geq \alpha^*_{\rho}$.
\end{itemize}
In particular, if $\Pr [ W \geq 2] =1$ and $\Exp[W]<\infty$, then $\alpha^*_\rho \in [1/2,1)$.
\end{proposition}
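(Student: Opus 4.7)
The plan is to derive the structural properties of $F_\rho$ (monotonicity, continuity, the threshold characterisation) first, and then verify (i)--(iv) and the concluding statement. Throughout, write $g_\gamma(\alpha) := (1+\rho(1-2\gamma))^\alpha/\bigl(2\gamma^\gamma(1-\gamma)^{1-\gamma}\bigr)$ for the expression inside the supremum in \eqref{Fdef2}, extended at $\gamma = 0$ via $0^0 := 1$. Since $W \in \N$ forces $\rho(0) = 0$, one has $g_{1/2}(\alpha) = 1$ for every $\alpha \geq 0$, so $F_\rho(\alpha) \geq 0$; and a direct calculation (using that $\gamma^\gamma(1-\gamma)^{1-\gamma}$ is minimised at $\gamma = 1/2$) shows $F_\rho(0) = 0$. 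For each fixed $\gamma \in [0,1/2)$, $\log g_\gamma(\alpha)$ is linear in $\alpha$ with positive slope $\log(1+\rho(1-2\gamma)) > 0$, so $F_\rho$ is nondecreasing in $\alpha$. Joint continuity of $g$ on $[0,1/2]\times [0,\infty)$, together with compactness of the $\gamma$-range, yields continuity of $F_\rho$ via the standard subsequential-maximiser argument. The characterisation $F_\rho = 0$ on $[0,\alpha^*_\rho]$ and $F_\rho > 0$ on $(\alpha^*_\rho, \infty)$ then follows immediately from the definition of the infimum combined with monotonicity and continuity.

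For (i), $\alpha^*_\rho \leq 1$ since $g_0(\alpha) = 2^{\alpha-1} > 1$ when $\alpha > 1$. To obtain $\alpha^*_\rho < 1$ when $\Exp[W] < \infty$, I would compute
\[
\frac{d}{d\gamma}\log g_\gamma(1) = -\frac{2\rho'(1-2\gamma)}{1+\rho(1-2\gamma)} + \log\frac{1-\gamma}{\gamma}
\]
and observe that the first term stays bounded (since $\rho'(1) = \Exp[W] < \infty$) while the second diverges to $+\infty$ as $\gamma \downarrow 0$, so $\log g_\gamma(1) > 0$ for small $\gamma > 0$. Hence $F_\rho(1) > 0$, and by continuity $F_\rho(1 - \delta) > 0$ for some $\delta > 0$. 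Part (iv) is immediate: $g_\gamma(\alpha)$ is monotone increasing in $\rho(1-2\gamma)$, so $\tilde\rho \leq \rho$ on $[0,1]$ gives $F_{\tilde\rho}(\alpha) \leq F_\rho(\alpha)$ pointwise and hence $\alpha^*_{\tilde\rho} \geq \alpha^*_\rho$; the stochastic-domination parenthetical follows because $w \mapsto s^w$ is nonincreasing on $\N$ for $s \in [0,1]$.

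For (iii), specialising to $\rho(s) = s^2$ and substituting $\gamma = (1-\delta)/2$ ($\delta \in [0,1]$) gives the clean form $g_\gamma(\alpha) = (1+\delta^2)^\alpha / \bigl((1-\delta)^{1-\delta}(1+\delta)^{1+\delta}\bigr)^{1/2}$. The key ingredient is the scalar inequality
\[
(1-\delta)^{1-\delta}(1+\delta)^{1+\delta} \geq 1 + \delta^2, \qquad \delta \in [0,1],
\]
which I would prove by letting $\phi(\delta) := (1-\delta)\log(1-\delta) + (1+\delta)\log(1+\delta) - \log(1+\delta^2)$, checking $\phi(0) = \phi'(0) = 0$ and $\phi''(\delta) = 8\delta^2/\bigl((1-\delta^2)(1+\delta^2)^2\bigr) \geq 0$ on $[0,1)$, and extending to $\delta = 1$ by continuity. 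This yields $g_\gamma(1/2) \leq 1$, and since $(1+\delta^2)^\alpha$ is nondecreasing in $\alpha$ we get $g_\gamma(\alpha) \leq 1$ for all $\alpha \leq 1/2$, so $F_\rho(\alpha) = 0$ and $\alpha^*_\rho \geq 1/2$. For $\alpha > 1/2$, a Taylor expansion of $(1+\delta^2)^{2\alpha}/\bigl((1-\delta)^{1-\delta}(1+\delta)^{1+\delta}\bigr) = 1 + (2\alpha-1)\delta^2 + O(\delta^4)$ about $\delta = 0$ shows $g_\gamma(\alpha) > 1$ for small $\delta > 0$, so $\alpha^*_\rho \leq 1/2$. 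For (ii), the $\Leftarrow$ direction comes from expanding $\log g_{1/2-\eps}(\alpha) = 2\alpha\eps\Pr[W=1] + O(\eps^2)$ near $\eps = 0$, using $\rho(2\eps) = 2\eps\Pr[W=1] + O(\eps^2)$ and that the entropy-like denominator contributes only $O(\eps^2)$; this is positive for small $\eps > 0$ and any $\alpha > 0$. The $\Rightarrow$ direction follows from (iv) applied with the pgf $s \mapsto s^2$ in the role of the dominating pgf (legitimate since $\Pr[W \geq 2] = 1$ implies $\rho(s) \leq s^2$) together with (iii), giving $\alpha^*_\rho \geq 1/2 > 0$. The final statement $\alpha^*_\rho \in [1/2, 1)$ then combines this bound with $\alpha^*_\rho < 1$ from (i). The most delicate single step is the scalar inequality in (iii); once that $\phi''$-verification is in hand, the remainder is a routine chain of reductions and Taylor expansions.
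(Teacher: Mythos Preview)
Your proof is correct and, in parts (iii) and (ii), takes a genuinely different route from the paper.

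For (iii), the paper proceeds by locating the stationary points of $\gamma \mapsto F_{\rho,\alpha}(\gamma)$: it writes the stationarity condition as $\alpha = s(\gamma)$ for an explicit function $s$, then shows $s$ is strictly decreasing on $[0,1/2)$ with minimum $1/2$ via a term-by-term power-series comparison $\frac{z}{(1-z^2)^2} > \frac{1}{2}\log\frac{1+z}{1-z}$. Your substitution $\gamma = (1-\delta)/2$ and the convexity inequality $(1-\delta)^{1-\delta}(1+\delta)^{1+\delta} \geq 1+\delta^2$ is a cleaner packaging: the single second-derivative check $\phi''(\delta) = 8\delta^2/\bigl((1-\delta^2)(1+\delta^2)^2\bigr) \geq 0$ dispatches the whole lower bound $\alpha^*_\rho \geq 1/2$ at once, without needing to analyse where the supremum is attained. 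Both proofs finish the upper bound by the same Taylor expansion near $\gamma = 1/2$ (equivalently $\delta = 0$).

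For the $\Rightarrow$ direction of (ii), the paper argues directly: when $\Pr[W=1]=0$ one has $F_{\rho,\alpha}(1/2) = F'_{\rho,\alpha}(1/2) = 0$ and $F''_{\rho,\alpha}(1/2) = 4\alpha\rho''(0) - 4 < 0$ for small $\alpha$, then combines a local Taylor bound near $\gamma = 1/2$ with a Dini-theorem argument away from $1/2$. Your route---observe $\Pr[W\geq 2]=1 \Rightarrow \rho(s) \leq s^2$, apply (iv) with $s^2$ as the larger pgf, and invoke (iii)---is a neat reduction that avoids this analysis entirely and simultaneously yields the concluding $[1/2,1)$ statement. The paper's direct argument has the minor advantage of not relying on (iii), but yours is shorter. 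One phrasing nit: in (iv) the \emph{stochastically dominating} variable has the \emph{smaller} pgf, so calling $s^2$ the ``dominating pgf'' is potentially confusing; it would be clearer to say $s^2$ plays the role of $\rho$ (the larger pgf) and your original $\rho$ plays the role of $\tilde\rho$.

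The remaining pieces (monotonicity, continuity via compactness of the $\gamma$-range, part (i), the $\Leftarrow$ direction of (ii), and (iv)) match the paper's arguments in substance; your continuity argument uses a maximiser-subsequence extraction where the paper invokes Dini, but both are standard.
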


Here is our first main result,
 describing the threshold behaviour of the expected number
of null vectors. We shall prove this in Section \ref{mainproofs}.

\begin{theorem}
\label{thm1}
Suppose that $m_n/ n \to \alpha \in (0,\infty)$
  as $n \to \infty$.  Then
\begin{equation}
\label{thm1eq}
 \lim_{n \to \infty }  n^{-1}\log
\Expn [ \NN (n, m_n ) ] =
F_\rho (\alpha )
;\end{equation}
in particular, the limit in (\ref{thm1eq}) is strictly positive for $\alpha > \alpha_\rho^*$ and zero for $\alpha \leq \alpha^*_\rho$.
Moreover, if in addition
 there exist $r_0 \geq 3$ and $r_1 < \infty$ such
 that $\Pr  [ r_0 \leq W_n \leq r_1 ] =1$
for all $n$, and
 $\alpha \in (0, \alpha_\rho^*)$,
then as $n \to \infty$,
\begin{equation}
\label{polybound}
\Expn [ \NN (n, m_n ) ] = 1 + O ( n^{2-r_0} ).
\end{equation}
 \end{theorem}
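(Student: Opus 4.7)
The centerpiece of the proof is a Fourier identity on $\GF^n$. By row exchangeability, $\Prn[aM\equiv\0]$ depends only on $k=w(a)$, equalling $p_n(k):=\Prn[X_1+\cdots+X_k\equiv\0]$; inverting $\1_{\{\0\}}$ via the characters $v\mapsto(-1)^{c\cdot v}$ gives
\begin{equation*}
p_n(k) \;=\; 2^{-n}\sum_{c\in\{0,1\}^n}\phi_n(w(c))^k,\qquad \phi_n(j) \;:=\; \Exp\!\left[\frac{K_{W_n}(j;n)}{\binom{n}{W_n}}\right],
\end{equation*}
where $K_r(j;n)=\sum_\ell(-1)^\ell\binom{j}{\ell}\binom{n-j}{r-\ell}$ is the Krawtchouk polynomial; equivalently, conditional on $W_n=r$, $\phi_n(j)=\Exp[(-1)^L]$ with $L$ hypergeometric (the overlap of a uniform $r$-subset with a fixed $j$-subset of $[n]$). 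Substituting into $\Expn[\NN(n,m_n)]=\sum_k\binom{m_n}{k}p_n(k)$ and applying the binomial theorem yields the master identity
\begin{equation}\label{plan:master}
\Expn[\NN(n,m_n)] \;=\; 2^{-n}\sum_{j=0}^n\binom{n}{j}\bigl(1+\phi_n(j)\bigr)^{m_n}.
\end{equation}

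Claim \eqref{thm1eq} follows from a Laplace argument on \eqref{plan:master}. With $j=\lfloor\gamma n\rfloor$, Stirling gives $\binom{n}{j}=\exp(nH(\gamma)+O(\log n))$ where $H(\gamma)=-\log[\gamma^\gamma(1-\gamma)^{1-\gamma}]$, while the hypergeometric-to-binomial limit $K_r(\gamma n;n)/\binom{n}{r}\to(1-2\gamma)^r$ (fixed $r$) combined with $W_n\tod W$ and the uniform bound $|\phi_n|\le 1$ give $\phi_n(\lfloor\gamma n\rfloor)\to\rho(1-2\gamma)$. Standard upper/lower Laplace estimates then produce
\begin{equation*}
\lim_{n\to\infty}n^{-1}\log\Expn[\NN(n,m_n)] \;=\; \sup_{\gamma\in[0,1]}\log\!\left(\frac{(1+\rho(1-2\gamma))^\alpha}{2\gamma^\gamma(1-\gamma)^{1-\gamma}}\right).
\end{equation*}
The symmetry $\gamma\leftrightarrow 1-\gamma$ of the denominator, combined with $|\rho(-s)|\le\rho(s)$ for $s\in[0,1]$, restricts the supremum to $[0,1/2]$, giving $F_\rho(\alpha)$. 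For $\alpha\le\alpha_\rho^*$ (where $F_\rho(\alpha)=0$) the trivial bound $\NN\ge 1$ supplies the matching lower bound.

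For \eqref{polybound} under $r_0\le W_n\le r_1$, $r_0\ge 3$, and $\alpha<\alpha_\rho^*$, I split $\Expn[\NN]-1=\sum_{k\ge 1}\binom{m_n}{k}p_n(k)$. Since $w(X)\ge r_0\ge 1$, the $k=1$ term vanishes. The $k=2$ term equals $\binom{m_n}{2}\sum_r\Pr[W_n=r]^2/\binom{n}{r}$, which is of order $n^2\cdot n^{-r_0}=n^{2-r_0}$ because $\binom{n}{r}\gtrsim n^{r_0}$ uniformly for $r\in[r_0,r_1]$. For the tail $k\ge 3$, substitute \eqref{plan:master} minus its first three binomial-expansion terms, so that each $j$-summand carries the factor $(1+\phi_n(j))^{m_n}-1-m_n\phi_n(j)-\binom{m_n}{2}\phi_n(j)^2$, bounded by $O(m_n^3|\phi_n(j)|^3)$ where $m_n|\phi_n(j)|=O(1)$ and by $(1+|\phi_n(j)|)^{m_n}$ otherwise. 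The condition $\alpha<\alpha_\rho^*$ keeps the Laplace supremum in \eqref{plan:master} at $1$, so the tail is exponentially smaller than $n^{2-r_0}$.

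The main technical obstacle is the nonconstant sequence $W_n$: Laplace's method demands uniform control of $\phi_n(\lfloor\gamma n\rfloor)\to\rho(1-2\gamma)$, which I would achieve by truncating $W_n$ at a large constant (the crude bound $|K_r/\binom{n}{r}|\le 1$ absorbs the truncation error) and then invoking the monotonicity in Proposition~\ref{alphaprop}(iv) to transfer rates between truncated and untruncated versions in the limit. A secondary subtlety concerns the region $\gamma\approx 1/2$, where $\phi_n\to 0$ and the coarse equivalence $\log(1+\phi_n)\approx \phi_n$ drives the fine scale in \eqref{polybound}; the analysis of the $k=2$ piece confirms that the leading correction has the claimed order.
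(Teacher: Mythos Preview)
Your approach is correct for \eqref{thm1eq} and genuinely different from the paper's. The closed-form master identity \eqref{plan:master}, obtained by summing out $k$ via the binomial theorem \emph{before} applying Laplace, is more direct than the paper's two-step route: the paper keeps the double sum over $(\ell,j)$, works first in the binomial model where $\phi_n(j)=\rho(1-2j/n)$ exactly (Lemma~\ref{binthm}), and then transfers to the general model via a coupling of $W_n$ with $W_n^{\rm bin}$ (Lemma~\ref{approxlem2}). Your plan, using uniform convergence of $\phi_n(\lfloor\gamma n\rfloor)$ to $\rho(1-2\gamma)$ via tightness of $(W_n)$ and the bound $|K_r/\binom{n}{r}|\le 1$, reaches the same limit in one shot.

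For \eqref{polybound}, however, your sketch has a real gap. The dichotomy ``$O(m_n^3|\phi_n(j)|^3)$ where $m_n|\phi_n(j)|=O(1)$, otherwise $(1+|\phi_n(j)|)^{m_n}$'' does not control the intermediate range where $|\phi_n(j)|\to 0$ but $m_n|\phi_n(j)|\to\infty$ (for instance $|j-n/2|\sim n^{3/4}$). There the Laplace exponent $F_{\rho,\alpha}(\gamma)$ is close to $0$, not strictly negative, so the crude bound $(1+|\phi_n(j)|)^{m_n}$ only yields $e^{o(n)}$ and certainly not something ``exponentially smaller than $n^{2-r_0}$''. A correct Fourier route would use the uniform remainder bound $|R_j|\le\frac{(m_n|\phi_n(j)|)^3}{6}\,e^{m_n|\phi_n(j)|}$ together with a pointwise Krawtchouk estimate such as $|\phi_n(j)|\le C_{r_1}\bigl(|1-2j/n|^{r_0}+n^{-\lfloor r_0/2\rfloor}\bigr)$ under the bounded-weight hypothesis; this produces a tail of order $n^{3-3r_0/2}=o(n^{2-r_0})$, which is polynomially (not exponentially) smaller than the $k=2$ term. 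The paper sidesteps these Krawtchouk subtleties entirely: it bounds $\Prn[A(n,\ell)]$ combinatorially for small $\ell$ via the column range of the $\ell\times n$ submatrix (Lemma~\ref{lem3}), obtaining $\sum_{2\le\ell\le\delta n}\binom{m_n}{\ell}\Prn[A(n,\ell)]=O(n^{2-r_0})$ directly, and only then invokes a Laplace argument (away from $\gamma=1/2$, where the exponent is strictly negative) for $\ell>\delta n$.
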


 \begin{rmk}
 For $\alpha < \alpha^*_\rho$, the  expectation in (\ref{polybound})
 is dominated by low-weight null vectors (short hypercycles), in particular by
  null vectors with only 2 non-zeros. It may be possible, by extending
the argument of Lemma \ref{lem3} below, to expand this expectation
as a power series in $n^{-1}$, but we do not pursue this here.
 For $\alpha > \alpha^*_\rho$ the
  exponential  
 rate in (\ref{thm1eq})
 is dominated by null vectors using a specific positive proportion $\beta_0 = \beta_0 (\alpha)$
 of the (roughly $\alpha n$) available rows
 (and also possibly those using proportion $1-\beta_0$ of the rows, due to parity effects);
 in fact, $\beta_0 = \frac{\rho(1-2\gamma_0)}{1+\rho(1-2\gamma_0)} \in (0,1/2)$,
 where $\gamma_0 = \gamma_0 (\alpha) \in (0, 1/2)$ is the value of $\gamma$ for which the
 supremum in (\ref{Fdef2}) is attained. See Section \ref{mainproofs} for details.
 \end{rmk}

Theorem \ref{thm1}
deals with the {\em expected} number of null vectors.
Also of interest is, for a fixed $n$, the (random) number of rows $m$
at which the first non-zero null vector   appears.
Define
\begin{equation}
\label{Tdef}
 T_n :=\min \left\{m \in \N: X_m \in
\spa \left\{X_1, X_2, \ldots , X_{m-1}\right\}
\right\},
\end{equation}
the first $m$
for which $\mathrm{rank} (M(n,m)) < m$.
 Standard linear algebra implies that $T_n \leq n+1$.

We define another threshold, $\ubar \alpha_\rho$,
through an analytic description that needs more notation
(we shall give a probabilistic
interpretation later on).
For $x \in (0,1)$ set
 \begin{align}
  \label{psidef}
   \psi ( x ) & := x + \left( 1 + \frac{\rho(x)}{\rho'(x)} -     x \right) \log ( 1 - x) ; \\
   \label{hdef}
     h (x) & := - \frac{\log (1-x)}{\rho'(x) } .
 \end{align}
Provided $\Pr [W \geq 1]=1$ we can and do
  extend  $\psi$  continuously to $\psi (0) :=0$,   since
   $\rho(s)/\rho'(s) = O(s)$ as $s \downarrow 0$.
Note that 
$h(x) \to \infty$ as $x \downarrow 0$ provided
$\Pr [ W \geq 3]=1$, and that if $\Exp[W]< \infty$ then
as $x \uparrow 1$ we have
$h(x) \to \infty$  and $\psi(x) \to -\infty$.
Set
  \begin{equation}
  \label{alphasharp}
  \alpha_\rho^\sharp :=
\inf_{x \in (0,1)} h(x),
  \end{equation}
and note that 
$\alpha_\rho^\sharp \rho'(x) \leq -\log(1-x)$ for
all $x \in (0,1)$, so integrating from $0$ to $1$ 
we get $\alpha_\rho^\sharp \leq 1$, 
provided  $\Pr[W \geq 1] =1$.  
For $\alpha \geq 0$, define 
    \begin{equation}
    \label{gs0}
    g^*(\alpha) := \sup \{ x \in ( 0,1) : h (x) \leq \alpha \} ,
    \end{equation}
with the convention $\sup \emptyset =0$ in operation in (\ref{gs0}).
Observe that 
 if $h$ has unbounded range (e.g. if $\Pr[W \geq 3] =1$)
then $h \circ g^*$ is the identity map
on $[\alpha_\rho^\sharp,\infty)$.
 See Figure \ref{fig1} for an example.
Define
  \begin{equation}
\ubar \alpha_\rho := \inf \{ \alpha > \alpha_\rho^\sharp : \psi (g^*( \alpha) ) < 0\} .
\label{ubardef}
\end{equation}
 In (\ref{ubardef}), the set defining $\ubar \alpha_\rho$ is non-empty provided
 $\Pr [ W \geq 3 ]=1$ and $\Exp [W] < \infty$,
since as $\alpha \to \infty$ we have $g^*(\alpha) \to 1$ and $\psi(g^*(\alpha)) \to -\infty$.

The relevance of $\ubar \alpha_\rho$ for the null vector problem is shown by the next result.

\begin{theorem}
\label{thm3}
Suppose $W_n$ are
uniformly bounded and
  $\Pr  [ W_n \geq 3 ] =1$
for all $n$.
Then $\alpha^*_\rho \leq \ubar \alpha_\rho \leq 1$, and
 for any $\eps>0$,
\begin{equation}
\label{thm3eq1}
\lim_{n\to \infty } \Prn \left[( \alpha _{\rho }^* -\eps) n \leq   T_n \leq (
 \ubar \alpha_{\rho }  +\eps) n \right] = 1.\end{equation}
\end{theorem}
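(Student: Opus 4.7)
The statement packages three assertions: the inequality $\alpha^*_\rho \leq \ubar\alpha_\rho \leq 1$, a probability lower bound $T_n \geq (\alpha^*_\rho-\eps)n$, and a matching upper bound $T_n \leq (\ubar\alpha_\rho+\eps)n$. My plan is to prove the two bounds on $T_n$ first and then derive $\alpha^*_\rho\le\ubar\alpha_\rho$ as a corollary, with $\ubar\alpha_\rho\le 1$ reduced to a calculus check on the analytic ingredients in (\ref{alphasharp})--(\ref{ubardef}).

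\textbf{Lower bound on $T_n$.} Because the zero vector is always null, $\{T_n\le m\}=\{\NN(n,m)\ge 2\}$. Applying Markov to the nonnegative integer $\NN(n,m)-1$ gives $\Prn[T_n\le m]\le\Expn[\NN(n,m)]-1$. The hypothesis that $W_n$ is uniformly bounded with $\Pr[W_n\ge 3]=1$ puts us precisely in the regime where the polynomial refinement (\ref{polybound}) of Theorem \ref{thm1} applies with $r_0\ge 3$, so for $m_n=\lfloor(\alpha^*_\rho-\eps)n\rfloor$ the expectation is $1+O(n^{-1})\to 1$, and hence $\Prn[T_n>(\alpha^*_\rho-\eps)n]\to 1$.

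\textbf{Upper bound on $T_n$.} The strategy is to produce a non-trivial null vector via the 2-core $M^*$ of $M(n,m_n)$. A linear-algebraic lemma records that every null combination $a_1X_1+\cdots+a_mX_m\equiv\0$ must have $a_i=0$ for each row $i$ removed during peeling: if row $i$ is incident to a column of degree $1$ at the moment it is deleted, the restriction of $aM$ to that column equals $a_i$, forcing $a_i=0$, and by induction this propagates through the whole peeling order. Consequently, if the 2-core has $m^*$ rows and $n^*$ non-empty columns with $m^*>n^*$, the $m^*\times n^*$ submatrix has row-rank at most $n^*<m^*$, yielding a non-trivial null combination of its rows which extends by zeros to a null vector of $M$, whence $T_n\le m_n$. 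It therefore suffices to show that for $\alpha>\ubar\alpha_\rho$ there exists $\delta=\delta(\alpha)>0$ with $m^*-n^*\ge\delta n$ w.h.p.; this is the heart of the argument. I would run peeling as a Markov chain on the pair (multiset of column degrees, multiset of surviving row weights) and extract a fluid limit via Wormald's differential-equation method (or a time-changed configuration-model coupling). The analytic ingredients in (\ref{psidef})--(\ref{gs0}) are designed to read off from this ODE: I expect $g^*(\alpha)$ to be the asymptotic fraction of columns dissolved by peeling (with $h\circ g^*=\mathrm{id}$ encoding the correct time change between row-removal steps and the natural fluid clock), and $\psi(g^*(\alpha))$ to equal the asymptotic excess $(m^*-n^*)/n$, so the condition $\psi(g^*(\alpha))<0$ in (\ref{ubardef}) captures exactly the surplus-rows regime.

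\textbf{Conclusion and main obstacle.} If $\alpha^*_\rho>\ubar\alpha_\rho$, choose $\alpha\in(\ubar\alpha_\rho,\alpha^*_\rho)$: the upper-bound argument forces $\liminf\Expn[\NN(n,\lceil\alpha n\rceil)]\ge 2$, while (\ref{polybound}) yields $\Expn[\NN(n,\lceil\alpha n\rceil)]\to 1$, a contradiction, so $\alpha^*_\rho\le\ubar\alpha_\rho$. The bound $\ubar\alpha_\rho\le 1$ is a direct analytic check using $\alpha^\sharp_\rho\le 1$ together with the behaviour $g^*(\alpha)\uparrow 1$ and $\psi(g^*(\alpha))\to-\infty$ as $\alpha$ grows past $\alpha^\sharp_\rho$ (both noted before (\ref{gs0})). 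The main obstacle is the fluid-limit analysis underpinning the upper bound: the random row-weight distribution forces one to track an evolving two-dimensional type structure (column degrees together with surviving row weights), and identifying the right time change so that the limit ODE has a tractable solution reproducing $h$, $\psi$, and $g^*$ is the work of the proof.
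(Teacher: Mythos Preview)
Your architecture matches the paper's proof: lower bound via Markov applied to $\NN(n,m)-1$ together with (\ref{polybound}), upper bound via the 2-core having more rows than non-empty columns (the paper's Theorem~\ref{c:alphabar}, resting on the Darling--Norris fluid limit you propose to rederive), and $\alpha^*_\rho \leq \ubar\alpha_\rho$ by contradiction.

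There is, however, a genuine gap in your argument for $\ubar\alpha_\rho \leq 1$. The limits $g^*(\alpha)\uparrow 1$ and $\psi(g^*(\alpha))\to-\infty$ hold only as $\alpha\to\infty$, so together with $\alpha^\sharp_\rho\le 1$ they yield merely $\ubar\alpha_\rho<\infty$; nothing in what you wrote forces the infimum in (\ref{ubardef}) to land at or below $1$. The paper's proof (Proposition~\ref{lessthan1}) is not a purely analytic check but a probabilistic monotonicity argument: each step of the peeling algorithm removes one row and empties at least one column, so if the initial aspect ratio $m_n/(\text{non-empty columns})$ exceeds $1$ then so does the 2-core's. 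For $m_n/n\to\alpha>1$ this forces the 2-core to have surplus rows, which via the fluid limit gives $\psi(g^*(\alpha))\le 0$ for all $\alpha>1$ outside $\DD_\rho$, whence $\ubar\alpha_\rho\le 1$. A smaller point: your claim that $m^*-n^*\ge\delta n$ w.h.p.\ for \emph{every} $\alpha>\ubar\alpha_\rho$ is false in the random-weight setting (the paper's Figure~\ref{fig2} exhibits $\psi\circ g^*$ changing sign again above $\ubar\alpha_\rho$); fortunately (\ref{thm3eq1}) only requires this on some right-neighbourhood $(\ubar\alpha_\rho,\ubar\alpha_\rho+\delta)$, which does hold, and then monotonicity of $m\mapsto\{T_n\le m\}$ finishes. (Also, your sign convention slipped: $\psi(g^*(\alpha))<0$ is the surplus-rows regime, so $\psi$ is proportional to $(n^*-m^*)/n$, not $(m^*-n^*)/n$.)
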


Theorem \ref{thm3}  is proved in Section
\ref{secPoisson}.
We exclude the case where $\Pr [ W \in \{1,2\} ] >0$  from the statement of Theorem \ref{thm3}; different phenomena occur in that case
(see 
Proposition \ref{prop3} below).
This case is also discussed in
 \cite{dln}, where 
 the functions $\psi$ and $h$ also play a role.

  Typically, $\ubar \alpha_\rho$ defined by (\ref{ubardef})
 will satisfy $\psi ( g^* ( \ubar \alpha_\rho) ) =0$.
In many cases, there is a single solution in $(0,1)$, denoted $x^*_\rho$, to
$\psi(x)=0$, and $\ubar \alpha_\rho = h(x^*_\rho)$.
However,
  the situation is complicated by the fact that $\alpha \mapsto g^*(\alpha)$
 typically has at least one discontinuity.
We defer a more detailed discussion of the properties of the functions $\psi$, $h$, and $g^*$, and the corresponding thresholds, to
Section \ref{secPoisson} below. Figure \ref{fig1} provides an example.

\begin{figure}[!h]
\center
\includegraphics[width=7cm]{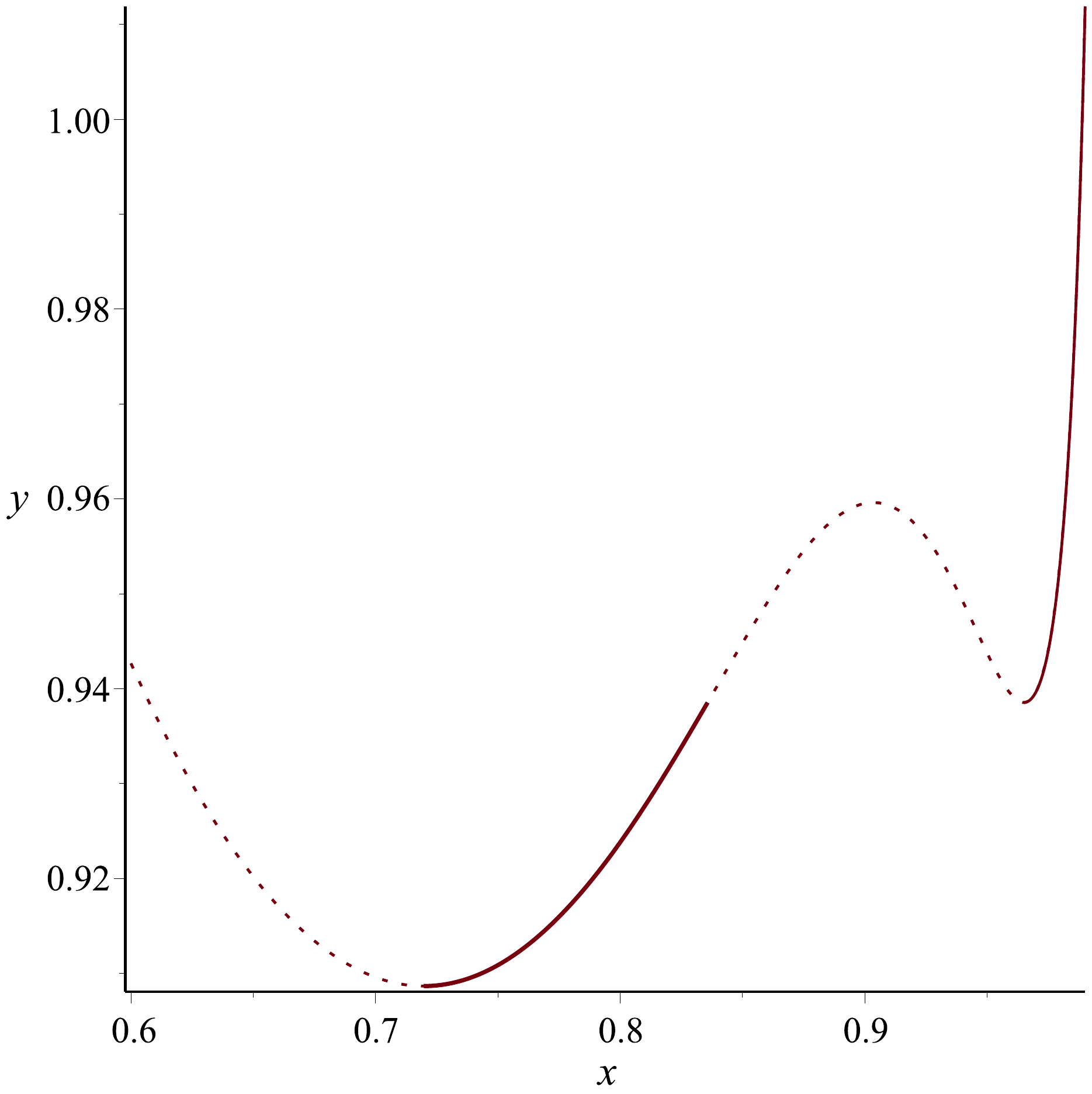} \quad
\includegraphics[width=7cm]{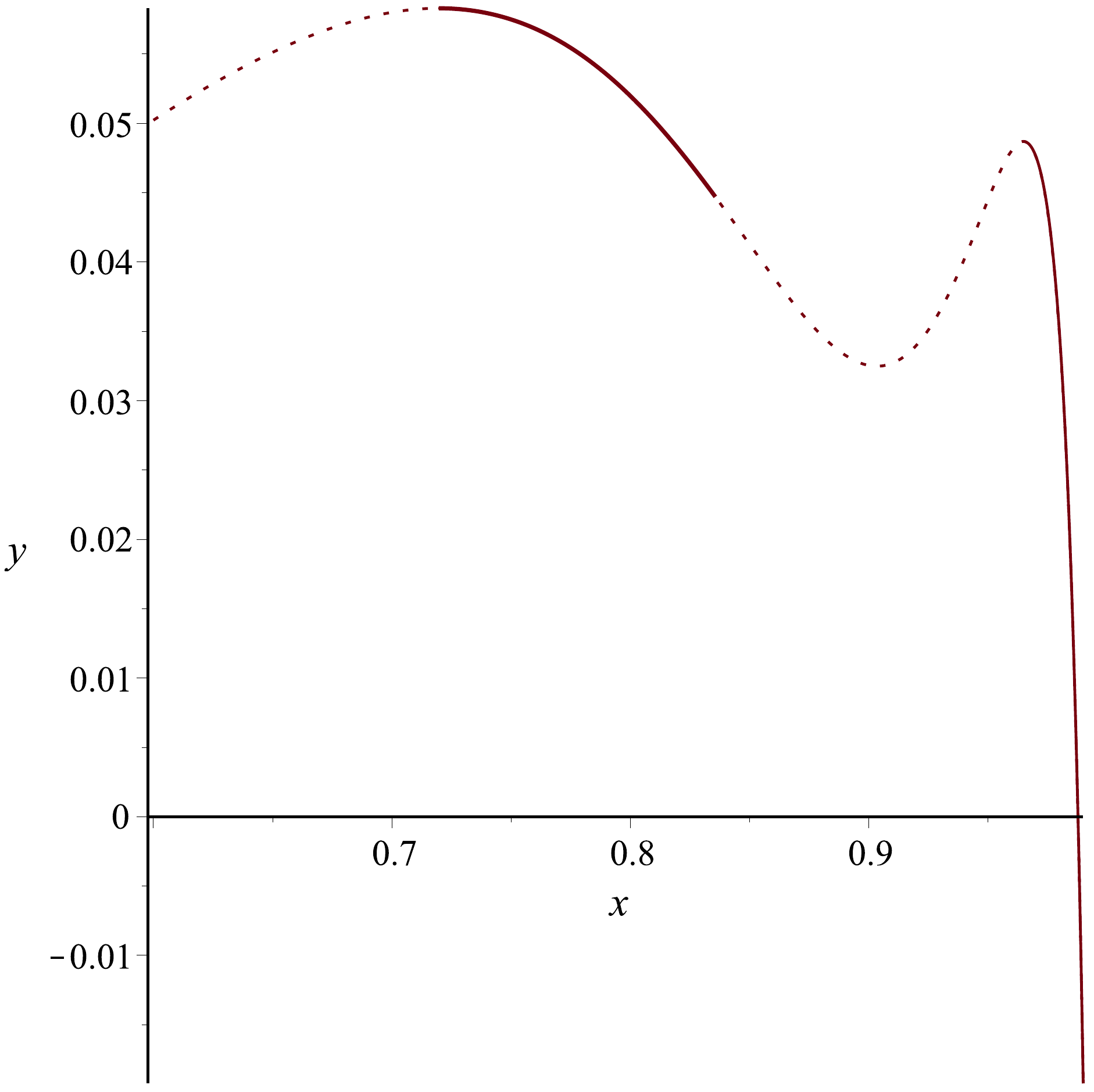}
\caption{Example with $\rho(s) = 0.9 s^3 + 0.1 s^{24}$. The left plot shows parts of the 
curves $y =h(x)$ (all the line) and $x = g^*(y)$ (solid line).
The right plot shows parts of the curves $y= \psi(x)$ (all the line) and
the locus of $(g^*(\alpha),\psi(g^*(\alpha)))$ (solid line).
The left plot shows that $g^*(\alpha)$ has two discontinuities, one at $\alpha = \alpha_\rho^\sharp \approx 0.908654$ and one at
$\alpha \approx 0.938536$, with the first corresponding to a jump from $g^* =0$ to $g^* \approx 0.719682$ and the second
to a jump from $g^* \approx 0.835696$ to $g^* \approx 0.964919$. The right plot shows the single
 positive solution  of $\psi(x)=0$  
at $x = x^*_\rho \approx 0.987817$, so $\ubar \alpha_\rho   =h(x^*_\rho) \approx 0.991613$. 
It is not a coincidence that
the curves $h$ and $\psi$ seem to mirror each other: see Lemma \ref{lem:hpsi} below.}
\label{fig1}
\end{figure}

Theorem \ref{thm3}
leaves open the sharp asymptotics of $T_n/n$: we believe that the upper
bound in Theorem \ref{thm3} (i.e., $\ubar \alpha_\rho$) is sharp:
\begin{conjecture}
\label{conj1}
If $W_n$ are uniformly bounded and
$\Pr  [  W_n \geq 3 ] =1$
for all $n$, then
$T_n/n$ converges in
probability to $\ubar \alpha_\rho$ as $n \to \infty$.
\end{conjecture}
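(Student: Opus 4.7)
The plan is to complement Theorem \ref{thm3}, which already gives $\Prn[T_n \leq (\ubar \alpha_\rho + \eps) n] \to 1$, with the matching lower bound $\Prn[T_n \geq (\ubar \alpha_\rho - \eps) n] \to 1$ for every $\eps > 0$.

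The whole argument hinges on the \emph{2-core reduction}: the support of any nonzero left null vector of $M(n,m)$ lies entirely within the rows of the 2-core of $M(n,m)$. Indeed, if $aM \equiv \0$ and $a_i = 1$ for some row $i$ that is incident to a column $j$ of degree $1$ in the sub-hypergraph supported on $\{i' : a_{i'} = 1\}$, then $(aM)_j = a_i = 1$, a contradiction; iterating shows that the support of $a$ survives the peeling algorithm. Hence $\{T_n > m\}$ is equivalent to the event that the rows of the 2-core of $M(n,m)$ are linearly independent over GF$[2]$.

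The first step is to make precise the statement that $\ubar \alpha_\rho$ is the threshold at which the 2-core first acquires more rows than non-empty columns. Concretely, for $m = \lfloor \alpha n \rfloor$ with $\alpha < \ubar \alpha_\rho - \eps$, show that with probability $1 - o(1)$ either the 2-core has $o(n)$ rows, or the numbers $m^*$ of rows and $n^*$ of non-empty columns of the 2-core satisfy $n^* - m^* \geq \delta n$ for some $\delta = \delta(\eps) > 0$. This is an analytic consequence of the definition (\ref{ubardef}): the functions $\psi$, $h$ and $g^*$ encode the fluid limit of the peeling dynamics, and the condition $\psi(g^*(\alpha)) > 0$ that characterises $\alpha < \ubar \alpha_\rho$ corresponds to the 2-core being column-dominant. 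The techniques should parallel those used for the upper-bound half of Theorem \ref{thm3}.

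The second step, and the main obstacle, is to show that conditional on a column-dominant 2-core, the 2-core has full row rank with high probability. The natural route is a conditional first-moment argument. Conditionally on its degree sequence, the 2-core is a uniform random incidence structure, which one may represent via a configuration model with row-weights in $\{3, \ldots, r_1\}$ and column-degrees at least $2$. Stratifying over the Hamming weight $k$ of a hypothetical null vector, estimate the expected number of $k$-subsets of rows whose induced sub-hypergraph has all column degrees even, and emulate in this constrained model the calculation behind Theorem \ref{thm1}. A saddle-point analysis should yield an exponential rate that is strictly negative throughout $\alpha < \ubar \alpha_\rho$ and that vanishes precisely when $\psi(g^*(\alpha)) = 0$, giving a clean conceptual match with (\ref{ubardef}). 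The difficulty is that the conditioning ``every column has degree $\geq 2$'' couples the columns and destroys the product structure that made the unconditioned first-moment calculation in Theorem \ref{thm1} tractable, so the saddle-point argument must be executed inside a configuration model, with the column-degree coupling handled carefully; summing the resulting bounds over $k$ then gives an expected number of nonzero null vectors of the 2-core that is $\exp(-\Omega(n))$, completing the proof.
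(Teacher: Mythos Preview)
This statement is presented in the paper as a \emph{conjecture}, not a theorem: the paper does not prove it. The only evidence offered is the remark that the fixed-weight case appears to follow from results in the XORSAT literature (Section~\ref{between}), where the argument of \cite{dubois,DGMMPR} goes via a \emph{second-moment} computation on the number of solutions to $Mx \equiv \omega$ for a uniformly random right-hand side $\omega$, not via a first-moment bound on null vectors. So there is no paper proof to compare against; your proposal is an attempt at an open problem.

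Your outline contains a genuine gap in the second step. You propose to show that the conditional expected number of nonzero null vectors of the 2-core is $\exp(-\Omega(n))$ for all $\alpha < \ubar\alpha_\rho$. But every left null vector of $M(n,m_n)$ is supported on the 2-core (as you correctly observe), so the number of null vectors of the core \emph{equals} $\NN(n,m_n)$. By Theorem~\ref{thm1}, $\Expn[\NN(n,m_n)]$ grows like $\exp\{n F_\rho(\alpha)\}$ with $F_\rho(\alpha) > 0$ throughout the interval $(\alpha^*_\rho, \ubar\alpha_\rho)$, which is typically nonempty (Table~\ref{tab1}, Proposition~\ref{threshasym}). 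Since the core degree sequence is concentrated (Lemma~\ref{emplem} and Theorem~\ref{t:2core}), the conditional expectation given the typical degree sequence must also be exponentially large in this range. Hence a first-moment argument on the core cannot close the gap between $\alpha^*_\rho$ and $\ubar\alpha_\rho$; this is precisely the ``substantialism'' phenomenon the paper flags in Section~\ref{between}. The known route for fixed weights avoids this obstruction by working instead with the second moment of $|\{x : Mx \equiv \omega\}|$, which is a different random variable with better concentration properties.
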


An equivalent statement to the fixed-weight case $W = r \geq 3$ of this conjecture
seems to have been established recently in the random-XORSAT literature: see the comments
in Section \ref{between}.

The probabilistic interpretation
of the thresholds
$\alpha_\rho^\sharp$ and  $\ubar \alpha_\rho$
is in terms of the
{\em 2-core} of $M(n,m)$;
this is the terminal state of an iterative algorithm
that deletes every  row incident to a column of degree 1
 (see Section \ref{sec:cores} below for details).
Let $E(n,m; \eps )$ denote the event that $M(n,m)$ possesses a $2$-core (i) whose number of rows is bounded below by  $\eps n$,
and (ii) which contains more rows
than columns of non-zero weight  (all of which have weight 2 or more).
  In particular, for $\eps>0$, $E(n,m ; \eps)$ implies that $M(n,m)$ has a
 non-empty $2$-core.
If $W_n$ are uniformly bounded,  and $m_n / n \to \alpha > 0$,
 then in Theorem \ref{c:alphabar} below we will
 show that, under certain additional conditions on $\rho$, there exists $\eps>0$ (allowed to depend on $\alpha$) such that
 $\lim_{n \to \infty} \Prn  [ E ( n, m_n ; \eps ) ] = 1$
 for   $\alpha$ in some interval of the form $(\ubar \alpha_\rho , \ubar \alpha_\rho + \delta)$ with 
$\delta >0$.

A more delicate description of the behaviour of the $2$-core
in terms of the function $\psi$ defined at (\ref{psidef})
will be given in Theorem \ref{c:alphabar} below: the limiting aspect ratio of the $2$-core being less than or greater than $1$ depends
on the sign of $\psi(g^*(\alpha))$. In the example shown in Figure \ref{fig1}, and also in the fixed weight setting,
$\psi(g^*(\alpha))$ changes sign only once, but in the the general random weight setting it may change sign {\em multiple} times; see
the example in Figure \ref{fig2} below. Thus the random weight setting gives rise to subtle new phenomena not present in the 
fixed weight case
that has been the focus of previous work.

\begin{rmk} Our techniques may be used to obtain information about the
{\em weight profile} of the left null space. Bayes' Theorem applied to equation (\ref{byweight}) below 
shows that the weight
of a randomly chosen left null vector has distribution given by
\[ \Prn [ w (v) =  k \mid v  M \equiv \0 ] = \frac{ \binom{m}{k} \Prn [ A (n, k)]}{\Expn [ \NN (n,m) ]} ;\]
 the quantities on the right-hand side here are studied in detail in Section \ref{sec:occupancy}.
\end{rmk}

\subsection{Even occupancy in random allocations}
\label{sec:allo}

Let $A (n,m)$ denote the event
that the row vector $\1 = (1,\ldots, 1)$ is  null
for $M$,
i.e.,
\begin{equation}
 A (n,m) := \{ X_1 + \cdots + X_m \equiv \0 ~ (\mod 2) \}.
\label{Adef}
\end{equation}
 One interpretation is in terms of
the {\em random allocation model}.
Suppose we have $n$ urns,
and for each row of $M$ we allocate a collection
of balls to a chosen set of  urns (determined
by the unit entries of that row of $M$).
Event $A(n,m)$ is the event that all
the urns end up with an even number of  balls.
Random allocations have been extensively studied;
see e.g.\
\cite[p.\ 101]{feller1}, and
the monographs
 \cite{kolchinbook,ksc,mahmoud,jk}.

The following theorem, which we prove
in Section \ref{sec:occupancy}, describes the exponential
rate of decay
 for $\Prn [A(n,m_n)]$ where $m_n/n$ has a finite positive limit.
The theorem excludes the case in which {\em both} $W$  and $m_n$ only take odd
values; 
 if $m$ is odd and $W_n$ is odd a.s., then
$\Prn  [ A(n,m ) ] =0$ since the total number of units in the matrix is odd.

\begin{theorem}
\label{allothm}
Suppose that
$m_n/n \to \alpha \in (0,\infty)$ as $n \to \infty$,
and that either  (i) $m_n \in 2 \Z$ for all $n$; or (ii)
$\Pr [ W  \in 2\Z ] >0$.
Then
\begin{align}
\label{allolim}
\lim_{n\to \infty}
  n^{-1} \log \Prn  [ A(n,m_n) ]
 = - R_\rho (\alpha) ,\end{align}
where $R_\rho(\alpha) > 0$ is continuous and nondecreasing in $\alpha >0$ and is defined by
\begin{align}
\label{Rrhodef}
R_\rho (\alpha) :=
- \log
\sup_{\gamma \in [0,1/2]}
\left(
\frac{ \left( \rho (1 - 2\gamma) \right)^\alpha}
{2 \gamma^\gamma (1-\gamma)^{1-\gamma}}
\right).
\end{align}
\end{theorem}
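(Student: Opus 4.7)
The approach is Fourier inversion over $\GF^n$ followed by Laplace's method. Using the identity $\1\{S = \0\} = 2^{-n} \sum_{\gamma \in \{0,1\}^n} (-1)^{\gamma \cdot S}$ with $S := X_1 + \cdots + X_{m_n}$, and exploiting the i.i.d.\ property of the rows,
\begin{equation*}
\Prn [A (n,m_n)] = 2^{-n} \sum_{\gamma \in \{0,1\}^n} \phi_n(\gamma)^{m_n}, \qquad \phi_n(\gamma) := \Expn [(-1)^{\gamma \cdot X}].
\end{equation*}
Conditional on $w(X) = j$, the vector $X$ is uniform on weight-$j$ vectors, so $\gamma \cdot X$ is hypergeometric with parameters $(n, w(\gamma), j)$; this shows $\phi_n(\gamma)$ depends only on $k := w(\gamma)$, and grouping by weight reduces the problem to the one-dimensional sum
\begin{equation*}
\Prn [A (n,m_n)] = 2^{-n} \sum_{k=0}^n \binom{n}{k} \phi_{n,k}^{m_n} ,
\tag{$\star$}
\end{equation*}
where $\phi_{n,k}$ is the common value of $\phi_n$ on weight-$k$ vectors.

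For the exponential rate in $(\star)$, Stirling gives $n^{-1} \log (2^{-n} \binom{n}{k}) \to H(\gamma) - \log 2$ whenever $k/n \to \gamma$, with $H(\gamma) := - \gamma \log \gamma - (1-\gamma) \log(1-\gamma)$; and the hypergeometric-to-binomial limit combined with bounded convergence (using $|\phi_{n,k}| \leq 1$ and $W_n \tod W$) gives $\phi_{n,k_n} \to \Exp [(1-2\gamma)^W] = \rho(1-2\gamma)$. Thus the integrand has pointwise rate
\begin{equation*}
\Lambda(\gamma) := H(\gamma) - \log 2 + \alpha \log |\rho(1-2\gamma)| .
\end{equation*}
The symmetry $H(\gamma) = H(1-\gamma)$ and the inequality $|\rho(-s)| \leq \rho(s)$ for $s \in [0,1]$ (valid because $W \geq 1$, since $|\Exp[(-s)^W]| \leq \Exp[s^W]$) together give $\sup_{[0,1]} \Lambda = \sup_{[0,1/2]} \Lambda$, and the latter equals $-R_\rho(\alpha)$ with $R_\rho(\alpha)$ as in \eqref{Rrhodef}. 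Strict positivity $R_\rho(\alpha)>0$ follows because $\Lambda(\gamma) < 0$ throughout $[0,1/2]$ (each of $H(\gamma) - \log 2$ and $\log\rho(1-2\gamma)$ is nonpositive and not simultaneously zero, and $\Lambda(1/2) = -\infty$ because $\rho(0) = 0$), while monotonicity and continuity of $R_\rho$ in $\alpha$ follow from $\log \rho(1-2\gamma) \leq 0$ via a standard envelope argument.

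The upper bound $\limsup n^{-1} \log \Prn[A(n,m_n)] \leq -R_\rho(\alpha)$ is immediate from the triangle inequality in $(\star)$, since the polynomial factor $n+1$ is absorbed at exponential scale. For the matching lower bound, pick $\gamma^* \in [0, 1/2)$ attaining $\sup_{[0,1/2]} \Lambda$, so that $\rho(1-2\gamma^*) > 0$, and consider the $\Theta(\sqrt n)$ terms of $(\star)$ with $|k - \gamma^* n| \leq \sqrt n$; on this window $\phi_{n,k}$ is strictly positive and uniformly close to $\rho(1-2\gamma^*)$, while $\Lambda$ is flat to order $1/n$ by a Taylor expansion around its interior maximiser, so the window contributes at least $\exp(-n R_\rho(\alpha) + o(n))$. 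Preventing this lower bound from being cancelled by other (possibly negative) terms of $(\star)$ is exactly where hypotheses (i) and (ii) enter: under (i), $\phi_{n,k}^{m_n} \geq 0$ for every $k$ since $m_n$ is even, so no cancellation can occur; under (ii), $\Pr[W \in 2\Z]>0$ yields the strict inequality $|\rho(-s)| < \rho(s)$ on $(0,1)$, placing every term with $k > n/2$ at exponential rate strictly below $-R_\rho(\alpha)$, making any sign cancellations subleading.

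The principal technical obstacle will be establishing the convergence $\phi_{n,k_n} \to \rho(1-2\gamma)$ with enough uniformity in $k_n$ over the critical window, especially since the theorem allows $W_n$ that are not uniformly bounded (only $W_n \tod W$ is assumed). A Skorokhod coupling in which $W_n \to W$ almost surely, combined with dominated convergence for the main contribution and a truncation argument controlling the contribution of rows with very large $W_n$ separately, should handle this; the quantitative bound $|\phi_{n,k} - \rho_n(1-2k/n)| = O(r_1^2/n)$ valid under $W_n \leq r_1$ suggests that after truncation the approximation error is of lower order throughout.
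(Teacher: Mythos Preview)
Your approach is essentially the paper's: formula $(\star)$ coincides with (\ref{phyp}), and the Stirling/Laplace analysis and identification of $R_\rho$ match Lemma~\ref{binlem}. The main difference is in how the uniformity you flag is obtained. The paper introduces an auxiliary \emph{binomial} allocation model (each row built by throwing $W$ balls independently into $n$ columns and reducing multiplicities mod $2$), for which the Fourier coefficient is \emph{exactly} $\rho(1-2k/n)$ rather than approximately (see (\ref{pmulti}) in Lemma~\ref{lem:nullp}); it proves the limit there first, and then transfers to the general model via the one-line coupling bound $\sup_J\,\bigl|\Expn[(-1)^{S_{1,J}}]-\Expb[(-1)^{S_{1,J}}]\bigr|\le 2\,\Pr[W_n\neq W_n^{\rm bin}]\to 0$ (equation (\ref{delta}) in Lemma~\ref{approxlem1}). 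This delivers the uniform approximation $\phi_{n,k}=\rho(1-2k/n)+o(1)$ you need, without the Skorokhod/truncation programme.

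For odd $m_n$ under (ii) the paper also takes a different route: instead of a sign argument it uses a bridging lemma (Lemma~\ref{oddlem}) showing that $\log\Prn[A(n,m)]$ changes by only $O(\log n)$ when $m$ changes by $3$, since three rows of a fixed even weight can themselves form a hypercycle with polynomial probability; this reduces to even $m_n$ where all terms of $(\star)$ are nonnegative. Your sign argument has a small gap: the claim that (ii) forces $|\rho(-s)|<\rho(s)$ strictly on $(0,1)$ is false when $W$ is a.s.\ even, since then $\rho(-s)=\rho(s)$. In that sub-case the limiting coefficients are nonnegative throughout, so any $\phi_{n,k}<0$ must occur where the limit is $0$ and hence contributes negligibly; thus the conclusion survives, but not by the reason you give.
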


The relevance of Theorem \ref{allothm}   to Theorem \ref{thm1} is clear; see
the formula (\ref{eq1}) below. 
We present an interesting special case of
Theorem \ref{allothm}
  that
can be understood in isolation.
Let $\pi_{n}(m)$ denote the probability that
 all the components $Y_j$ of a multinomial
$(m; n^{-1}, \ldots, n^{-1} )$ random vector
$(Y_1, \ldots, Y_n)$ are even.
Here $Y_j$ can be interpreted as the occupancy of urn $j$
after $m$ balls are independently and uniformly distributed
into $n$ distinct urns: see e.g.\ \cite[p.\ 23]{moran}, \cite[p.\ 11]{mahmoud}, or \cite[p.\ 90]{jk}.

Then
$ \pi_n (m)  = 2^{-n} \sum_{j=0}^n \binom{n}{j}
\left( 1 - (2j/n )  \right)^m ;$
this formula
is known in the Ehrenfest urn literature
 (see \cite[pp.\ 128--129]{mahmoud})
 and can also be obtained from (\ref{pmulti})
below.
 If $m$ is  odd, $\pi_n (m)$ must be zero.

\begin{proposition}
\label{multi}
 Let $\pi _n(m_n)$ denote the
 probability that all the $n$ components
  of a multinomial $(m_n;
n^{-1},\ldots , n^{-1})$ random vector
 are even. Suppose that  $m_n$ is even for each $n$ and
$m_n/n \to \alpha = \lambda \tanh \lambda \in (0,\infty)$ as $n \to \infty$.
Then
\begin{equation}
\label{multilim}
\lim_{n\to \infty}
  n^{-1} \log \pi_n(m_n)  =
\log  \cosh \lambda  -
( \lambda  \tanh \lambda ) (1 - \log  \tanh \lambda  ) .
\end{equation}
\end{proposition}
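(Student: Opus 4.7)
The plan is to deduce Proposition \ref{multi} as the $\rho(s) = s$ specialisation of Theorem \ref{allothm}, followed by explicit calculus to evaluate the resulting variational formula.

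\emph{Identification.} Consider the random matrix model with $W_n \equiv 1$ almost surely for all $n$, so $\rho(s) = s$. Each row $X_k$ is then uniform on $\{e_1, \ldots, e_n\}$; writing $X_k = e_{I_k}$ with $I_1, \ldots, I_m$ i.i.d.\ uniform on $[n]$, the column sums $Y_j := \#\{k \leq m : I_k = j\}$ form a multinomial$(m; n^{-1}, \ldots, n^{-1})$ vector. The event $A(n,m)$ in (\ref{Adef}) is exactly the event that all $Y_j$ are even, so $\pi_n(m) = \Prn[A(n,m)]$. Since $m_n$ is even, Theorem \ref{allothm} applies (via condition (i)) and yields
\begin{equation*}
\lim_{n\to\infty} n^{-1} \log \pi_n(m_n) = -R_\rho(\alpha) = \log \sup_{\gamma \in [0,1/2]} \Phi(\gamma), \qquad \Phi(\gamma) := \frac{(1-2\gamma)^\alpha}{2 \gamma^\gamma (1-\gamma)^{1-\gamma}} .
\end{equation*}

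\emph{Optimisation.} Write $f := \log \Phi$. Both $\gamma \mapsto \alpha \log(1-2\gamma)$ (for $\alpha>0$) and the binary entropy $-\gamma\log\gamma - (1-\gamma)\log(1-\gamma)$ are strictly concave on $(0,1/2)$, so $f$ is strictly concave there; since $f(0) = -\log 2$ and $f(\gamma) \to -\infty$ as $\gamma \uparrow 1/2$, the supremum is attained at a unique $\gamma^* \in (0,1/2)$ characterised by $f'(\gamma^*) = 0$, i.e.\
\begin{equation*}
\log \frac{1-\gamma^*}{\gamma^*} = \frac{2\alpha}{1-2\gamma^*} .
\end{equation*}
Substituting $1 - 2\gamma^* = \tanh \lambda$ for $\lambda > 0$ and invoking the identity $\log((1+\tanh \lambda)/(1-\tanh\lambda)) = 2\lambda$ reduces this to $2\lambda \tanh\lambda = 2\alpha$, matching the hypothesis $\alpha = \lambda\tanh\lambda$. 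Hence $\gamma^* = (1-\tanh\lambda)/2$.

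\emph{Substitution.} Using $1 \pm \tanh \lambda = e^{\pm\lambda}/\cosh\lambda$, whence $\log(1\pm\tanh\lambda) = \pm\lambda - \log\cosh\lambda$, the entropy and $\log 2$ contributions simplify to
\begin{equation*}
-\log 2 - \gamma^* \log \gamma^* - (1-\gamma^*)\log(1-\gamma^*) = \log \cosh \lambda - \lambda \tanh \lambda,
\end{equation*}
while $\alpha \log(1-2\gamma^*) = (\lambda\tanh\lambda)\log\tanh\lambda$. Summing these gives $f(\gamma^*) = \log \cosh\lambda - (\lambda\tanh\lambda)(1 - \log\tanh\lambda)$, matching (\ref{multilim}). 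The only real obstacle is this symbolic simplification, and the hyperbolic parameterisation $1 - 2\gamma^* = \tanh\lambda$ in the optimisation step is chosen precisely to make it painless; there is no further probabilistic difficulty beyond the reduction to Theorem \ref{allothm}.
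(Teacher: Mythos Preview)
Your proof is correct and follows essentially the same route as the paper's: specialise Theorem \ref{allothm} to $\rho(s)=s$, then optimise the resulting one-variable expression via the substitution $1-2\gamma=\tanh\lambda$, with the same algebraic simplifications. The only cosmetic difference is that you invoke strict concavity of $f$ to pin down the maximiser, whereas the paper checks monotonicity of the derivative directly; to be fully airtight you might note that $f'(\gamma)\to+\infty$ as $\gamma\downarrow 0$, so the supremum is not attained at the endpoint $\gamma=0$.
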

This result follows from Theorem \ref{allothm}, as we shall
show in
Section \ref{sec:alloproof}.
Proposition \ref{multi} can also be derived from a result
of Kolchin  \cite[Theorem 2, p.\ 141]{kolchin}.

\subsection{The fixed row-weight case}
\label{sec:fixed}

We now describe the special case of
the results in Section \ref{sec:general} when
 $\Pr[W = r]=1 $,  for some fixed $r$.
The existing literature is largely concerned with
this case  (see the discussion in Section \ref{sec:related} below).

Let $r \in \N$.
Define the  thresholds $\alpha^*_r$, $\alpha_r^\sharp$, and $\ubar \alpha_{r}$ 
to be the values of $\alpha^*_\rho$, $\alpha_\rho^\sharp$,  and $\ubar \alpha_\rho$,  
respectively, in the case where
$\Pr[W = r]=1$ (i.e.\ where $\rho(s)=s^r)$.
By (\ref{alphastar2})
we
have
\[
\alpha^*_r := \inf \{ \alpha \geq 0 : F_r (\alpha ) > 0 \} ,
\]
where
\begin{equation}
\label{Fdef}
F_r (\alpha) :=
\log \sup_{ \gamma \in [0,1/2] }   \left( \frac{(1 + (1-2\gamma)^r )^\alpha }
{2 \gamma^\gamma (1-\gamma)^{1-\gamma} } \right) .
\end{equation}
By
 Proposition \ref{alphaprop},
$F_r(\alpha) = 0$ for $\alpha \leq \alpha_r^*$
 but $F_r(\alpha) > 0$ for $\alpha > \alpha_r^*$.
 If $r \geq 3$,
 $\psi(x) =0$ has a single solution in 
 $(0,1)$ (see Proposition \ref{psiprop} below)
  denoted $x^*_r$ and satisfying
 \begin{equation}
 \label{MP3}
 x^*_r = - \left( 1 -   \left( \frac{r-1}{r} \right) x^*_r \right) \log ( 1 - x^*_r),
 \end{equation}
 and
\begin{equation}
\label{fixedbar} \ubar \alpha_r = h (x^*_r) = - \frac{\log (1-  x^*_r )}{r   (x^*_r)^{r-1}} .\end{equation}
For example,  
$\alpha_3^\sharp \approx 0.818469$, $g^* (\alpha_3^\sharp) \approx 0.715332$, and
$x^*_3 \approx 0.883414$ so $\ubar \alpha_3 \approx 0.917935$
(see also Figure \ref{fig8} below).
The next result is a specialization of Theorems \ref{thm1} and \ref{thm3}
together with
 Theorem \ref{c:alphabar} and Proposition \ref{lessthan1}.

\begin{theorem}
\label{thm0}
Let $r\in \N$. Suppose that $W_n \to r$ in probability.
Suppose that $m_n /n \to \alpha \in (0,\infty)$ as $n \to \infty$.
Then
\begin{equation}
\label{meandecay}
 \lim_{n \to \infty }  n^{-1}\log
\Expn [ \NN (n, m_n ) ] =  F_r (\alpha);
\end{equation}
in particular, the limit in (\ref{meandecay}) is strictly positive if and only if $\alpha > \alpha_r^*$.
If also $\Pr [ W_n \geq 3]=1$ (so $r \geq 3$) and $W_n$ are uniformly bounded, then $\alpha^*_r \leq
\ubar \alpha_r \leq 1$ and for any $\eps >0$,
$$
\lim_{n\to \infty } \Prn \left[( \alpha _{r}^* -\eps) n
\leq   T_n \leq ( \ubar{\alpha }_{r}+\eps) n \right] = 1,
$$
and moreover, there exists $\eps_\alpha >0$ such that for all $\eps \in (0,\eps_\alpha)$,
\begin{equation}
\label{zeroone}
\lim_{n \to \infty}
\Prn
 [ E (n, m_n ; \eps ) ] = \begin{cases}
0 & ~\textrm{if}~ \alpha < \ubar \alpha_r  \\
1 & ~\textrm{if}~ \alpha > \ubar \alpha_r .\end{cases}
\end{equation}
\end{theorem}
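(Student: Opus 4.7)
The plan is to specialize the general results Theorem \ref{thm1}, Theorem \ref{thm3}, Theorem \ref{c:alphabar}, and Proposition \ref{lessthan1} to the case $\rho(s) = s^r$; the bulk of the work has been done by those results, so the present effort amounts to verifying that the simplifications claimed in the statement really do follow. Since $W_n \to r$ in probability with $r$ a constant, one has $W_n \tod W$ with $\Pr[W = r] = 1$, so the limiting probability generating function is $\rho(s) = s^r$ and in particular $\rho(1 - 2\gamma) = (1 - 2\gamma)^r$. Comparing \eqref{Fdef2} with \eqref{Fdef} yields $F_\rho \equiv F_r$, and $\alpha^*_\rho = \alpha^*_r$; then \eqref{meandecay} is just \eqref{thm1eq}, while the strict-positivity characterization is read off from Proposition \ref{alphaprop}.

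Under the additional hypotheses $\Pr[W_n \geq 3] = 1$ (which, via $W_n \tod W$ and the portmanteau theorem, forces $r \geq 3$) and $W_n$ uniformly bounded, Theorem \ref{thm3} applies directly to $\rho(s) = s^r$ and delivers both the inequalities $\alpha^*_r \leq \ubar{\alpha}_r \leq 1$ and the two-sided concentration of $T_n/n$; Proposition \ref{lessthan1} may be invoked to reinforce the bound $\ubar \alpha_r \leq 1$ where needed.

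The most delicate step is the zero-one dichotomy \eqref{zeroone}. I would invoke Theorem \ref{c:alphabar}, which describes the existence and aspect ratio of the $2$-core through the sign of $\psi(g^*(\alpha))$. The clean single-threshold transition stated in \eqref{zeroone} rests on the fixed-weight simplification supplied by Proposition \ref{psiprop}, namely that for $r \geq 3$ the equation $\psi(x) = 0$ has the unique solution $x_r^* \in (0,1)$ satisfying \eqref{MP3}. Uniqueness implies that $\psi \circ g^*$ changes sign exactly once, at $\alpha = \ubar \alpha_r = h(x_r^*)$, with a quantitative bound $|\psi(g^*(\alpha))| \geq c(\alpha) > 0$ on each side. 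The hard part will be selecting $\eps_\alpha > 0$ small enough — depending on $\alpha$ and on the quantitative 2-core estimates output by Theorem \ref{c:alphabar} — that on the $\alpha > \ubar \alpha_r$ side the 2-core has at least $\eps n$ rows and strictly more rows than non-empty columns with high probability, while on the $\alpha < \ubar \alpha_r$ side at least one of these two conditions fails with high probability. With Proposition \ref{psiprop} in hand this is essentially a continuity-and-monotonicity argument applied to the quantitative output of Theorem \ref{c:alphabar}, and it is this translation from the general multi-sign-change setting to the clean fixed-weight dichotomy, rather than any new probabilistic input, that constitutes the main obstacle.
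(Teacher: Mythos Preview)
Your proposal is correct and follows the same route as the paper, which simply records Theorem \ref{thm0} as a specialization of Theorems \ref{thm1} and \ref{thm3}, Theorem \ref{c:alphabar}, and Proposition \ref{lessthan1}; you are right to add Proposition \ref{psiprop} explicitly, since the full monotone dichotomy \eqref{zeroone} for \emph{all} $\alpha \neq \ubar\alpha_r$ (not just $\alpha$ in a one-sided neighbourhood of $\ubar\alpha_r$) requires the single-sign-change property \eqref{sharp}, which Proposition \ref{psiprop} supplies by verifying condition (a) when $\rho(s)=s^r$. One small comment: you describe the translation to \eqref{zeroone} as ``the main obstacle'', but once \eqref{sharp} is in hand it is immediate---for $\alpha>\ubar\alpha_r$ (hence $\alpha>\alpha_r^\sharp$ and $\alpha\notin\DD_\rho=\{\alpha_r^\sharp\}$) Theorem \ref{c:alphabar}(i) gives $n^{-1}\#\{\text{rows in 2-core}\}\to\alpha\rho(g^*(\alpha))>0$ and (iii) gives more rows than occupied columns, so $E(n,m_n;\eps)$ holds a.s.\ eventually for any $\eps<\alpha\rho(g^*(\alpha))$; for $\alpha<\alpha_r^\sharp$ the 2-core has $o(n)$ rows; and for $\alpha_r^\sharp<\alpha<\ubar\alpha_r$ part (iii) with $\psi(g^*(\alpha))>0$ gives fewer rows than occupied columns---so no delicate quantitative bookkeeping is needed.
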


The sharp monotone transition displayed by (\ref{zeroone})
in the fixed weight case was indicated by
Cooper \cite{cooper2}.
In the general, random weight setting, the picture can be more complicated,
and   there exist examples where the transition is {\em non-monotone}: see the example
in Figure \ref{fig2} below.

\begin{rmk}
In the case $r=2$, compare Theorem \ref{thm0} to
Proposition \ref{prop3}(ii) below: the first cycle in the random graph
appears at $m_n = Zn$, where $Z$ has an asymptotic distribution on $(0,1/2)$.
It is a classical result that for $\alpha \in (0,1/2)$, if $m_n / n \to \alpha$, the number of cycles
$\NN (n, m_n)$ in an Erd\H os--R\'enyi graph has a Poisson limit with finite expectation, but the limiting expectation
is infinite for $\alpha \geq 1/2$ (see e.g.\ \cite[\S2.3]{kolchinbook}); we could not find in the literature an explicit reference
to the fact that the expectation blows up {\em exponentially} with $n$ for $\alpha > 1/2$, at the rate given by  Theorem \ref{thm0}
(a classical result of Erd\H os and R\'enyi  states that at $\alpha = 1/2$, the expected number of cycles grows as $\frac14 \log n$: see
Theorem 5a of \cite[p.\ 41]{er60}).
\end{rmk}

\subsection{Threshold numerics and asymptotics}
\label{sec:numbers}

In this section we discuss numerical and asymptotic evaluation of the thresholds in our results for the fixed row-weight case described in Section \ref{sec:fixed}.
Table \ref{tab1} shows values of $\alpha_r^\sharp$, $\alpha^*_r$,
and $\ubar{\alpha }_r$, for $r \leq 8$. Previous computations of these thresholds
 are reviewed
in Section \ref{sec:related}.
As suggested by the numerical results,  
 it can be shown that, for $r$ large enough, $\alpha_r^\sharp < \alpha^*_r < \ubar \alpha_r < 1$;
this is a consequence of the following result.

\begin{proposition}
\label{threshasym}
As $r \to \infty$,
\begin{equation}
\label{starsim}
\alpha^\sharp_r \to 0 ; ~~~
1- \alpha^*_r \sim  \frac{\re^{-r}}{\log 2} ; ~~~ 1 - \ubar \alpha_r \sim  \re^{-r} .\end{equation}
\end{proposition}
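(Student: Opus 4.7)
The three asymptotics are established separately.

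For the first, $\alpha^\sharp_r = \inf_{x \in (0,1)} h(x)$ where $h(x) = -\log(1-x)/(rx^{r-1})$, so it is enough to exhibit a point where $h$ is small. Evaluating at $x = 1-1/r$ gives $-\log(1-x) = \log r$ and $(1-1/r)^{r-1} = \re^{-1}(1+O(1/r))$, whence
\[
\alpha^\sharp_r \le h(1-1/r) = \frac{\re \log r}{r}(1+o(1)) \to 0.
\]

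For the second asymptotic, the key observation is that the expression inside the supremum in (\ref{Fdef}) equals $1$ identically at $\gamma = 1/2$, regardless of $\alpha$ or $r$. Consequently $F_r(\alpha) > 0$ iff this expression exceeds $1$ at some $\gamma \in (0,1/2)$, which after rearrangement gives
\[
\alpha^*_r = \inf_{\gamma \in (0,1/2)} G_r(\gamma), \qquad G_r(\gamma) := \frac{\log 2 - H(\gamma)}{\log(1+(1-2\gamma)^r)},
\]
with $H(\gamma) := -\gamma \log \gamma -(1-\gamma)\log(1-\gamma)$ the binary entropy. I would then Taylor-expand: the numerator equals $\log 2 + \gamma \log \gamma - \gamma + O(\gamma^2)$, while in the regime $\gamma r \to 0$ the denominator equals $\log 2 - \gamma r + O((\gamma r)^2)$, giving
\[
G_r(\gamma) = 1 + \frac{\gamma(\log \gamma + r - 1)}{\log 2}(1+o(1)).
\]
The bracket is minimised at $\gamma_r = \re^{-r}$, where it equals $-\re^{-r}$, yielding the sharp upper bound $\alpha^*_r \leq 1 - \re^{-r}/\log 2 + o(\re^{-r})$. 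The matching lower bound requires ruling out smaller values of $G_r$ outside this scaling. For $\gamma$ bounded away from $0$ the denominator decays exponentially in $r$ while the numerator is bounded, so $G_r \to \infty$; for $\gamma$ of order $1/r$ or larger (but still small), an elementary check using $(1-2\gamma)^r \le \re^{-2\gamma r}$ together with the expansion of $H$ shows that $G_r(\gamma) \ge 1 - o(\re^{-r})$. This regime analysis is the main technical obstacle.

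For the third, put $\delta_r := 1-x^*_r$; then (\ref{MP3}) becomes
\[
r(1-\delta_r) = -(1+(r-1)\delta_r)\log \delta_r.
\]
Since pointwise on $(0,1)$ one has $\psi(x) \to x+(1-x)\log(1-x) > 0$ as $r \to \infty$, while for each $r$ we have $\psi(x) \to -\infty$ as $x \uparrow 1$, it follows that $x^*_r \to 1$, i.e.\ $\delta_r \to 0$. Writing $\delta_r = c_r \re^{-r}$ and using $-\log \delta_r = r - \log c_r$, the displayed equation reduces to $\log c_r = c_r \re^{-r}[r^2-(r-1)\log c_r]$, forcing $c_r \to 1$ with $\log c_r = r^2 \re^{-r}(1+o(1))$. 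Substituting $\delta_r = c_r \re^{-r}$ into (\ref{fixedbar}) and expanding $(1-\delta_r)^{r-1} = 1 - (r-1)\re^{-r}(1+o(1)) + O(r^2 \re^{-2r})$, we obtain
\[
\ubar \alpha_r = \frac{r - \log c_r}{r(1-\delta_r)^{r-1}} = \bigl(1 - \re^{-r}(1+o(1))\bigr)\bigl(1 + (r-1)\re^{-r}(1+o(1))\bigr)^{-1} \cdot \frac{\text{corrections}}{\text{corrections}},
\]
in which the $(r-1)\re^{-r}$ and the $re^{-r}$ from $r^{-1}\log c_r$ cancel to leading order, leaving $\ubar \alpha_r = 1 - \re^{-r}(1+o(1))$, as claimed.
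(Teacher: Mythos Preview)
Your plan is correct and the three computations all go through; the route differs from the paper's in a few places worth noting.

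For $\alpha^\sharp_r$, both you and the paper simply evaluate $h$ at a well-chosen point, but you use $x=1-1/r$ while the paper uses $x=1-\re^{-\alpha r/2}$ for arbitrary fixed $\alpha>0$. Your choice is the simpler one and gives the slightly sharper quantitative bound $\alpha^\sharp_r = O((\log r)/r)$ directly.

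For $\alpha^*_r$, the paper does not prove this at all; it cites Calkin. Your reformulation $\alpha^*_r=\inf_{\gamma\in(0,1/2)}G_r(\gamma)$ with $G_r(\gamma)=(\log 2-H(\gamma))/\log(1+(1-2\gamma)^r)$ is correct (this is exactly the equation $F_{r,\alpha}(\gamma)=0$ solved for $\alpha$), and your expansion near $\gamma=\re^{-r}$ yielding $1-\re^{-r}/\log 2$ is accurate. The ``main technical obstacle'' you flag, the lower bound, is genuine: Calkin's argument handles the intermediate regime by a somewhat delicate case analysis, and your sketch of it (exponential decay of the denominator for $\gamma$ bounded away from $0$, and the bound $(1-2\gamma)^r\le \re^{-2\gamma r}$ for small $\gamma$) is on the right track but would need to be made precise.

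For $\ubar\alpha_r$, the paper appeals to the iteration scheme in Section~2.5 to obtain the two-term expansion $x^*_r=1-\re^{-r}-r^2\re^{-2r}+O(r^4\re^{-3r})$, and then Taylor-expands (\ref{fixedbar}). You instead substitute $\delta_r=c_r\re^{-r}$ directly into (\ref{MP3}) and bootstrap. Both approaches lead to the same final expansion; yours is more self-contained but you should make the bootstrapping explicit: from $r(1-\delta_r)=-(1+(r-1)\delta_r)\log\delta_r$ one first shows $(r-1)\delta_r\to 0$ (ruling out the alternatives by the order-of-magnitude contradictions you have in mind), hence $-\log\delta_r\sim r$, and only then is the ansatz $c_r\to 1$ justified. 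The final cancellation you describe, in which the contribution $-r\re^{-r}$ from $r^{-1}\log c_r$ and the $(r-1)\re^{-r}$ from $(1-\delta_r)^{-(r-1)}$ combine to leave $-\re^{-r}$, matches the paper's computation line for line.
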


The  asymptotic result for $\alpha^*_r$ in (\ref{starsim}) is due to Calkin \cite{calkin2}; we
prove the other two parts in Section \ref{secPoisson} below.

\begin{table}
\center
\begin{tabular}{|c|c|c|c|c|c|c|c|c|}
\hline
$r$ & 1 & 2 & 3 & 4 & 5 & 6 & 7 & 8 \\
\hline
$\alpha_r^\sharp$  &  0 &  0.5 &  0.818469 & 0.772280 & 0.701780 & 0.637081 & 0.581775 & 0.534997  \\
\hline
$\alpha^*_r$ & 0 & 0.5 & 0.889493 & 0.967147 & 0.989162 & 0.996228 & 0.998650 & 0.999510 \\
\hline
$\ubar{\alpha }_r$ &  --- & ---  & 0.917935 & 0.976770 & 0.992438 & 0.997380 & 0.999064 & 0.999660 \\
\hline
\end{tabular}
\caption{Threshold parameters for $r$-uniform random hypergraphs. Note that $\ubar{\alpha}_r$ is not defined
when $r =1$ or $2$.}
\label{tab1}
\end{table}

One can obtain arbitrarily sharp upper and lower bounds for the solution  $x^*_r \in (0,1)$
of $\psi(x)=0$
 in the case $\rho(s)=s^r$, $r \geq 3$, as follows. In this case, by
(\ref{MP3}) we have that
$x^*_r = i_r(x^*_r)$, where we set
\[ i_r (x) := 1 -\exp \left\{ - \frac{x}{1-(\frac{r-1}{r}) x } \right\} .\]
For $\theta \in [0,1)$, $x \mapsto \frac{x}{1-\theta x}$ is strictly increasing for $x \in [0,1]$. 
Thus if $x^*_r > a_n$, it follows that
$x^*_r > a_{n+1} := i_r (a_n)$. Also, $i'_r(0) = 1$, $i''_r (0) = \frac{2-r}{r} >0$,
 and $i_r (1) = 1 -\re^{-r} <1$, so $i_r(x) > x$ for $x \in (0,x^*_r)$ but 
$i_r(x) < x$ for $x \in (x^*_r,1]$.
Hence starting with $a_0 = \frac{r-2}{r-1} < x^*_r$ (an inequality proved in Proposition \ref{psiprop} below), we can
 iterate to obtain an increasing sequence of lower bounds $a_n$ for $x^*_r$.
Conversely, starting instead with $b_0 =  1 > x^*_r$ and iterating $b_{n+1} := i_r (b_n)$
 gives a decreasing sequence of upper bounds $b_n$ for $x^*_r$. For example,
after one step we get
\[  1 - \exp \left\{ - \frac{r(r-2)}{2(r-1)} \right\} < x^*_r <  1 - \re^{-r}  , ~~~ (r \geq 3).\]
Proceeding up to $b_2$ for the upper bound and $a_4$ for the lower bound is sufficient to obtain the $r \to \infty$ asymptotic expression
\begin{equation}
\label{grsim}
 x^*_r = 1 - \re^{-r} - r^2 \re^{-2r} + O (r^4 \re^{-3r} ) ,\end{equation}
which will be the main ingredient in the proof of the $\ubar \alpha_r$ result in (\ref{starsim}).

In fact, this iterative procedure converges, so $a_n \uparrow x^*_r$ and $b_n \downarrow x^*_r$.
To prove convergence
 it is sufficient to show that
$i'_r(x) < 1$ at $x = x^*_r$. A calculation shows that 
$i'_r(x)$ evaluated at $x = x^*_r$ comes to
$x^{-2} (1-x) (\log (1-x))^2$, so for the required inequality it
suffices to show that
\[ - x^{-1} \log (1-x) < (1-x)^{-1/2} , ~\textrm{for}~ 0 < x < 1 .\]
The coefficient of $x^k$ in the power series expansion of the left-hand side of the last inequality is
$1/(k+1)$, and for the right-hand side it is $4^{-k} \binom{2k}{k}$, and both series are
convergent on the given interval. An induction shows that $1/(k+1) \leq
4^{-k} \binom{2k}{k}$ for all integers $k \geq 0$. So term-by-term comparison
of the two power series gives the inequality.

To end this section we discuss the numerical evaluations  of $\alpha^*_r$ in Table \ref{tab1}.
In this discussion we use some claimed properties of the functions involved that we do not
verify rigorously, since here we are only concerned with numerical estimation.
 Let
 \begin{equation}
 \label{Frdef} F_{r,\alpha} (\gamma) := \log \left( \frac{(1 + (1-2\gamma)^r )^\alpha }
{2 \gamma^\gamma (1-\gamma)^{1-\gamma} } \right) ,\end{equation}
so that $F_r (\gamma) = \sup_{\gamma \in [0,1/2]} F_{r,\alpha} (\gamma)$.
Differentiating, we obtain
\begin{equation}
\label{Fdiff}
\frac{\ud }{\ud \gamma} F_{r,\alpha} (\gamma) = - \frac{2\alpha r (1-2\gamma)^{r-1}}{1+(1-2\gamma)^r} + \log \left( \frac{1-\gamma}{\gamma} \right) .\end{equation}
Thus $\gamma$ is a stationary value for $F_{r,\alpha}$ if $\gamma$ solves
\begin{equation}
\label{gamma0}
\alpha =  \frac{1 + (1 - 2 \gamma )^r}{2 r (1 - 2 \gamma )^{r-1}}
\log \left( \frac{1 - \gamma }{\gamma } \right) =: \alpha_r (\gamma).
\end{equation}
Numerical curve sketching shows that (\ref{gamma0}) generically has at most 2 solutions in $(0,1/2)$;
of such solutions, the smallest will be the local maximum, since $F'_{r,\alpha} (\gamma) \to \infty$ as $\gamma \downarrow 0$,
by (\ref{Fdiff}). If (\ref{gamma0}) has no solutions in $(0,1/2)$, then the supremum in (\ref{Fdef})
is either $F_{r,\alpha} (0) = (\alpha -1) \log 2$ or $F_{r,\alpha}(1/2) = 0$.
Thus setting $\gamma_0 := \gamma_0 (\alpha, r) = 0$ if (\ref{gamma0}) has no solutions in $(0,1/2)$ and
$\gamma_0 := \gamma_0 (\alpha, r)$ to be the  smallest positive solution to (\ref{gamma0}) otherwise,
we have that $F_r(\alpha) = F_{r, \alpha} (\gamma_0)$ whenever $\alpha \in (0,1)$.

For $\alpha \in (0,1)$, $F_r(\alpha) >0$
if and only if $\gamma_0 (\alpha, r) > 0$.
Moreover, Proposition \ref{alphaprop} shows that $\alpha^*_r <1$, so
that for $\alpha <1$ such that $\gamma_0 (\alpha ,r) >0$,
$F_r(\alpha) = \alpha_r (\gamma_0) \log ( 1 + (1-2\gamma_0)^r ) - \log ( 2 \gamma_0^{\gamma_0} (1-\gamma_0)^{1-\gamma_0} )$.
Thus to find $\alpha_r^*$, we solve for $\gamma \in [0,1/2]$ the equation
\begin{equation}
\label{numerical}
\alpha_r (\gamma) -\phi_r (\gamma) = 0 ,
\end{equation}
where  \[
\phi_r (\gamma) = \frac{\log{(2 \gamma^{\gamma} (1 - \gamma)^{1 - \gamma})}}{\log{(1 + (1 - 2 \gamma)^r)}}.
\]
Numerical curve plotting shows that $\gamma \mapsto \alpha_r (\gamma) -\phi_r (\gamma)$ is decreasing
on $[0,1/2]$, so  (\ref{numerical}) can be solved using efficient numerical methods; let $\gamma_r$
denote the solution to (\ref{numerical}). Then we compute $\alpha^*_r$ via
$\alpha^*_r  = \alpha_r (\gamma_r)$.

\subsection{Discussion and related results}
\label{sec:related}

\subsubsection{Previous results on threshold values}
In the simplest case, $W_n = W_n^{\rm hyp} := r \wedge n$ a.s.,  for a fixed $r \in \N$; then $W=r$ a.s.\
This fixed row weight `hypergeometric' model is  studied by Cooper \cite{cooper1}.
A   variation is the model in which $r$ units are assigned to the row
{\em independently} and uniformly at random, with multiplicities reduced mod $2$.
The latter `binomial' model corresponds to $W_n = W_n^{\rm bin}$ distributed as the number of odd
components in a multinomial $(r; n^{-1}, \ldots, n^{-1})$ random vector; then
$W_n \tod r$ (see Lemma \ref{wnconv} below). The $r \geq 3$ binomial model is  studied
by Kolchin \cite{kolchin}. Note that in this model rows of all zeroes may appear,
in which case they are ignored (in other words, empty hyperedges are discounted):
this is a small effect since $\Pr [ W_n^{\rm bin} =0 ] = O ( n^{-r/2} )$,
so a vanishing proportion of rows needs to be discarded.

Phase transitions in the null vector problem for random matrices over finite
 fields with  fixed row weight $r \geq 3$
  have been studied since the early 1990s.
In the case of the binomial model,
 the threshold $\alpha^*_r$, $r \geq 3$, for $\Exp [ \NN (n ,m_n ) ]$,
$m_n / n \to \alpha$, was described by Balakin {\em et al.} \cite{bkk} and Kolchin \cite{kolchin}
(having been announced in \cite{kolchin92}); in these results $\alpha^*_r$
is characterized
 by the fact that
 the expected number of non-trivial null vectors
tends to $0$ ($\infty$) when $\alpha < \alpha^*_r$ ($\alpha > \alpha^*_r$),
but the proofs show that the growth is in fact exponential for
 $\alpha > \alpha^*_r$.
Calkin \cite{calkin2} and Cooper \cite{cooper1} also study $\alpha^*_r$, $r \geq 3$, and in particular Calkin \cite{calkin2} studies
  $\alpha^*_r$ as $r \to \infty$; both \cite{cooper1} and \cite{calkin2} work in the case $W_n = r \wedge n$.
Note that Cooper's \cite{cooper1} expression of the matrix problem
 is transposed compared to ours.
The first part of our Theorem \ref{thm0} represents
a slight generalization  of the results just mentioned above because
it allows for any class of sequences $W_n$ provided
 $W_n \to r$ in probability.
The case of finite fields of order $q \geq 3$ has also been studied: see for instance \cite{kolchin2,cooper1,calkin1}.

In these previous investigations, the
analytic description of the threshold $\alpha^*_r$ varies.
Calkin \cite[\S 4]{calkin2} gives the same description of $\alpha^*_r$ as
our equation (\ref{Fdef}).
Systems of nonlinear equations for computing $\alpha^*_r$
have been proposed in \cite{bkk,kolchin,cooper1}; these descriptions can be shown
to be consistent with ours. Specifically, with $F_{r,\alpha} (\gamma)$ as given at (\ref{Frdef}),
one may characterize $\alpha_r^*$ by the two equations $F_{r,\alpha} (\gamma ) = 0$
and $\frac{\ud}{\ud \gamma} F_{r,\alpha} (\gamma ) =0$. On the substitution
$\lambda = \frac{1}{2} \log ( \frac{1-\gamma}{\gamma} )$,  the first of these equations
becomes, after some calculations along the lines of those in the proof of Proposition
\ref{multi} below,
\begin{equation}
\label{old1}
( 1 + (\tanh \lambda )^r )^\alpha \re^{-\lambda \tanh \lambda} \cosh \lambda = 1 .\end{equation}
The second equation, involving the vanishing of the derivative given at (\ref{Fdiff})
 gives, after the same substitution for $\lambda$,
\begin{equation}
\label{old2}
r \alpha = (1 + (\tanh \lambda)^{-r} ) \lambda \tanh \lambda .\end{equation}
The system of equations (\ref{old1}) and (\ref{old2}) is the same
as that given by Cooper \cite[p.\ 269]{cooper1}, and, after some manipulation, is
seen to coincide
also with that given by Balakin {\em et al.} \cite[p.\ 564]{bkk} and Kolchin \cite[p.\ 139]{kolchin}.
Despite this agreement, there are some small discrepancies in the numerical evaluations
for $\alpha_r^*$ in \cite{bkk,kolchin,calkin2,cooper1}, which can presumably be put down to numerical inaccuracies.

A similar tabulation to our
tabulation of $\ubar \alpha_r$ is given by Cooper \cite[pp.\ 370--371]{cooper2}, who also gives
an equivalent  analytic description of $\ubar \alpha_r$ to  our (\ref{MP3});
see also
Dietzfelbinger {\em et al.} \cite{DGMMPR} which we discuss further
in the next subsection. We note also that $\alpha_r^\sharp$ has received considerable
attention
in its own right:
see e.g.\ \cite{ikkm} for its role in random XORSAT.

\subsubsection{Between the two thresholds}
\label{between}

The following problem arises in the XORSAT literature.
Let $r \in \N$ with $r \geq 3$.
Let $M$ be our $m \times n$ matrix, with
 $m/n \to \alpha >0$, and suppose $W_n = r$ a.s.\ for all $n \geq r$.
Let $N$ denote the number of column vectors $x \in \{0,1\}^n$ such that
$M  x \equiv \omega$, where $\omega \in \{0,1\}^m$ is chosen
  uniformly at random
 (independent of $M$). Thus $N$ is a random variable.

Dubois and Mandler \cite{dubois} show for $r=3$,
and Dietzfelbinger {\em et al.} \cite{DGMMPR}
extend to general $r \in \N$ with $r \geq 3$ (also
providing a more detailed proof)
the following result (see  \cite[Theorem 3.1]{dubois} and
\cite[Theorem 1]{DGMMPR}, and
 also \cite{ikkm}): there is a constant $\tilde{\alpha}_r >0$ such that
 provided $\alpha < \tilde{\alpha}_r$, $\Pr [ N > 0 ] \to 1$
as $n \to \infty$.

The proof of this in \cite{dubois} is based on
 a second moment calculation. The analytical definition of
 $\tilde{\alpha}_r$
in \cite{dubois,DGMMPR} is not obviously the same as our
definition of $\ubar \alpha_r$,
but the definition in terms of cores (see \cite[Proposition 3 and
equation (4)]{DGMMPR}) seems to match our definition of
$\ubar{\alpha}_r$, and
the numerical values in \cite{DGMMPR} are consistent with our
$\ubar{\alpha}_r$.

If we accept that $\tilde{\alpha}_r = \ubar{\alpha}_r$,
this result implies that if $\alpha < \ubar \alpha_r$
 there is, for $n$ large enough,
 no non-zero left null vector for $M$, as follows.
Suppose that a non-zero $y$ satisfies $y \cdot M = \0$. Then $N >0$ implies
$y \cdot \omega = 0$. So $\Pr [ y \cdot \omega = 0 ] \geq \Pr[ N > 0] \to 1$,
which contradicts the easy observation that $\Pr[ y \cdot \omega = 0 ] = 1/2$
for non-zero $y$.

We may then deduce that in the case with $W_n = n \wedge r$,
our Theorem \ref{thm3} may be strengthened to
$n^{-1} T_n \to \ubar \alpha_r$ in probability.
This implies that for $\alpha$ in the interval $(\alpha^*_r, \ubar{\alpha}_r)$,
a form of substantialism occurs;
existence of any left null vector is unlikely, but if there is one, there are lots of them.

\subsection{Results for other random matrix models}
\label{sec:other}

\subsubsection{The case of fixed weight vectors with $r=1$ or $r=2$}

The classical cases of the constant weight model $W_n = n \wedge r$ in which $r \in \{ 1,2\}$
exhibit  different behaviour from the case $r \geq 3$.
  Recall the definition of $T_n$ from (\ref{Tdef}).

\begin{proposition}
\label{prop3}
(i)  For $r=1$,  for any $z>0$,
$\lim_{n\to \infty } \Pr  [T_n>z n^{1/2} ] =
\exp \{ - z^2 / 2 \}$.

(ii)  For $r=2$,  for any $z\in (0,1)$,
\begin{equation}
\label{rgform}
\lim_{n\to \infty } \Pr  [ T_n> z n/2 ]
= (1-z)^{1/2} \exp \left\{ \frac{z}{2} + \frac{z^2}{4} \right\}.
\end{equation}
\end{proposition}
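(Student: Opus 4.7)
The strategy for both parts is to identify $T_n$ with a classical waiting time in a suitable random-graph/collision model and then appeal to standard asymptotics.

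For part~(i), each row $X_k = e_{I_k}$ is a standard basis vector with $I_1,I_2,\ldots$ i.i.d.\ uniform on $[n]$.  Over $\mathrm{GF}(2)$, a list of standard basis vectors is linearly independent iff the indices are pairwise distinct, so $T_n$ is precisely the first-collision time of the classical birthday problem on $n$ urns, giving
\[
\Pr[T_n > m] = \prod_{k=0}^{m-1}\Bigl(1 - \tfrac{k}{n}\Bigr).
\]
Setting $m = \lceil z n^{1/2}\rceil$ and using $\log(1-k/n) = -k/n + O(n^{-1})$ uniformly in $k \leq m$, the sum collapses to $\log \Pr[T_n > m] = -m(m-1)/(2n) + o(1) \to -z^2/2$, yielding the limit $\exp\{-z^2/2\}$.

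For part~(ii), each row $X_k = e_{I_k}+e_{J_k}$ is the indicator of a uniformly chosen edge $E_k = \{I_k,J_k\}$ of $K_n$, the $E_k$ being i.i.d.  Linear independence of $X_1,\ldots,X_m$ over $\mathrm{GF}(2)$ is equivalent to triviality of the $\mathrm{GF}(2)$-cycle space of the (multi)graph on $[n]$ with edge-multiset $\{E_1,\ldots,E_m\}$ — that is, no repeated edge and no simple cycle.  Hence $\Pr[T_n > m]$ equals the probability that $m$ i.i.d.\ uniform edges of $K_n$ form a simple forest, and for $m \sim zn/2$ this is exactly the setting of the classical Erd\H{o}s--R\'enyi forest probability (see \cite[Ch.~3]{kolchinbook}).

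The standard derivation of that classical formula is a Poisson approximation for short-cycle counts:\ for each fixed $k \geq 3$, the number of $k$-cycles converges jointly in law to independent Poisson random variables with means $\lambda_k = z^k/(2k)$, and the identity $\sum_{k\geq 3} z^k/(2k) = -\tfrac{1}{2}\log(1-z) - z/2 - z^2/4$ valid for $z \in (0,1)$ immediately gives $(1-z)^{1/2}\exp\{z/2 + z^2/4\}$ as the limiting no-cycle probability.  The main technical step is this joint Poisson approximation for cycle counts, most cleanly handled by the method of factorial moments; a secondary bookkeeping point is reconciling the classical simple-graph statement with our i.i.d.\ sampling-with-replacement edge model.
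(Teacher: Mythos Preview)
Your treatment of part~(i) is identical to the paper's: both identify $T_n$ with the birthday-problem collision time and expand $\sum_{j=1}^{m-1}\log(1-j/n)$ at $m=zn^{1/2}$.

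For part~(ii) the paper does not argue at all: it simply identifies $T_n$ with the first-cycle time in the Erd\H{o}s--R\'enyi random graph process and cites Janson's Theorem~8.1 for the formula.  Your Poisson-approximation sketch is the standard route to that theorem, so your approach is more informative but entirely compatible with the paper's.

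One point deserves more care than you give it.  You correctly note that in the i.i.d.\ edge model a repeated edge already creates a linear dependency, yet your Poisson sum runs only over cycle lengths $k\ge 3$.  At $m\sim zn/2$ the expected number of repeated-edge pairs is $\binom{m}{2}/\binom{n}{2}\to z^2/4$, so the $k=2$ term is of the same order as the others; including it shifts the exponent by $-z^2/4$ and yields $(1-z)^{1/2}e^{z/2}$ rather than $(1-z)^{1/2}e^{z/2+z^2/4}$.  The ``secondary bookkeeping'' you defer therefore cannot be dismissed: either one must argue carefully why the replacement-free limit is the right target, or one must accept the altered constant.  The paper's one-line proof glosses over precisely this point by equating the i.i.d.\ model directly with the replacement-free Erd\H{o}s--R\'enyi process before invoking Janson, so the same issue is implicit there as well.
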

 \begin{proof}
 In the case $r=1$, $X_1, X_2, \ldots , X_m$ correspond
 to $m$ repetitions of the experiment of placing a ball uniformly at random
in one of $n$ urns. Then $T_n$ is the first trial at which
a ball is placed in an urn which is already occupied, and its law is given by solution
to the birthday problem \cite[p.\ 33]{feller1}:
\[\log  \Pr  [T_n>m]
 = \sum _{j=1}^{m-1}  \log \left(1 - \frac{j}{n}\right)
 = -\frac{m (m-1)}{2 n} + O ( m^3/n^2 ),\]
provided $m = o(n)$.
Take $m = z n^{1/2}$ to obtain the result.

In the case $r=2$, $T_n$ is the same as the number of edges which have
to be added in the Erd{\H o}s--R\'enyi random graph process in order
for the first cycle to appear.
The formula (\ref{rgform}) has been derived
by Janson \cite[Theorem 8.1]{janson}. \end{proof}

\subsubsection{Uniform non-zero random vectors over a finite field}

For the sake of comparison with sparse matrix phenomena, the case
where $X$ is selected uniformly at random from
$\{0,1\}^n \setminus \{\0\}$
 is worthy of mention.

In fact we give a result for
the more general finite field GF$[q]$ for arbitrary prime power  $q$.
  Let $E_q^n := \{0,\ldots,q-1\}^n$.
  Suppose (for this section only)
 that $X_1, X_2, \ldots$
are i.i.d.\ random vectors
that are uniformly distributed over
 $E_q^n \setminus \{ \0\}$, and
let $M (n,m)$ be the $m \times n$ matrix
over GF$[q]$ with rows $X_1,\ldots,X_m$.
 We write $\Pr_{*}$ for probability associated with this model.
Define $T_n$ by (\ref{Tdef}) (with addition mod $q$).
The following elementary result
proves that $n+1-T_n$ is likely to be a small integer,
uniformly in $n$.
 Write $\ZP := \{ 0,1,2,\ldots\}$.

\begin{proposition}
\label{prop1}
Suppose that $X_1, X_2, \ldots$ are independent and uniformly
distributed on $E_q^n \setminus \{ \0 \}$.
As $n \to \infty$, $n+1 - T_n$ converges to a
 distribution on $\ZP$: for any $r \in \ZP$,
\begin{equation}
\label{Tlim}
 \Pr_* [ T_n > n +1 -r ] \to \prod_{j=r}^\infty (1 - q^{-j} ) =: p_r \in [0,1] ,
\end{equation}
where $p_0 = 0$ and $p_r \sim \exp \{ - q^{1-r} /(q-1) \}$
as $r \to \infty$.
Moreover, the following lower bounds apply: for $n \in \N$ and $1 \leq r \leq n$,
\[ \Pr_* [ T_n > n+1 -r ] \geq \begin{cases}
\exp \{ - q^{1-r} \} & \textrm{if} ~ q \geq 3 \\
\exp \{ - \frac{4}{3} 2^{1-r} \} & \textrm{if} ~ q = 2
\end{cases}
. \]
\end{proposition}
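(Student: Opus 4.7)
\medskip

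\noindent
\textbf{Proof proposal for Proposition \ref{prop1}.}

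The plan is to reduce the problem to a direct combinatorial calculation, exploiting the fact that $X_1,\ldots,X_m$ uniform on $E_q^n\setminus\{\0\}$ is a particularly tractable model. Observe that $\{T_n>m\}$ is precisely the event that $X_1,\ldots,X_m$ are linearly independent over GF$[q]$. Conditional on $X_1,\ldots,X_{j}$ being linearly independent, they span a $j$-dimensional subspace containing $q^{j}$ vectors, so the conditional probability that $X_{j+1}$ lies outside this span is $(q^n-q^{j})/(q^n-1)$. Multiplying and setting $m=n+1-r$, and reindexing $i=n-j$, I obtain
\begin{equation*}
\Pr_*[T_n>n+1-r] \;=\; \prod_{j=0}^{n-r}\frac{q^n-q^{j}}{q^n-1}
\;=\; \prod_{i=r}^{n}\frac{1-q^{-i}}{1-q^{-n}}.
\end{equation*}
Each factor lies in $(0,1]$, and as $n\to\infty$ the denominator tends to $1$ and the product converges monotonically to $p_r=\prod_{i=r}^\infty(1-q^{-i})$, proving (\ref{Tlim}); note $p_0=0$ because the $i=0$ factor vanishes.

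For the asymptotics as $r\to\infty$, I would expand
\[
-\log p_r=\sum_{i=r}^\infty -\log(1-q^{-i})=\sum_{k=1}^\infty\frac{1}{k}\sum_{i=r}^\infty q^{-ik}=\sum_{k=1}^\infty\frac{q^{-rk}}{k(1-q^{-k})}.
\]
The $k=1$ term equals $q^{1-r}/(q-1)$, and the remaining terms are $O(q^{-2r})$, so $-\log p_r\sim q^{1-r}/(q-1)$, yielding the claimed asymptotic.

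For the finite-$n$ lower bounds, since $(1-q^{-i})/(1-q^{-n})\ge 1-q^{-i}$, I get $\Pr_*[T_n>n+1-r]\ge\prod_{i=r}^n(1-q^{-i})\ge p_r$, so it suffices to bound $-\log p_r$ from above. Using the elementary inequality $-\log(1-x)\le x+x^2/(2(1-x))$ for $x\in[0,1)$, I obtain
\[
-\log p_r \;\le\; \frac{q^{1-r}}{q-1} \;+\; \frac{q^{2-2r}}{2(1-q^{-r})(q^2-1)}.
\]
For $q\ge 3$ and $r\ge 1$, both $q-1\ge 2$ and $(1-q^{-r})(q^2-1)$ are large enough that the right side is easily bounded by $q^{1-r}$, giving $p_r\ge\exp\{-q^{1-r}\}$. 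For $q=2$, the $k=1$ contribution $2^{1-r}$ already saturates this, so I would use the sharper pairwise bookkeeping: the first term equals $2^{1-r}$ exactly, and the second is at most $\frac{4}{3}\cdot 4^{-r}$ (since $1-2^{-r}\ge 1/2$ for $r\ge 1$), which together are dominated by $\frac{4}{3}\cdot 2^{1-r}$ (equivalent to $4^{1-r}\le 2^{1-r}$, true for $r\ge 1$), yielding $p_r\ge \exp\{-\frac43 2^{1-r}\}$.

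The only mildly subtle step is getting the constants in the $q=2$ case right; everything else is a clean counting argument followed by elementary series manipulations. The main potential obstacle is the small-$r$ regime of the lower bound, where one must check the Taylor-style upper bound on $-\log(1-q^{-i})$ is still tight enough, but the explicit numerical check at $r=1$ (where $-\log p_1\approx 1.2425<4/3$) confirms the constant cannot be improved much.
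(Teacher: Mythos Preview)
Your proof is correct and follows essentially the same approach as the paper: both compute $\Pr_*[T_n>n+1-r]$ via the chain of conditional independence probabilities, take the limit to obtain $p_r$, and then bound $-\log p_r$ using a quadratic Taylor-type inequality for $-\log(1-x)$. The only minor difference is the specific inequality chosen---you use $-\log(1-x)\le x+x^2/(2(1-x))$ while the paper uses $\log(1-x)\ge -x-x^2$ for $|x|\le 1/2$---but both lead cleanly to the stated constants.
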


\begin{rmk}
 Limit results of the form of
(\ref{Tlim}) are classical, and apparently
date back at least to an 1895 paper of
Landsberg (see \cite[p.\ 69]{levitskaya}).
The $q=2$ case of (\ref{Tlim})
corresponds to the $T=n-s$, $m+s=0$ case of
\cite[Theorem 3.2.1, p.\ 126]{kolchinbook},
but with a slightly different probabilistic model:
there the $X_i$ are uniform on the whole of $E_2^n$, not just
$E_2^n \setminus \{ \0 \}$; comparing the results shows that this
difference in the probability measures used is negligible in the limit.\end{rmk}

\begin{proof}[Proof of Proposition \ref{prop1}.]
 Let $A_k$ denote the event that
$\{X_1, X_2, \ldots , X_k\}$ is linearly independent.
If $A_k$ occurs, then the span of $X_1, X_2, \ldots , X_k$
is a subspace with $q^k-1$ non-zero elements.
Since $X_{k+1}$ is  (statistically)
 independent  of $X_1, X_2, \ldots , X_k$ and  uniform on $E_q^n \setminus \{\0\}$,
which has $q^n-1$ non-zero elements,
\[ \Pr_* \left[A_{k+1} \mid A_k\right] =
\frac{q^n-q^k}{q^n-1} ,\]
so that, since $\Pr_* [ A_1 ] =1$, for $k \in \N$,
\begin{equation}
\Pr_* [T_n>k] = \Pr_* [A_k] = \prod _{j=1}^{k-1} \Pr_* [A_{j+1} \mid A_j ]
= \prod _{j=1}^{k-1} \frac{q^n - q^j}{q^n - 1}
= (1 - q^{-n} )^{1-k} \prod_{j=1}^{k-1} ( 1 - q^{j-n} )
; \nonumber \end{equation}
with the usual
convention that an empty product is $1$.
Taking $k = n +1 -r$
we obtain
\begin{equation}
\label{pak}
 \Pr_* [ T_n > n+1 -r ] = (1 - q^{-n} )^{r - n}
\prod_{j=r}^{n-1} ( 1 - q^{-j})
\to \prod_{j=r}^\infty (1 - q^{-j} ) ,\end{equation}
on letting $n \to \infty$, establishing (\ref{Tlim}) with $p_r$ as stated there.
The asymptotic form given for $p_r$ follows from writing
$p_r = \exp \sum_{j=r}^\infty \log ( 1 - q^{-j})$ and applying Taylor's
theorem.

Moreover, it follows from   the fact that $(1-q^{-n})^{r-n} \geq 1$ provided $r \leq n$ that
the convergence in (\ref{pak}) is in fact monotone for $n \geq r$, i.e.,
\[ \log   \Pr_* [T_n > n+1 - r]
\geq \sum _{j=r}^{n-1} \log \left( 1 - q^{-j}\right)
\geq \sum_{j=r}^\infty \log \left( 1 - q^{-j} \right) .\]
With $f(x) = \log (1 -x ) +x +x^2$,
we have $f(0)=0$ and $f'(x) = \frac{x(1-2x)}{1-x} \geq 0$
for $|x| \leq 1/2$,
so $\log (1-x) \geq -x -x^2$ for all $x$ with $|x| \leq 1/2$.
Thus for $q \geq 2$ and $r \geq 1$,
\[ \log   \Pr_* [ T_n > n+1 - r]  
\geq - \sum_{j=r}^\infty q^{-j}
-  \sum_{j=r}^\infty q^{-2j}
= - \frac{q^{1-r}}{q-1} \left( 1 + \frac{  q^{1-r}}{q +1} \right) .\]
The stated lower bounds follow.
\end{proof}

\subsubsection{Random vectors of weight $O(\log n)$}
\label{sec:notsosparse}

There is a separate class of results, initiated by the classical work of Kovalenko \cite{kov} and Balakin
\cite{balakin}, in which   the weight is
random with a law which changes as $n$ increases;
in the classical case it is $\Bin (n, (a+\log n)/n)$
 for  $a \in \R$. Much more on this model
is given in \cite{kolchinbook, kl,cooper3, levitskaya}, for example.

\section{Multinomial parities and random allocations}
\label{sec:occupancy}

\subsection{Overview and terminology}

In this section we work towards proving Theorem \ref{allothm}
and Proposition \ref{multi}.
Our null vector problem can be naturally formulated in terms
of classical occupancy problems of random allocations of balls into urns.

We shall use the following terminology.
Suppose $W$ is a random variable taking values
in $\ZP$, and $k \in \N$, and $p,p_1,p_2,\ldots,p_k$
are  numbers in $[0,1]$ such that $\sum_{i=1}^k p_i =1$.
(In most of the rest of the paper we assume $W \geq 1$, but for this section
we can allow $W$ to take value $0$.)
Let us say the random variable $X$ has the
${\rm Bin}(W,p)$ distribution if
 for each $n \in \ZP$
 the conditional distribution of $X$, given that $W=n$,
is binomial with parameters $(n,p)$. Let us say
that a random vector $(Z_1,\ldots,Z_k)$ has
the multinomial $(W;p_1,\ldots,p_k)$
distribution if
 for each $n \in \ZP$
 the conditional distribution of $(Z_1,\ldots,Z_k)$, given that $W=n$,
is multinomial with parameters $(n;p_1,\ldots,p_k)$.

Recall from Section \ref{sec:matrixlaw}
that we assume $W_n$ (having the distribution of row weights for
our matrix with $n$ columns) is chosen
to converge in  distribution to a limiting
random variable $W$. An important special case
is the so-called
{\em binomial} model.
In the binomial scheme
 take $W_n = W_n^{\rm bin}$ to
be distributed as the number of odd components
in a multinomial $(W ; n^{-1} , \ldots, n^{-1} )$ random vector.
By Lemma \ref{wnconv} below,
 $W^{\rm bin}_n \tod W$ as $n \to \infty$,
so this is indeed a special case.

We write $\Prb$
for 
probability associated with the binomial
allocation scheme.
For the general model we write $\Prn$ as
before.

\subsection{Exact formulae for the allocation problem}
\label{sec:exact}

In this subsection $n$ is  fixed.
Let $X_{ij}$ denote the $j$th component of $X_i$.
Define the column sums  $Y_j$ and partial row sums  $S_{i,J}$
 of the matrix $(X_{ij})$ as follows
(in this case the addition does not need
to be mod 2):
\begin{align*}
Y_j      :=  \sum_{i=1}^m X_{ij}, ~ j \in [n]; ~~~\textrm{and}~~~
S_{i,J}  := \sum_{j \in J} X_{ij},  ~ J \subseteq [n] .
\end{align*}

Recall from (\ref{Adef}) that $A(n,m)$ denotes
the event that $\1$ is a null (row) vector for $M$.

\begin{lemma}
\label{lem:nullp}
In the binomial allocation scheme,
 we have the exact formula
\begin{equation}
\label{pmulti}
 \Prb [ A(n,m) ] = 2^{-n} \sum_{j=0}^n \binom{n}{j}
\left( \rho
(1 - (2j/n ) ) \right)^m .
\end{equation}
In the general allocation scheme,
\begin{align}
\label{pAnm}
 \Prn [ A(n,m) ] & = 2^{-n} \sum_{J \subseteq [n]}
\left( \Expn [ (-1)^{S_{1,J} } ] \right)^m
\\
\label{phyp}
& = 2^{-n} \sum_{j=0}^n  \binom{n}{j} (2 p^{(n)}_{j} - 1)^m,
\end{align}
where $p^{(n)}_j := \sum_{r =0}^n p_{j,r} \Pr [ W_n =r]$ and $p_{j,r}$ is given by
\begin{align}
\label{pjk}
p_{j,r} =
\frac{1}{\binom{n}{j}} \left(
\binom{n-r}{j} + \binom{r}{2} \binom{n-r}{j-2} +
\binom{r}{4} \binom{n-r}{j-4} + \cdots \right).
\end{align}
\end{lemma}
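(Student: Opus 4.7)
The plan is to obtain all three formulae from a single Fourier-inversion identity over the group $\{0,1\}^n$ (equivalently, the elementary identity $\mathbf{1}\{y \in 2\Z\} = \tfrac12 (1 + (-1)^y)$). Starting from
\[
 \mathbf{1}_{A(n,m)} \;=\; \prod_{j=1}^n \frac{1 + (-1)^{Y_j}}{2} \;=\; 2^{-n} \sum_{a \in \{0,1\}^n} (-1)^{a \cdot Y},
\]
and using that $a \cdot Y = \sum_{i=1}^m S_{i, J(a)}$, where $J(a) := \{j : a_j = 1\}$, together with the i.i.d.\ assumption on the rows, I get
\[
 \Prn[A(n,m)] \;=\; 2^{-n} \sum_{J \subseteq [n]} \bigl( \Expn[(-1)^{S_{1,J}}] \bigr)^m,
\]
which is (\ref{pAnm}).

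Next I would use the exchangeability of the coordinates of $X_1$ (which follows from the uniform-given-weight construction) to argue that $\Expn[(-1)^{S_{1,J}}]$ depends only on $|J|=:j$. Writing it as $2 p_j^{(n)} - 1$ where $p_j^{(n)} := \Prn[S_{1,J} \in 2\Z]$ and grouping subsets by size gives (\ref{phyp}). To verify (\ref{pjk}), condition on $W_n = r$: given this, $X_1$ is uniform on weight-$r$ vectors in $\{0,1\}^n$, so $S_{1,J}$ is hypergeometric, and summing over even values yields
\[
 p_{j,r} \;=\; \frac{1}{\binom{n}{r}} \sum_{k \text{ even}} \binom{j}{k} \binom{n-j}{r-k}.
\]
Applying the symmetry $\binom{j}{k}\binom{n-j}{r-k}/\binom{n}{r} = \binom{r}{k}\binom{n-r}{j-k}/\binom{n}{j}$ (both count pairs (positions of $k$ ones inside $J$, positions of the other $r-k$ ones outside $J$) normalised by total weight-$r$ configurations) converts the sum into the claimed form.

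Finally, for the binomial scheme (\ref{pmulti}), I compute $\Expb[(-1)^{S_{1,J}}]$ directly without passing through $p_{j,r}$. In that scheme one may construct a row by first sampling $W$ and then throwing $W$ balls i.i.d.\ uniformly into $n$ urns, setting $X_{1i}$ to the parity of occupancies, so
\[
 (-1)^{S_{1,J}} \;=\; \prod_{i \in J} (-1)^{X_{1i}} \;=\; (-1)^{B_J},
\]
where $B_J$ is the number of the $W$ balls landing in urns indexed by $J$. Conditional on $W$, $B_J$ is binomial $(W, j/n)$, so $\Exp[(-1)^{B_J} \mid W] = (1 - 2j/n)^W$ and taking expectation yields $\Expb[(-1)^{S_{1,J}}] = \rho(1 - 2j/n)$; substituting into (\ref{pAnm}) gives (\ref{pmulti}).

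There is no real obstacle here; the proof is a routine character-sum calculation, and the only minor care needed is the combinatorial identity used to pass between the two forms of the hypergeometric even-occupancy probability in (\ref{pjk}), plus keeping the accounting straight between the general and binomial schemes.
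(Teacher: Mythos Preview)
Your proposal is correct and follows essentially the same approach as the paper: both start from the parity expansion $\mathbf{1}_{A(n,m)} = 2^{-n}\sum_J (-1)^{\sum_{j\in J} Y_j}$, factor over the i.i.d.\ rows to obtain (\ref{pAnm}), and then compute $\Expn[(-1)^{S_{1,J}}]$ explicitly in each scheme (via a $\Bin(W,j/n)$ calculation in the binomial case and a hypergeometric calculation in the general case). The only cosmetic difference is that the paper obtains the form (\ref{pjk}) directly by swapping the roles of the random $r$-set $H$ and the fixed $j$-set $J$, whereas you first write down the other hypergeometric expression and then invoke the identity $\binom{j}{k}\binom{n-j}{r-k}/\binom{n}{r}=\binom{r}{k}\binom{n-r}{j-k}/\binom{n}{j}$; these are the same argument.
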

\begin{proof}
Event $A(n,m)$ occurs if and only if all the $Y_j$ are even, so
\begin{align*}
\Pr[A (n,m) ]  =
\Exp \prod_{j=1}^n \left( \frac{1 + (-1)^{Y_j} }{2} \right)
  = 2^{-n}
\sum_{J \subseteq [n] } \Exp \left[ (-1)^{\sum_{j \in J} Y_j} \right] ,
\end{align*}
where the latter sum is over subsets $J$ of $[n]$, including the empty set.
Since $\sum_{j \in J} Y_j = \sum_{i=1}^m S_{i,J}$ and $S_{1,J}, S_{2,J}, \ldots$ are i.i.d.,
 (\ref{pAnm}) follows.

Consider  the binomial allocation scheme.
In the binomial model,
$$
S_{1,J} =  \sum_{j \in J} X_{1j} \equiv \sum_{j \in J} Z_j  ~ ({\rm mod ~ 2}),
$$
where $(Z_1,\ldots,Z_n) $ has
 a multinomial $(W ; n^{-1}, \ldots, n^{-1})$ distribution
so that $\sum_{j \in J} Z_j$ has
 a $\Bin (W , |J|/n)$ distribution.
Recalling that if $\xi \sim {\rm Bin}(n,p)$ then
$\Exp [s^\xi]=(sp + (1-p))^n$,
we then obtain (\ref{pmulti}) from (\ref{pAnm}).

In the general
 scheme,
conditional on
$\sum_{j=1}^n X_{1j}=r$, the distribution of
$S_{1,J}$ is hypergeometric with parameters
$(n;|J|,r)$. We do not use the
 generating
function (see e.g.\ \cite[p.\ 17]{moran})
explicitly, but
apply the hypergeometric probability mass function
directly to
(\ref{pAnm}).

Let $H \subseteq [n]$
denote the set of values
of $j$ for which $X_{1 j} = 1$.
For $ r \in [n]$, we
write $\Exp_r$ for expectation in
the case where
 $\Pr [ W_n=r]=1$.
Instead of fixing
$J \subseteq [n]$
and choosing $H$ as a uniform random $r$-subset,
we obtain an exact formula
for $\Exp_r [ (-1)^{S_{1,J}} ]$ by fixing $j = |J|$
and a $r$-subset $H$, and selecting
$J$ uniformly from the $j$-subsets of $[n]$.
The probability $p_{j,r}$ that $S_{1,J}:= | H \cap J|$
is even is given by summing probabilities for
$|H \cap J| \in \{0, 2, 4, \ldots\}$,
giving the expression in (\ref{pjk}).
It follows that $\Exp_r [ (-1)^{S_{1,J}} ] = 2 p_{j,r} - 1$,
and hence
$$
\Expn [ (-1)^{S_{1,J}}] =
\sum_{r=1}^n
 (2 p_{j,r} -1)
 \Pr [ W_n=r] =
 2 p_{j}^{(n)} -1.
$$
Substitution of this into (\ref{pAnm}) gives (\ref{phyp}).
\end{proof}

\subsection{Asymptotics in the binomial model}
\label{sec:binlim}

The remaining parts of Section \ref{sec:occupancy}  are concerned with asymptotic analysis of the quantities in Lemma \ref{lem:nullp}. First we state a
result that will enable us to work primarily with even $m$, which has technical advantages.

\begin{lemma}
\label{oddlem}
Suppose that $W_n \tod W$ and $\Pr [ W \in 2\Z] >0$.  
Then for any $m > 3$,
\[ \log \Prn [ A (n ,m-3) ] + O (\log n) \leq \log \Prn [ A (n, m) ] \leq \log \Prn [ A (n ,m+3) ] + O (\log n) .\]
\end{lemma}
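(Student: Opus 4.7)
The plan is as follows. The two inequalities in the statement are logically equivalent: substituting $m \mapsto m-3$ in the upper bound yields the lower bound. So I focus on proving
\[
\log \Prn[A(n,m)] \leq \log \Prn[A(n,m+3)] + O(\log n).
\]
Partitioning the $m+3$ rows of $M(n,m+3)$ into the first $m$ and the last $3$, independence of these two groups of rows gives
\[
\Prn[A(n,m+3)] \geq \Prn[A(n,m)] \cdot q_n, \qquad q_n := \Prn[X_1 + X_2 + X_3 \equiv \0 \pmod{2}].
\]
Taking logarithms, it suffices to show that $\log q_n \geq -C\log n$ for some constant $C = C(\rho)$ and all sufficiently large $n$.

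To prove this polynomial lower bound on $q_n$, I would use the hypothesis $\Pr[W \in 2\Z] > 0$ to fix an integer $\ell \geq 1$ with $\Pr[W = 2\ell] > 0$. Since $W_n$ and $W$ are both integer-valued, the weak convergence $W_n \tod W$ upgrades to pointwise convergence of probability mass functions, so $\Pr[W_n = 2\ell] \geq c > 0$ for all sufficiently large $n$. For $n \geq 3\ell$, choose three disjoint $\ell$-subsets $B_1, B_2, B_3 \subseteq [n]$, and let $v_1, v_2, v_3 \in \{0,1\}^n$ be the vectors with supports $B_1 \cup B_2$, $B_2 \cup B_3$, and $B_1 \cup B_3$, respectively. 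Each $v_i$ has weight $2\ell$, and $v_1 + v_2 + v_3 \equiv \0 \pmod{2}$ because every element of $B_1 \cup B_2 \cup B_3$ lies in exactly two of the supports. Since the conditional distribution of $X_i$ given $w(X_i) = 2\ell$ is uniform over the weight-$2\ell$ vectors,
\[
q_n \geq \prod_{i=1}^3 \Prn[X_i = v_i] = \left(\frac{\Pr[W_n = 2\ell]}{\binom{n}{2\ell}}\right)^3 = \Omega(n^{-6\ell}),
\]
so $\log q_n \geq -6\ell \log n - O(1)$, as required.

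There is no deep step here; the construction of three low-weight cancelling vectors is elementary. What the lemma is really designed to address is \emph{why three}: a shift of $\pm 2$ preserves the parity of $m$ and so cannot serve the lemma's purpose of bridging odd and even row counts, while a shift of $\pm 1$ would require the single extra row to equal $\0$, which has probability $0$ since $W_n \geq 1$ almost surely. Three is therefore the smallest odd shift that admits a positive-probability cancellation, and the availability of that cancellation is exactly what the hypothesis $\Pr[W \in 2\Z] > 0$ guarantees: without some even weight in the support of $W$, three rows of guaranteed odd weight could never sum to $\0$.
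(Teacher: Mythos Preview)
Your proof is correct and follows essentially the same approach as the paper's: both reduce to showing that three extra rows form a hypercycle with probability at least $n^{-C}$, using an even value $r=2\ell$ in the support of $W$ (and hence of $W_n$ for large $n$). The paper is terser---it simply asserts a bound of $n^{-2r}$ for the hypercycle probability without writing down an explicit configuration---whereas you give the concrete three-vector construction and obtain $n^{-6\ell}=n^{-3r}$; either bound suffices, and your added remark on why a shift of three (rather than one or two) is needed is a nice clarification.
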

\begin{proof}
The fact that $\Pr[ W \in 2\Z] > 0$ and $W_n \tod W$
implies that there exist $\eps>0$ and $r \in 2\Z$ such that $\Pr[ W_n = r]>\eps$ for all $n$ large enough.
For any  $m  > 3$, suppose that $A(n,m -3)$
occurs. Then $A(n,m )$ will occur if the $3$ additional rows themselves constitute a hypercycle.
With probability at least $\eps^3$, these new rows each have $r$ units, and given this,
there is a probability at least $n^{-2r}$, say, that these units form a hypercycle.
In other words, $\log \Prn [ A(n,m ) ] \geq \log \Prn[ A(n,m -3) ] + O (\log n)$.
Applying this inequality twice,
once with  $m  +3$ in place of $m$,
gives the result.
\end{proof}

Recall that in general we assume $W_n \tod W$. Next we give an elementary lemma that confirms the binomial
model's place in this framework.

\begin{lemma}
\label{wnconv}
For $W$ a $\ZP$-valued random variable,   let $W^{\rm bin}_n$ be the number
of odd components in a multinomial $(W  ; n^{-1}, \ldots, n^{-1})$ random vector.
 Then $W^{\rm bin}_n \tod W$ as $n \to \infty$.
 \end{lemma}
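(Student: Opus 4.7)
The plan is to condition on the value of $W$ and to show that, for every fixed $k$, the conditional distribution of $W^{\rm bin}_n$ given $W=k$ concentrates at $k$ as $n\to\infty$. The key observation is that, given $W=k$, the random vector $(Z_1,\ldots,Z_n)$ describes the placement of $k$ balls independently and uniformly into $n$ urns. For fixed $k$, the classical birthday calculation gives that the probability that two or more balls share an urn is at most $\binom{k}{2}/n$, which tends to $0$. On the complementary event, the $k$ nonzero components of $(Z_1,\ldots,Z_n)$ all equal $1$ (which is odd), so the number of odd components is exactly $k$. Hence
\[
\Pr[W^{\rm bin}_n = k \mid W = k] \geq 1 - \binom{k}{2}/n \to 1 \quad \text{as } n\to\infty.
\]

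To upgrade this conditional statement to weak convergence when $W$ is itself random (and possibly unbounded), I would use the Portmanteau theorem. For any bounded continuous $f:\R\to\R$, conditioning on $W$ gives
\[
\Exp[f(W^{\rm bin}_n)] = \sum_{k=0}^\infty \Pr[W=k]\, \Exp[f(W^{\rm bin}_n) \mid W = k].
\]
The inner expectation is bounded by $\|f\|_\infty$ and, by the conditional concentration above, converges to $f(k)$ for each $k$. Dominated convergence (with dominating summable sequence $\|f\|_\infty \Pr[W=k]$) then yields $\Exp[f(W^{\rm bin}_n)] \to \sum_k \Pr[W=k] f(k) = \Exp[f(W)]$, establishing $W^{\rm bin}_n \tod W$.

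There is no real obstacle here; the argument is a routine birthday-problem collision bound combined with dominated convergence. The only point requiring any care is that we do not assume $W$ is bounded, so we cannot simply truncate, but dominated convergence handles this seamlessly because the summands are dominated by $\|f\|_\infty \Pr[W=k]$ uniformly in $n$.
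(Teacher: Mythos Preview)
Your proof is correct and takes essentially the same approach as the paper: both condition on $W=k$, use the birthday collision bound $\Pr[W^{\rm bin}_n \neq k \mid W=k] \leq \binom{k}{2}/n$, and then pass to the unconditional limit via dominated convergence. The paper phrases the coupling more tersely (deducing $\Pr[W^{\rm bin}_n \neq W] \to 0$ and hence convergence in probability), whereas you spell out the Portmanteau step explicitly, but the substance is identical.
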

\begin{proof}
Let $W$  and $W^{\rm bin}_n$ be coupled in the natural way.
Then for each $k \in \N$, by the union bound
$$
\Pr[ W_n^{\rm bin} \neq W \mid W=k] \leq \binom{k}{2} \frac{1}{n},
$$
which tends to zero as $n \to \infty$, and the result
follows easily from this.
\end{proof}

We will prove Theorem \ref{allothm} (in Section \ref{sec:alloproof})
by first showing that (\ref{allolim}) holds in the binomial setting,
using (\ref{pmulti})
and the Stirling approximation for the binomial coefficients as discussed in
Section \ref{sec:appendix}. Then we will extend this to the general setting using an approximation argument
described in Section \ref{sec:approx1}.
We start by proving a slightly more general statement than (\ref{allolim})  in the binomial case, which we will also need later in the proof of Theorem \ref{thm1}.

\begin{lemma}
\label{binlem}
Recall the definition of $R_\rho$ from (\ref{Rrhodef}). Then $R_\rho(\alpha)$ is continuous and nondecreasing as a function of $\alpha$.
Suppose that  
either (i) $m_n \in 2\Z$ for all $n$; or (ii) $\Pr [ W \in 2 \Z ] > 0$.
Suppose that there exist $\alpha_1, \alpha_2$ with $0 < \alpha_1 < \alpha_2 < \infty$ such that,
for all $n$ sufficiently large, $\alpha_1 < m_n /n < \alpha_2$. Then,
\begin{align}
\label{infsup} \limsup_{n \to \infty} n^{-1} \log \Prb [ A (n ,m_n) ] & \leq - R_\rho (\alpha_1) ; \nonumber\\
\liminf_{n \to \infty} n^{-1} \log \Prb [ A (n ,m_n) ] & \geq - R_\rho (\alpha_2) .\end{align}
In particular, if $m_n / n \to \alpha >0$, then
\begin{equation}
\label{binlim2}
 \lim_{n \to \infty} n^{-1} \log \Prb [ A (n ,m_n) ] = - R_\rho (\alpha) .\end{equation}
\end{lemma}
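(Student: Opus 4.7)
\medskip

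\noindent\textbf{Proof plan.} The starting point is the exact identity (\ref{pmulti}) from Lemma \ref{lem:nullp}, which expresses $\Prb[A(n,m)]$ as a weighted sum, over $j\in\{0,1,\dots,n\}$, of terms $2^{-n}\binom{n}{j}(\rho(1-2j/n))^m$. Writing $\gamma=j/n$ and applying Stirling's formula to $\binom{n}{j}$ (up to polynomial factors, $\exp(n H(\gamma))$ where $H(\gamma)=-\gamma\log\gamma-(1-\gamma)\log(1-\gamma)$), each term has exponential rate
\[
\phi_{m_n/n}(\gamma):=H(\gamma)-\log 2+\tfrac{m_n}{n}\log\rho(1-2\gamma).
\]
The definition (\ref{Rrhodef}) is precisely $-R_\rho(\alpha)=\sup_{\gamma\in[0,1/2]}\phi_\alpha(\gamma)$. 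The first assertion of the lemma follows by observing that $\alpha\mapsto -R_\rho(\alpha)$ is a supremum of affine functions of $\alpha$ (hence convex, and so continuous on $(0,\infty)$), each with slope $\log\rho(1-2\gamma)\le 0$, so $R_\rho$ is continuous and nondecreasing.

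\medskip

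For the \emph{upper bound} in (\ref{infsup}), bound the sum termwise by taking absolute values and using $|\rho(s)|\le\rho(|s|)$ for $s\in[-1,1]$ together with the symmetry $\binom{n}{j}=\binom{n}{n-j}$ to fold the sum into $j\le n/2$, i.e.\ $\gamma\in[0,1/2]$. Then use the Stirling upper bound $\binom{n}{j}\le\exp(nH(j/n))$ and, crucially, exploit $\log\rho(1-2\gamma)\le 0$ together with the hypothesis $m_n/n>\alpha_1$ to replace $m_n/n$ by $\alpha_1$ in the exponent. Bounding the $O(n)$ terms by $n+1$ times the maximum and taking logarithms yields $\log\Prb[A(n,m_n)]\le -nR_\rho(\alpha_1)+O(\log n)$, giving the claimed limsup.

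\medskip

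For the \emph{lower bound} in (\ref{infsup}) under case (i), even parity of $m_n$ makes every term $\binom{n}{j}(\rho(1-2j/n))^{m_n}$ nonnegative, so we may retain a single term as a lower bound. Picking $j=\lfloor\gamma n\rfloor$ for any $\gamma\in(0,1/2)$, the Stirling lower bound $\binom{n}{\lfloor\gamma n\rfloor}\ge cn^{-1/2}\exp(nH(\gamma))$ and the hypothesis $m_n/n<\alpha_2$ (combined with $\log\rho\le 0$) yield $\log\Prb[A(n,m_n)]\ge n\phi_{\alpha_2}(\gamma)+O(\log n)$. Optimising $\gamma$ over $(0,1/2)$ and using continuity of $\phi_{\alpha_2}$ to approach the supremum gives the desired $-R_\rho(\alpha_2)$. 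Under case (ii), $m_n$ may be odd and cancellation in (\ref{pmulti}) obstructs a direct term-retention argument; here we appeal to Lemma \ref{oddlem} (applicable because $W_n^{\rm bin}\tod W$ by Lemma \ref{wnconv} and $\Pr[W\in 2\Z]>0$) to compare $\log\Prb[A(n,m_n)]$ with $\log\Prb[A(n,m'_n)]$ for a nearby even integer $m'_n$ (shifting by $\pm 3$ flips parity), paying only an $O(\log n)$ cost; $m'_n/n$ still lies in $(\alpha_1,\alpha_2)$ for large $n$ (up to an infinitesimal adjustment) and case (i) applies. Finally, the two-sided estimate (\ref{binlim2}) follows by taking $\alpha_1\uparrow\alpha$ and $\alpha_2\downarrow\alpha$ and invoking continuity of $R_\rho$ already established.

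\medskip

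The main obstacle is the possibility of cancellation in the sum (\ref{pmulti}) when $m_n$ is odd and $\rho(1-2\gamma)$ takes negative values for some $\gamma\in(0,1/2)$, which rules out a naive term-selection argument for the lower bound. The parity-adjustment lemma is what makes case (ii) work. A secondary care point is the uniformity of the Stirling bounds near $\gamma=0$ and $\gamma=1/2$, which is handled by the standard two-sided estimate $\binom{n}{j}\in[cn^{-1/2},1]\cdot\exp(nH(j/n))$ valid for $1\le j\le n-1$, with the boundary cases $j\in\{0,n\}$ contributing at most constants.
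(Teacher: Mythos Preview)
Your proposal is correct and follows essentially the same route as the paper: the exact formula (\ref{pmulti}), termwise Stirling bounds via (\ref{chooseub2}) and (\ref{chooselb}), the inequality $|\rho(-s)|\le\rho(s)$ to fold into $\gamma\in[0,1/2]$, single-term retention for the lower bound under even $m_n$, and reduction of case (ii) to case (i) via Lemma~\ref{oddlem} combined with Lemma~\ref{wnconv}. The one genuine (minor) difference is your argument for continuity and monotonicity of $R_\rho$: you observe that $-R_\rho(\alpha)=\sup_{\gamma}\phi_\alpha(\gamma)$ is a supremum of functions affine in $\alpha$ with nonpositive slope $\log\rho(1-2\gamma)$, hence convex (so continuous) and nonincreasing, whereas the paper deduces continuity from Dini's theorem applied to the monotone family $\alpha\mapsto g_\alpha(\gamma)$ on the compact interval $[0,1/2]$. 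Both arguments are valid; yours is arguably cleaner and gives convexity of $R_\rho$ for free.
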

\begin{proof}
Suppose that $m_n/ n \in (\alpha_1, \alpha_2)$. First assume that the $m_n$ are even.
By (\ref{pmulti}) and Lemma \ref{pgf}(iii),
\begin{align*}
\Prb [A(n,m) ] & \leq (n+1) 2^{-n} \max_{0 \leq j \leq n}  \binom{n}{j} | \rho ( 1 -(2j/n) ) |^{m} \\
& \leq (n+1) 2^{-n} \sup_{\gamma \in [0,1/2]}  \binom{n}{\gamma n} ( \rho( 1 -2\gamma ) )^{m},\end{align*}
where we set $\binom{n}{x} = 0$ if $x$ is a not an integer in $\{0,1,\ldots, n\}$.
Using the upper bound on binomial coefficients  from the first inequality in (\ref{chooseub2}), we get
\[ \Prb [A(n,m_n) ] \leq (n+1) \sup_{\gamma \in [0,1/2]}  \left( 2 \gamma^\gamma (1-\gamma)^{1-\gamma} \right)^{-n}
 ( \rho( 1 -2\gamma ) )^{m_n} .\]
 By monotonicity, we then obtain
 \begin{equation}
 \label{eq45}
  n^{-1} \log \Prb [A(n,m_n) ] \leq
n^{-1} \log(n+1)
+ \log \sup_{\gamma \in [0,1/2]} g_{m_n/n} (\gamma),\end{equation}
where we have set
\[ g_\alpha ( \gamma) :=  \frac{ (
\rho ( 1-2\gamma ) )^\alpha }{
2 \gamma^\gamma (1-\gamma)^{1-\gamma}} ;\]
so with $R_\rho$ as defined at (\ref{Rrhodef}), $R_\rho (\alpha) = - \log \sup_{\gamma \in [0,1/2]} g_\alpha (\gamma)$.
Note that $g_\alpha(\gamma)$ is continuous in  $\gamma$ for $\alpha \geq 0$ and $\gamma \in [0,1/2]$,
and $g_\alpha (\gamma)$ is nonincreasing in $\alpha$;
this monotonicity implies, by Dini's theorem, that if $\alpha' \to \alpha$ monotonically then $g_{\alpha'}$ converges  uniformly to $g_\alpha$ on the compact
interval $[0,1/2]$. It follows that $\alpha \mapsto \sup_{\gamma \in [0,1/2]} g_\alpha (\gamma)$
is continuous as a function of $\alpha >0$, and is also nonincreasing in $\alpha$.
In particular, this shows that $R_\rho(\alpha)$ is continuous   and nondecreasing in $\alpha$,
as claimed in the lemma.
Moreover, we obtain from (\ref{eq45}) and the fact that $m_n /n > \alpha_1$  that
 \[ \limsup_{n \to \infty}  n^{-1} \log \Prb [A(n,m_n) ] \leq \log \sup_{\gamma \in [0,1/2]} g_{\alpha_1} (\gamma) , \]
which gives the first inequality in  
 (\ref{infsup}). 

For the second inequality,
we have from (\ref{pmulti}) and (\ref{chooselb}) that for any integer $i_n \leq n/2$,
\begin{align*}
\Prb [A (n,m_n) ]
  \geq 2^{-n} \binom{n}{i_n} ( \rho( 1 - (2i_n /n)) )^{m_n}
  \geq \re^{-1/6} \left( \frac{n}{2\pi i_n (n-i_n)} \right)^{1/2} \!\!
 ( g_{m_n/n} (i_n /n) )^{n}  ,\end{align*}
 using the fact that $m_n$ is even.
 Then, since $m_n /n < \alpha_2$,
 \[ \Prb [A (n,m_n) ]
  \geq  \re^{-1/6} \left( \frac{n}{2\pi i_n (n-i_n)} \right)^{1/2}
 ( g_{\alpha_2} (i_n /n) )^{n} .\]
Now use continuity of $g_\alpha$
to choose a
 sequence of integers $i_n \leq n/2$, $n \in \N$,
such that
$g_{\alpha_2} ( i_n /n) \to \sup_{\gamma \in [0,1/2]} g_{\alpha_2} (\gamma)$,
with 
$i_n \to \infty$ and $n-i_n \to \infty$ as $n \to \infty$.
The lower bound in (\ref{infsup}) follows, for   $m_n$ even.

The results in (\ref{infsup}) extend to the case of odd $m_n$ with $\Pr [ W \in 2\Z ] >0$ by Lemma \ref{oddlem}, which is applicable here by Lemma \ref{wnconv}.
Finally, (\ref{binlim2}) follows from (\ref{infsup}) on taking $\alpha_1 = \alpha - \eps$ and $\alpha_2 = \alpha + \eps$,
for arbitrary $\eps>0$, and using the continuity of $R_\rho$.
\end{proof}

\subsection{Approximation by the binomial model}
\label{sec:approx1}

The exact formula (\ref{pmulti}) is simpler
to work with than the more complicated exact formula (\ref{phyp}),
but intuition suggests that the asymptotics of any
of the models in the class with $W_n \tod W$ should be
similar. In this section we quantify this intuition.

\begin{lemma}
\label{approxlem1}
Suppose that $W_n \tod W$ and either (i) $m_n \in 2\Z$ for all $n$; or (ii) $\Pr [ W \in 2 \Z ] > 0$.
Suppose that there exist $\alpha_1, \alpha_2$ with $0 < \alpha_1 < \alpha_2 < \infty$ and  $n_0 \in \N$
such that $\alpha_1 < m_n /n < \alpha_2$ for all $n \geq n_0$.
Then, uniformly over sequences $m_n$ satisfying the given conditions,
\begin{align}
\lim_{n \to \infty} n^{-1} \left|  \log  \Prn
[ A(n, m_n) ] -
\log \Prb  [ A(n, m_n)] \right| = 0.
\label{approxA}
\end{align}
In particular, if $m_n /n \to \alpha >0$,
\[ \lim_{n \to \infty} n^{-1}   \log  \Prn
[ A(n, m_n) ]  = \lim_{n \to \infty} n^{-1} \log \Prb  [ A(n, m_n)]  . \]
\end{lemma}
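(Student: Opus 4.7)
The plan is to reduce both log-probabilities in (\ref{approxA}) to the same variational expression via the exact formulas of Lemma~\ref{lem:nullp} and Stirling's approximation, then control the difference through a uniform approximation of the hypergeometric parity-expectation $2p_j^{(n)}-1$ by the binomial one $\rho(1-2j/n)$. By Lemmas~\ref{oddlem} and~\ref{wnconv} (the latter embeds the binomial scheme within the $W_n\tod W$ framework), we may shift $m_n$ by $O(1)$ at negligible cost in $n^{-1}\log$, and hence assume $m_n$ is even, so every summand in (\ref{pmulti}) and (\ref{phyp}) is non-negative. Since each sum has only $n+1$ terms, the Stirling bounds for $\binom{n}{j}$ used in the proof of Lemma~\ref{binlem} give
$$
n^{-1}\log\Prn[A(n,m_n)] = \log\sup_{\gamma\in[0,1]} L_n^{\mathrm{gen}}(\gamma) + O(n^{-1}\log n),
$$
with $L_n^{\mathrm{gen}}(\gamma):=|2p_{\lfloor\gamma n\rfloor}^{(n)}-1|^{m_n/n}/[2\gamma^\gamma(1-\gamma)^{1-\gamma}]$, and analogously for $\Prb$ with $L_n^{\mathrm{bin}}(\gamma)$ defined by replacing $2p_{\lfloor\gamma n\rfloor}^{(n)}-1$ by $\rho(1-2\gamma)$.

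The central step is the uniform approximation
$$
\Delta_n := \max_{0\leq j\leq n} \bigl|(2p_j^{(n)}-1) - \rho(1-2j/n)\bigr| \longrightarrow 0 \quad \text{as } n\to\infty.
$$
Conditioning on $W_n$ in (\ref{pjk}), $2p_j^{(n)}-1 = \Exp[(-1)^{\mathrm{Hyp}(n,j,W_n)}]$, and a direct sampling-without-versus-with-replacement comparison yields, for each fixed $r$, the bound $|\Exp[(-1)^{\mathrm{Hyp}(n,j,r)}] - (1-2j/n)^r|=O(r^2/n)$ uniformly in $j$. Splitting the expectation in $W_n$ at a cutoff $K$, using tightness of $\{W_n\}$ (which follows from $W_n\tod W$) and the uniform convergence $\sup_{s\in[0,1]}|\rho_n(s)-\rho(s)|\to 0$ (Dini's theorem, since $\rho_n,\rho$ are continuous and monotone on $[0,1]$), we obtain
$$
\Delta_n \leq O(K^2/n) + 2\Pr[W_n>K] + 2\Pr[W>K],
$$
and taking $K=K_n\to\infty$ slowly enough that $K_n^2/n\to 0$ yields $\Delta_n\to 0$.

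To convert $\Delta_n\to 0$ into convergence of the suprema, fix $c\in(0,1)$ with $c^{\alpha_1}<1/2$ and split $\gamma\in[0,1]$ at the set $G_c:=\{\gamma:\rho(|1-2\gamma|)\geq c\}$. On $G_c$, for $n$ large (so $\Delta_n<c/2$), both $|2p_{\lfloor\gamma n\rfloor}^{(n)}-1|$ and $|\rho(1-2\gamma)|$ are at least $c/2$, so $\log L_n^{\mathrm{gen}}(\gamma)-\log L_n^{\mathrm{bin}}(\gamma) = (m_n/n)\log(1+O(\Delta_n/c))= o(1)$ uniformly on $G_c$. Off $G_c$, both $L_n^{\mathrm{gen}}(\gamma)$ and $L_n^{\mathrm{bin}}(\gamma)$ are at most $(c+\Delta_n)^{m_n/n}\leq c^{\alpha_1}(1+o(1))<1/2$, while at $\gamma=0$ both $L_n^{\mathrm{gen}}(0)$ and $L_n^{\mathrm{bin}}(0)$ equal $1/2$; hence both suprema are attained inside $G_c$, giving $|\log\sup L_n^{\mathrm{gen}}-\log\sup L_n^{\mathrm{bin}}|\to 0$, which yields (\ref{approxA}). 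The main obstacle is the possible unboundedness of $W_n$: the pointwise error $O(W_n^2/n)$ is not uniform in $W_n$, and the truncation-plus-tightness argument is the essential ingredient that makes $\Delta_n\to 0$ robust.
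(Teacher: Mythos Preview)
Your approach follows the same architecture as the paper's: compare the two exact formulas of Lemma~\ref{lem:nullp} termwise, show the base terms differ by a quantity $\Delta_n\to 0$ uniformly in $j$, and split according to the size of the base term. The implementations differ in two places worth noting. For the key estimate $\Delta_n\to 0$, the paper avoids any hypergeometric computation or truncation: it couples $W_n$ and $W_n^{\rm bin}$ via Skorokhod so that $\Pr[W_n\neq W_n^{\rm bin}]\to 0$, and observes that conditionally on $\{W_n=W_n^{\rm bin}=r\}$ the row is uniform of weight $r$ in \emph{both} models, whence $|\Expn[(-1)^{S_{1,J}}]-\Expb[(-1)^{S_{1,J}}]|\leq 2\Pr[W_n\neq W_n^{\rm bin}]$ in one line. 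Second, the paper never passes to a variational $\sup_\gamma$ formulation; it stays with the sums, derives $\Prn[A]=\Prb[A]\,\re^{\eps_n n}+O(\re^{(\eps-1/\eps)n})$ with $|\eps_n|<\eps$, and then invokes the lower bound on $\Prb[A]$ from Lemma~\ref{binlem} to absorb the remainder.

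Two repairs your sketch needs. Your set $G_c$ must be $\{\gamma:|\rho(1-2\gamma)|\geq c\}$, not $\{\gamma:\rho(|1-2\gamma|)\geq c\}$: for $\gamma>1/2$ one can have $\rho(|1-2\gamma|)$ large while $|\rho(1-2\gamma)|=0$ (e.g.\ $\rho(s)=(s^2+s^3)/2$ at $\gamma=1$), so your claim that both base terms are at least $c/2$ on $G_c$ fails as written; with the corrected definition the rest of your splitting argument goes through unchanged. Also, your displayed bound for $\Delta_n$ is missing a term capturing $|\rho_n-\rho|$ (equivalently $\sum_{r\leq K}|\Pr[W_n=r]-\Pr[W=r]|$), and Dini's theorem is not the right citation---Dini requires monotonicity in $n$, not in $s$. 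What you need is that integer-valued $W_n\tod W$ forces $\ell^1$-convergence of the point masses (Scheff\'e), hence $\sup_{|s|\leq 1}|\rho_n(s)-\rho(s)|\to 0$; alternatively, the paper's coupling gives this directly. These are easily fixed and do not affect the soundness of your strategy.
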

\begin{proof}
Denote the weight of row 1 by $W_n$ in the general allocation scheme,
and by 
  $W^{\rm bin}_n$  in the binomial scheme.
Then
$W_n \tod W$ and $W^{\rm bin}_n \tod W$, and we can work in a probability space
where $\Pr [ W_{n } \neq W^{\rm bin}_{n } ] \to 0$.
 As in Section \ref{sec:exact}, set $S_{1,J} = \sum_{j \in J} X_{1j}$;
 on $\{W_{n } = W^{\rm bin}_{n }\}$, the (conditional) law
 of $S_{1,J}$ is the same as in the binomial model.
 Thus
\begin{equation}
\label{delta}
\sup_{J \subseteq [n]}
 \left| \Expn [ (-1)^{S_{1,J}} ] - \Expb [ (-1)^{S_{1,J}} ] \right|
\leq 2 \Pr  [ W_{n} \neq W^{\rm bin}_{n} ] \to 0.\end{equation}

First suppose that the $m_n$ are even.
By (\ref{pAnm}) with (\ref{pmulti}) and (\ref{delta}),
there exists a triangular array of numbers
$(\delta_{j,n}, j \in [n] \cup\{0\}, n \in \N)$
satisfying $\max_{0 \leq j \leq n} | \delta_{j,n}| \to 0$
as $n \to \infty$, and
\begin{align}
\label{pdiff}
 \Prn  [A (n,m_n) ] = 2^{-n} \sum_{j=0}^n \binom{n}{j}
 \left( \rho (1 - (2j/n)) + \delta_{j,n}
\right)^{m_n}  .
\end{align}

Let $\eps >0$ and choose $K>1$ large enough so that
$\log(1-K^{-1}) > - \eps / \alpha_2$ and
$\log(1+K^{-1}) <  \eps / \alpha_2$.
Then choose $\delta >0$ such that
$(K+1) \delta < \exp \{- 1/(\alpha_1 \eps)\}$.
Finally assume $n$ is large enough so that
$\sup_{j \in [ n] \cup \{0\}} |\delta_{j,n}| \leq \delta$
and $\alpha_1 < (m_n/n) <   \alpha_2$.

We split the sum in (\ref{pdiff})
into two parts, depending on
the size of $\rho (1 - (2j/n))$.
First suppose that $|\rho (1 - (2j/n))| \leq K \delta$.
In this case
\begin{equation}
\label{smallj}
\left| \left( \rho (1 - (2j/n)) +\delta_{j,n}
\right)^{m_n} \right|
\leq ((K+1)\delta )^{m_n}
\leq
\exp \{- m_n/(\alpha_1 \eps)\} \leq \exp \{ -n/\eps\},
\end{equation}
and similarly,
\begin{equation}
\label{smallj2}
 \left( \rho (1 - (2j/n)) \right)^{m_n}
\leq \exp \{ -n/\eps \}.
\end{equation}
It follows from (\ref{pdiff}) and (\ref{smallj}) that
\begin{align}
\label{diff1}
 \Prn  [ A(n,m_n) ] & =
 2^{-n} \sum_{j : | \rho ( 1 - (2j/n)) | > K \delta } \binom{n}{j}
 \left( \rho (1 - (2j/n)) +\delta_{j,n}
\right)^{m_n}  \nonumber\\
& ~~~ +
O \left( \exp \{-n/\eps
\} \right) .
\end{align}
Now suppose that $| \rho (1 - (2j/n)) | > K \delta$.
In this case
\begin{align*}
 \left( \rho (1 - (2j/n)) +\delta_{j,n}
\right)^{m_n}
& = \left( \rho (1- (2j/n)) \right)^{m_n}
\left(1 +  \theta_{j,n} K^{-1} \right)^{m_n} ,
\end{align*}
where $|\theta_{j,n}| \leq 1$.
By the choice of $K$,
$ \re^{-\eps/\alpha_2} < 1 + \theta_{j,n} K^{-1} < \re^{\eps/\alpha_2}$,
and hence
$$
\exp \{ -\eps n \} < (1 + \theta_{j,n} K^{-1})^{m_n} < \exp \{ \eps n \}.
$$
Therefore
\begin{align*}
\left( \rho (1 - (2j/n)) +\delta_{j,n} \right)^{m_n}
& = \left( \rho (1- (2j/n)) \right)^{m_n} \exp \{ \eps_{j,n} n \} ,
 \end{align*}
where $|\eps_{j,n}| < \eps$.
Hence
for the sum on the right-hand side of (\ref{diff1}),
there exists $\eps_n$ with $|\eps_n|< \eps$ such that
\begin{align*}
 ~~ &  2^{-n} \sum_{j :  | \rho ( 1 - (2j/n)) | > K \delta } \binom{n}{j}
 \left( \rho (1 - (2j/n)) +\delta_j \right)^{m_n} \\
 & = 2^{-n} \exp \{ \eps_n n \} \sum_{j : |\rho ( 1 - (2j/n)) | > K \delta } \binom{n}{j}
 \left( \rho (1 - (2j/n)) \right)^{m_n} ,
 \end{align*}
 using the assumption that $m_n$ is even so
all the terms in the sum are nonnegative.
 Then by (\ref{smallj2}) and a similar argument to (\ref{diff1}), the last displayed quantity
 is equal to
$\Prb [ A(n,m_n)] \exp \{ \eps_n n \} +   O \left( \exp \{ (\eps - \eps^{-1}) n   \} \right)$.
Combining this with (\ref{diff1}) we obtain
\begin{equation}
\label{eq0}
 \Prn  [ A(n,m_n) ] = \Prb  [ A(n,m_n) ] \exp \{ \eps_n n \} +  O \left( \exp \{ (\eps -  \eps^{-1} ) n   \} \right) ,\end{equation}
uniformly in $n$ (and $m_n$), the implicit constants depending on $\alpha_1$ and $\alpha_2$.

It follows from (\ref{eq0})
that
\[ \log  \Prn  [ A(n,m_n) ] = \log \Prb  [ A(n,m_n) ] + \eps_n n + \log \left( 1 + \frac{\Delta_n}{ \Prb  [ A(n,m_n) ] \exp \{ \eps_n n \} } \right) ,\]
where $\Delta_n = O \left( \exp \{ (\eps -  \eps^{-1} ) n   \} \right)$ is the final term in (\ref{eq0}).
By Lemma \ref{binlem}, we have that $\Prb  [ A(n,m_n) ] \geq \exp \{ - n R_\rho (\alpha_2) -\eps n \}$, for all $n$ large enough.
So we may take $\eps >0$ small enough so that
\[ \log \left( 1 + \frac{\Delta_n}{ \Prb  [ A(n,m_n) ] \exp \{ \eps_n n \} } \right)  = O ( \exp \{ - n \} ) ,\]
say. Hence
\[  n^{-1} \log  \Prn  [ A(n,m_n) ] = n^{-1} \log \Prb  [ A(n,m_n) ] + \eps_n   + o(1) .\]
Since $|\eps_n| \leq \eps$ and $\eps >0$ was arbitrary, (\ref{approxA}) follows
in the case of even $m_n$. In the other case, Lemma \ref{oddlem} yields the same conclusion.
The final statement in the lemma then follows from Lemma \ref{binlem}, which says that $\lim_{n\to\infty} n^{-1} \Prb [A(n,m_n)]$ exists
in $(0,\infty)$
when $m_n /n \to \alpha >0$.
\end{proof}

\subsection{Proofs of Theorem \ref{allothm} and Proposition \ref{multi}}
\label{sec:alloproof}

Now we can complete the proofs  of   Theorem \ref{allothm} and Proposition \ref{multi}.

\begin{proof}[Proof of Theorem \ref{allothm}.]
The theorem is now a consequence of Lemmas \ref{binlem} and \ref{approxlem1}.
\end{proof}

\begin{proof}[Proof of Proposition \ref{multi}.]
Set
\begin{equation}
\label{fdef} f_\alpha (\gamma ) :=  \log
\left(  \frac{(1-2 \gamma)^\alpha}{2 \gamma^{\gamma}
(1-\gamma)^{1-\gamma}}  \right)
.\end{equation}
In the case $\rho(s) = s$, it follows from Theorem \ref{allothm} that
$n^{-1} \log   \pi_n(m_n) \to \sup_{\gamma \in [0,1/2]} f_\alpha (\gamma)$.
 Proposition \ref{multi}  will follow once we prove that, setting $\alpha = \lambda \tanh \lambda$,
\begin{align}
\sup_{\gamma \in [0,1/2]}  f_\alpha (\gamma)
=  -( \lambda \tanh \lambda ) ( 1 - \log (\tanh \lambda ))
+ \log ( \cosh \lambda ).
\label{sup1}
\end{align}
Note that $f_\alpha (\gamma) \to -\infty$
as $\gamma \uparrow 1/2$.
 Differentiating (\ref{fdef}) gives,
for $\gamma \in (0,1/2)$,
\[ \frac{\ud }{\ud \gamma} f_\alpha (\gamma) = - \frac{2\alpha}{1-2\gamma} + \log \left( \frac{1-\gamma}{\gamma} \right) ,\]
which is zero at $\gamma_1 := \gamma_1 (\alpha) \in (0,1/2)$ defined implicitly
in terms of $\alpha$ by
\begin{equation}
\label{gamma1}
 \alpha = \frac{1}{2} (1 -2\gamma_1) \log \left( \frac{1-\gamma_1}{\gamma_1} \right).\end{equation}
 One can verify that
  for $\alpha >0$ (\ref{gamma1})  defines a unique stationary value
$\gamma_1 \in (0,1/2)$ which is a local maximum, since
 for $\gamma_1 \in (0,1/2)$ the right-hand side
of (\ref{gamma1}) is positive, continuous, and strictly decreasing as a function of $\gamma_1$,
vanishing at $\gamma_1 =1/2$; this local maximum is indeed the   maximum
of $f_\alpha (\gamma)$ for $\gamma \in [0,1/2]$ since $f'_\alpha (\gamma) \to \infty$ as $\gamma \downarrow 0$ (and also $f''_\alpha(\gamma_1) < 0$).

Setting $\lambda = \frac{1}{2} \log \left( \frac{1-\gamma_1}{\gamma_1} \right)$ we see
that
$\lambda \tanh \lambda = \alpha$ as given by (\ref{gamma1}),
since we get $\tanh \lambda = 1 -2\gamma_1$.
 To verify (\ref{sup1})
we need to express $f_\alpha (\gamma_1)$ in terms of $\lambda$ to get the
expression on the right-hand side of (\ref{sup1}). We have
\begin{align*} f_\alpha (\gamma_1) & = \alpha \log (1 -2\gamma_1)
- \log 2 - \gamma_1 \log \gamma_1 - (1-\gamma_1) \log (1-\gamma_1) \\
& = ( \lambda \tanh \lambda ) \log \tanh \lambda + \log \cosh \lambda
+ ((1/2) - \gamma_1 ) \log \gamma_1 + (\gamma_1 - (1/2)) \log (1-\gamma_1) ,\end{align*}
where we have used the fact that
 $\log \tanh \lambda = \log (1 -2\gamma_1)$
and $\log \cosh \lambda = -\frac{1}{2} \log ( 1 - \tanh^2 \lambda )
= - \log 2 - \frac{1}{2} \log \gamma_1 - \frac{1}{2} \log (1-\gamma_1)$.
Collecting the terms involving $\gamma_1$ in the last displayed equation,
we see that they simplify to $-\lambda \tanh \lambda = -\alpha$ as given
by (\ref{gamma1}), so
we verify (\ref{sup1}).
 \end{proof}

\subsection{Alternative proof of Proposition \ref{multi} via Poissonization}
\label{sec:poisson}

We give an alternative proof of Proposition \ref{multi}
based on a Poissonization device (as used
by Kolchin in his proof of Theorem 2 in \cite{kolchin})
and large deviations
arguments of a slightly different flavour from those in the proof above.
The proof in this section is direct, avoiding the general Theorem \ref{allothm},
but does use instead some relatively deep local limit theory.
The  following result can be found
for example in  \cite{kolchin}.

\begin{lemma}
\label{poissoneven}
Suppose that $Z_1,Z_2, \ldots$ are independent Poisson random variables with mean $\mu >0$.
Let $Z_1^E, Z_2^E,\ldots$ be i.i.d.,
where the law of $Z_1^E$ is the same as the conditional law of $Z_1$, given that $Z_1$ is even:
$\Pr [ Z_1^E = k ] = \Pr [ Z_1 = k \mid Z_1 \in 2 \Z ]$.
Then
\begin{equation}
\label{pi2}
\pi_n (m) =
\frac{\Pr \left[Z_1^E + \cdots  +Z_n^E = m\right]}
{\Pr \left[Z_1 + \cdots  +Z_n = m\right]}
\left( \frac{1 + \re^{-2 \mu }}{2} \right)^n.
\end{equation}
\end{lemma}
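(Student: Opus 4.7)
The plan is to use the standard Poissonization trick: if $Z_1,\ldots,Z_n$ are i.i.d.\ Poisson$(\mu)$, then conditional on $Z_1+\cdots+Z_n=m$ the vector $(Z_1,\ldots,Z_n)$ is multinomial with parameters $(m;n^{-1},\ldots,n^{-1})$, independently of the value of $\mu$. Consequently
\[
\pi_n(m) \;=\; \Pr\left[ Z_1,\ldots,Z_n \text{ all even} \,\big|\, Z_1+\cdots+Z_n=m \right] \;=\; \frac{\Pr[Z_1,\ldots,Z_n \text{ all even and } \sum_j Z_j = m]}{\Pr[\sum_j Z_j = m]}.
\]
This reduction is the first step, and it is the reason the formula (\ref{pi2}) is independent of the choice of $\mu$.

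The second step is to factor the numerator. By independence of the $Z_j$,
\[
\Pr\left[Z_1,\ldots,Z_n \text{ all even and } \textstyle\sum_j Z_j = m\right]
= \sum_{\substack{k_1+\cdots+k_n=m\\ k_j \in 2\Z_+}} \prod_{j=1}^n \Pr[Z_j = k_j].
\]
For even $k$ we have $\Pr[Z_j=k]=\Pr[Z_j \text{ even}]\cdot\Pr[Z_j^E=k]$ by the definition of $Z_j^E$, so pulling out the $n$ factors of $\Pr[Z_1 \text{ even}]$ turns the remaining sum into $\Pr[Z_1^E+\cdots+Z_n^E=m]$.

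The third step is the easy computation
\[
\Pr[Z_1 \text{ even}] \;=\; e^{-\mu}\sum_{k\in 2\Z_+}\frac{\mu^k}{k!} \;=\; e^{-\mu}\cosh\mu \;=\; \frac{1+e^{-2\mu}}{2},
\]
which supplies exactly the factor appearing in (\ref{pi2}). Combining the three steps yields the claimed identity. There is no real obstacle here; the only thing to watch is that the argument is a genuine identity (valid for every $\mu>0$), so the free parameter $\mu$ can later be chosen optimally when (\ref{pi2}) is deployed in the alternative proof of Proposition \ref{multi}, with the Poisson local limit theorem handling the denominator $\Pr[\sum_j Z_j = m]$ and a corresponding local limit statement for the even-conditioned variables handling the numerator.
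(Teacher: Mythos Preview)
Your proof is correct and follows essentially the same approach as the paper's own proof: both use the Poisson--multinomial relationship to write $\pi_n(m) = \Pr[\text{all } Z_j \text{ even} \mid \sum_j Z_j = m]$, then factor the joint probability $\Pr[\text{all even},\ \sum_j Z_j = m]$ into $\Pr[Z_1 \text{ even}]^n \cdot \Pr[\sum_j Z_j^E = m]$, and finally compute $\Pr[Z_1 \text{ even}] = (1+\re^{-2\mu})/2$. The only cosmetic difference is that the paper invokes Bayes' Theorem by name while you write out the convolution sum explicitly before factoring.
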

\begin{proof}
By the well-known relationship
between the Poisson and multinomial distributions (see e.g.\ \cite[p.\ 140]{kolchin} or \cite[p.\ 15]{kolchinbook}), and
Bayes' Theorem,
\begin{align*}
\pi_n(m) & = \Pr \left[Z_1\in 2 \Z, \ldots , Z_n\in 2 \Z \mid Z_1 + \cdots  +Z_n = m\right]
\\
 & = \frac{ \Pr [   Z_1 + \cdots + Z_n = m \mid Z_1 \in 2\Z, \ldots, Z_n \in 2\Z  ] }
{\Pr [ Z_1 + \cdots + Z_n = m] }  \cdot \Pr [ Z_1 \in 2\Z ]^n ,
\end{align*}
which with the expression for $\Pr [ Z_1 \in 2\Z ]$ in Lemma \ref{lem4}(ii) yields (\ref{pi2}).
\end{proof}

\begin{lemma}
\label{local}
Let $X_1, X_2,\ldots$ be an i.i.d.\ sequence of $\Z$-valued random
  variables, $S_n := X_1 + \cdots + X_n$, and $(x_n)_{n \in \N}$
   a sequence
  of even integers.  Suppose that
$\Exp [\re^{tX_1}] < \infty$ for some $t>0$,
$\Pr[X_1 =0 ] \wedge \Pr[X_1 =2] >0$,
  and  $x_n = n\Exp [ X_1] + o(n)$. Then
\[
\lim_{n \to \infty} n^{-1} \log \Pr [S_n = x_n ] = 0.
\]
\end{lemma}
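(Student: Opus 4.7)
The plan is to establish matching bounds. The upper bound $\limsup_{n\to\infty} n^{-1} \log \Pr[S_n = x_n] \leq 0$ is immediate since $\Pr[S_n = x_n] \leq 1$, so only the matching lower bound $\liminf_{n\to\infty} n^{-1} \log \Pr[S_n = x_n] \geq 0$ requires work. The idea is to exploit the structural hypothesis $\Pr[X_1 = 0] \wedge \Pr[X_1 = 2] > 0$ via a Binomial-thinning decomposition that reduces the problem to a local deviation estimate for a Binomial in its bulk.

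Let $p_0 := \Pr[X_1 = 0]$, $p_2 := \Pr[X_1 = 2]$, $q := p_2/(p_0 + p_2)$, and (in the non-trivial case $p_0+p_2 < 1$) $\mu' := \Exp[X_1 \mid X_1 \notin \{0, 2\}]$. Set $I_n := \{i \leq n : X_i \in \{0, 2\}\}$, $N_n := |I_n|$ and $T := \sum_{i \notin I_n} X_i$. Conditionally on $I_n$ and $(X_i)_{i \notin I_n}$, the values $(X_i)_{i \in I_n}$ are i.i.d.\ on $\{0, 2\}$ with $\Pr[X_i = 2] = q$, so $S_n = T + 2 J$ with $J \sim \Bin(N_n, q)$ and
\[\Pr[S_n = x_n \mid I_n, (X_i)_{i \notin I_n}] = \Pr[\Bin(N_n, q) = (x_n - T)/2]\]
whenever $(x_n - T)/2 \in \{0, \ldots, N_n\}$. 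The elementary identity $\Exp[X_1] = 2 p_2 + (1 - p_0 - p_2) \mu'$ rearranges, with $n_1 := n(p_0 + p_2)$, to $n \Exp[X_1] = (n - n_1)\mu' + 2 n_1 q$, so when $N_n \approx n(p_0 + p_2)$ and $T \approx (n - N_n)\mu'$ the target $(x_n - T)/2$ lies within $o(n)$ of $N_n q$.

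Concentration forces the conditioning into this regime with probability $1 - o(1)$: $N_n$ concentrates about $n(p_0+p_2)$ by Hoeffding, and $T$ about $(n - N_n)\mu'$ by Chebyshev (with finite variance inherited from the exponential moment hypothesis). The standard Binomial lower bound
\[\Pr[\Bin(k, q) = j] \geq \frac{1}{k + 1} \exp\bigl(-k \, D(j/k \,\|\, q)\bigr),\]
obtained from Stirling applied to $\binom{k}{j}$ with $D(\cdot\,\|\,\cdot)$ the Kullback--Leibler divergence, combined with $D(j/k \,\|\, q) = O\bigl(((j/k) - q)^2\bigr) = o(1)$ on the concentration event, yields $\Pr[\Bin(N_n, q) = (x_n - T)/2] = \exp(-o(n))$. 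Hence $\Pr[S_n = x_n] \geq \exp(-o(n))$, which is exactly what is required.

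The main technical obstacle will be handling parity: $(x_n - T)/2$ must be an integer, i.e., $T$ must have the same parity as $x_n$ (even by hypothesis). When $X_1$ is supported on $2\Z$ (lattice span $d = 2$) this is automatic. Otherwise, a short characteristic function computation shows $\Pr[T \equiv 0 \pmod{2}] \to 1/2$, so conditioning further on this parity event costs only a constant factor, which is negligible on the logarithmic scale.
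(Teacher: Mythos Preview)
Your argument is correct and takes a genuinely different route from the paper's proof. The paper simply invokes Richter's local central limit theorem (from Ibragimov--Linnik) to obtain the sharper asymptotic $\Pr[S_n = x_n] = \Theta\bigl(n^{-1/2}\exp\{-\tfrac12 y_n^2(1+O(n^{-1/2}y_n))\}\bigr)$ with $y_n = (x_n - n\mu)/(\sigma\sqrt{n}) = o(n^{1/2})$, from which $\exp\{o(n)\}$ is immediate. Your approach instead exploits the specific hypothesis $\Pr[X_1=0]\wedge\Pr[X_1=2]>0$ combinatorially: the Binomial-thinning decomposition reduces the problem to hitting a point in the bulk of a $\Bin(N_n,q)$, which is handled by Stirling together with elementary concentration for $N_n$ and $T$. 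The paper's proof is shorter and yields a finer estimate, but relies on a relatively deep external result; your proof is longer but entirely self-contained, needing only Stirling's formula, Hoeffding/Chebyshev, and the parity computation. Both approaches use the finite-variance consequence of the moment hypotheses in the same way (you for Chebyshev on $T$, the paper to define $\sigma$), so neither has an advantage there.
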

\begin{proof}
Write $\mu = \Exp [X_1]$ and $\sigma^2 = \Var [X_1]$,  which is finite and positive
 by the conditions in the lemma.
Write $Y_i = X_i - n\mu$ and $y_n = n^{-1/2} \sigma^{-1} (x_n - n\mu)$, so
$\Exp [ Y_i ] =0$, $\Exp [ Y_i^2] =\sigma^2$, and $y_n = o(n^{1/2})$.
If $Z_n = n^{-1/2} \sigma^{-1} \sum_{i=1}^n Y_i$,
Richter's
 local central limit theorem \cite[Chapter 7, \S\S 1 and 4]{il} tells us that
\[ \Pr [S_n = x_n] = \Pr [ Z_n = y_n  ] = \Theta \left( n^{-1/2} \exp \left\{ - \frac{1}{2} y_n^2 \left(1 + O (n^{-1/2} y_n ) \right) \right\} \right) ,\]
which is $\exp \{ o (n) \}$, since $y_n = o(n^{-1/2})$.
\end{proof}

\begin{proof}[Second proof of Proposition \ref{multi}.]
From Lemma \ref{poissoneven},  $n^{-1} \log \pi_n (m)$ can be expressed as
\begin{align}
\label{eq50}
 n^{-1} \log \Pr [Z_1^E + \cdots + Z_n^E = m]
 - n^{-1} \log \Pr [Z_1 + \cdots + Z_n = m]
+ \log \cosh \mu - \mu .\end{align}
By assumption, $m = m_n$ is such that $m_n/n \to \alpha >0$.
The proof proceeds by choosing $\mu$ so that
   $\Exp[Z_1^E] = \alpha$;
then the first term   in (\ref{eq50})
vanishes in the limit
by Lemma \ref{local}, and the proof of the theorem
 then reduces to evaluating the other logarithmic rate.

We choose $\mu$ so that  $\Exp [Z_1^E] = \alpha$; by Lemma
\ref{lem4}(ii) this means $\mu \tanh \mu = \alpha$ so that $\mu = \lambda$.
Since $Z_1 + \cdots + Z_n$ is Poisson with mean $n \lambda$,
\begin{align*}
\lim_{n\to\infty} n^{-1} \log \Pr [Z_1 + \cdots + Z_n = m_n ] &= \ldp \frac{(n\la)^{m_n} \re^{-n \la}}{m_n !}\\
&= -\la + \lim_{n \to \infty} \frac{m_n}{n}
\left[\log n\la - \frac{1}{m_n} \log m_n ! \right] .\end{align*}
Stirling's formula
implies that $n^{-1} \log n! = \log ( n /\re) + o(1)$,
so that
\begin{align}
\label{eq51} \lim_{n\to\infty} n^{-1} \log  \Pr [Z_1 + \cdots + Z_n = m_n ]
 & = -\la + \lim_{n \to \infty} \frac{m_n}{n}
\left[\log \left( \frac{n\la \re}{m_n}\right) + o(1)\right] \nonumber
\\
& =-\la + \alpha \left(\log \la - \log \alpha + 1\right)
.\end{align}
Combining (\ref{eq51}) with the $\mu = \lambda$ case of (\ref{eq50})
 we complete  the proof.
\end{proof}

\section{Proofs of main results}
\label{sec:proofs}

\subsection{Exact formula for the expected number of null vectors}

Let $\NN ( n ,m ; \ell )$ denote the number of left null vectors
 of weight $\ell$,
so that
\begin{equation}
\label{wtsum}
 \NN ( n, m) =  \sum_{\ell =0 }^m \NN ( n ,m ; \ell )
.\end{equation}
The value of $\NN(n,m)$
 is the number of collections
  of rows of $M(n,m)$
which sum to $\0$ (mod $2$), and for each set of $\ell$ rows the
probability that it sums to $\0$ is $\Prn [A (n,\ell) ]$. Hence
\begin{align}
\label{byweight}
\Expn
 [ \NN (n, m ; \ell) ] &  = \binom{m}{\ell} \Prn
 [ A (n,\ell) ] , ~\textrm{and} \\
\label{eq1}
\Expn
[\NN(n,m)]  & = \sum_{\ell=0}^m \binom{m}{\ell} \Prn
 [ A (n,\ell) ] .
\end{align}
We can thus express $\Expn [ \NN (n,m) ]$ using our exact formulae for $\Prn [ A (n,\ell) ]$
given in Lemma \ref{lem:nullp}. The proofs of our main results, Theorems \ref{thm0}, \ref{thm1}, and \ref{thm3},
will be based on an asymptotic analysis of (\ref{eq1}). As in the proof of Theorem \ref{allothm} (see Section \ref{sec:allo})
it is most convenient to work in the binomial model, for which $W^{\rm bin}_n$ is the number of odd components in a multinomial
$(W ; n^{-1},\ldots,n^{-1})$ vector. Thus a key step in the proof will be showing that, in the general case of $W_n \tod W$,
 the expression in (\ref{eq1})
can be well approximated by the binomial case. First, in the next section, we make some preliminary computations.

\subsection{Preliminaries}
\label{secprelim}

Before embarking on the main proof, we study the rate functions that will appear.
Define
\begin{equation}
\label{FFdef}
 F_{\rho,\alpha} (  \gamma ) := \log \left(
 \frac{( 1 + \rho ( 1 -2 \gamma ))^\alpha}{2
\gamma^\gamma (1-\gamma)^{1-\gamma}} \right) ,
\end{equation}
and recall from (\ref{alphastar2}) and (\ref{Fdef2}) that
$F_\rho(\alpha) = \sup_{\gamma \in [0,1/2]} F_{\rho,\alpha}(\gamma)$
and $\alpha_\rho^* = \inf\{\alpha \geq 0: F_\rho(\alpha) >0\}$.
Note that for $\gamma \in [0,1/2]$, $\rho(1-2\gamma) \geq 0$.
By continuity, $F_{\rho ,\alpha} (  \gamma)$ attains its supremum over $\gamma \in [0,1/2]$;
we denote by $\gamma_0 := \gamma_0 (\alpha) \in [0,1/2]$
the {\em smallest} point at which the supremum is attained.

We collect results on $F_\rho (\alpha)$ and $\alpha^*_\rho$ in the next lemma,
which will enable us to complete the proof of Proposition \ref{alphaprop}.

\begin{lemma}
\label{alphalem}
Suppose that $\Pr [ W = 0 ] =0$.
For any $\alpha \geq 0$,
$F_\rho(\alpha) \geq 0$, and $F_\rho$ is continuous and nondecreasing.  
 The threshold $\alpha_\rho^*$
enjoys the following properties.
\begin{itemize}
\item[(i)]
$\alpha^*_\rho \in [0,1]$, and $F_\rho(\alpha) = 0$ for $\alpha \leq \alpha^*_\rho$
but $F_\rho(\alpha) >0$ for $\alpha > \alpha^*_\rho$.
\item[(ii)] If $\alpha < \alpha^*_\rho$, then for any $\eps>0$, $\sup_{\gamma \in [0, (1/2)-\eps]} F_{\rho ,\alpha} (  \gamma) <0$.
\item[(iii)] If $\alpha > \alpha^*_\rho$, then $\gamma_0 (\alpha) \in [0,1/2)$.
\item[(iv)]  Suppose that $\tilde W$ is another $\N$-valued random variable, with $\tilde \rho (s) = \Exp [ s^{\tilde W} ]$,
such that $\tilde \rho(s) \leq \rho(s)$ for all $s \in [0,1]$.
 Then $\alpha^*_{\tilde \rho} \geq \alpha^*_{\rho}$.
\item[(v)] $\alpha^*_\rho = 0$ if and only if $\Pr [ W = 1] >0$.
\item[(vi)] If $\Pr [W=2] =1$, then $\alpha^*_\rho = 1/2$.
\item[(vii)] If $\Exp [W] < \infty$, then $\alpha^*_\rho < 1$.
\end{itemize}
\end{lemma}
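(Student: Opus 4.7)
I would first record that $\rho(0) = \Pr[W=0] = 0$, so $F_{\rho,\alpha}(1/2) = \log(1) = 0$, and hence $F_\rho(\alpha) \geq 0$ automatically. Since $\rho(1-2\gamma) \geq 0$ on $[0,1/2]$, the map $\alpha \mapsto F_{\rho,\alpha}(\gamma)$ is affine with nonnegative slope for each fixed $\gamma$, so $F_\rho$ is the supremum of a family of affine nondecreasing functions, hence convex, nondecreasing, and continuous on $(0,\infty)$. Using $\rho \leq 1$, the crude bound $F_\rho(\alpha) \leq \alpha \log 2$ gives right-continuity at $\alpha = 0$, so $\{F_\rho > 0\} = (\alpha^*_\rho, \infty)$. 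The bound $\alpha^*_\rho \leq 1$ is immediate from $F_{\rho,\alpha}(0) = (\alpha-1)\log 2$, which is positive for $\alpha > 1$. This delivers (i) and the opening claims of the lemma.

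For (ii) I would argue by contradiction: if $F_{\rho,\alpha}(\gamma_*) = 0$ for some $\gamma_* \in [0, 1/2-\eps]$, then $1-2\gamma_* > 0$ and $W \geq 1$ together give $\rho(1-2\gamma_*) > 0$; picking $\alpha' \in (\alpha, \alpha^*_\rho)$, the $\alpha$-affinity yields $F_{\rho,\alpha'}(\gamma_*) = (\alpha'-\alpha)\log(1+\rho(1-2\gamma_*)) > 0$, contradicting $F_\rho(\alpha') = 0$. Part (iii) is immediate since $F_\rho(\alpha) > 0 = F_{\rho,\alpha}(1/2)$ forces the smallest maximiser below $1/2$. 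For (iv), $\tilde\rho \leq \rho$ pointwise gives $F_{\tilde\rho,\alpha} \leq F_{\rho,\alpha}$ pointwise, so $\alpha^*_{\tilde\rho} \geq \alpha^*_\rho$.

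To handle (v) and the threshold value $1/2$ in (vi), I would Taylor-expand $F_{\rho,\alpha}$ near $\gamma = 1/2$. With $H(p) := -p\log p - (1-p)\log(1-p)$ and $\gamma = 1/2 - \delta$, one has $H(1/2-\delta) - \log 2 = -2\delta^2 + O(\delta^3)$ (from $H''(1/2) = -4$), while $\rho(2\delta)$ begins with the term $\Pr[W=1] \cdot 2\delta$. If $\Pr[W=1] > 0$ the linear-in-$\delta$ term dominates and $F_{\rho,\alpha}(1/2-\delta) > 0$ for any $\alpha > 0$ and all small $\delta$, giving $\alpha^*_\rho = 0$. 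If instead $\Pr[W=1] = 0$, then $\rho(s) \leq s^2$ on $[0,1]$; combining $\log(1+x) \leq x$ with Pinsker's inequality $\log 2 - H(\gamma) \geq (1-2\gamma)^2/2$ gives
\[
F_{\rho,\alpha}(\gamma) \leq \alpha(1-2\gamma)^2 - \tfrac{1}{2}(1-2\gamma)^2 \leq 0 \qquad (\alpha \leq 1/2),
\]
so $\alpha^*_\rho \geq 1/2$. This proves (v) and the lower bound in (vi). The matching upper bound $\alpha^*_\rho \leq 1/2$ when $\rho(s)=s^2$ comes from the explicit expansion $F_{\rho,\alpha}(1/2-\delta) = (4\alpha-2)\delta^2 + O(\delta^4)$, positive for $\alpha > 1/2$.

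For (vii), I would evaluate $F_{\rho,1}$ at $\gamma = \eps$ for small $\eps > 0$. Convexity of $\rho$ gives the tangent bound $\rho(1-2\eps) \geq 1 - 2\mu\eps$ with $\mu := \Exp[W] < \infty$, whence $\log(1+\rho(1-2\eps)) \geq \log 2 - \mu\eps + O(\eps^2)$. Combined with $H(\eps) = -\eps\log\eps + \eps + O(\eps^2)$, this produces
\[
F_{\rho,1}(\eps) \geq \eps\bigl(1 - \mu - \log\eps\bigr) + O(\eps^2),
\]
which is strictly positive for all sufficiently small $\eps > 0$ since $-\log\eps \to \infty$. Hence $F_\rho(1) > 0$ and $\alpha^*_\rho < 1$. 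The main obstacle across the lemma is identifying the correct scale of $\gamma$ at which $F_{\rho,\alpha}$ first turns positive as $\alpha$ is pushed upward: for (v) and (vi) that scale is infinitesimally near $\gamma = 1/2$ where the Pinsker trade-off is exact, whereas for (vii) it is at a macroscopic $\gamma = \eps(\mu) > 0$ where the superlogarithmic gain $-\eps\log\eps$ overtakes the linear loss $\mu\eps$.
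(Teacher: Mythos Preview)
Your proof is correct, and in several places it is cleaner than the paper's own argument. The main genuine difference lies in how you handle continuity and parts (v)--(vi). For continuity, you observe that $\alpha \mapsto F_{\rho,\alpha}(\gamma)$ is affine for each fixed $\gamma$, so $F_\rho$ is a supremum of affine functions and hence convex and continuous; the paper instead invokes Dini's theorem on monotone convergence. For the ``only if'' half of (v) and the lower bound in (vi), you combine $\rho(s)\le s^2$ (valid whenever $\Pr[W=1]=0$) with $\log(1+x)\le x$ and Pinsker's inequality $\log 2 - H(\gamma)\ge (1-2\gamma)^2/2$ to get the one-line bound $F_{\rho,\alpha}(\gamma)\le (\alpha-\tfrac12)(1-2\gamma)^2$, which simultaneously proves $\alpha^*_\rho\ge 1/2$ for all such $W$. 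The paper instead computes $F''_{\rho,\alpha}(1/2)$ and uses a Taylor/Dini argument for (v), and for (vi) carries out a lengthier calculus argument showing that the stationarity equation $\alpha=s(\gamma)$ has no solution for $\alpha<1/2$, via a term-by-term power-series comparison. Your Pinsker route is shorter and yields the stronger uniform conclusion $\alpha^*_\rho\ge 1/2$ whenever $\Pr[W\ge 2]=1$, at the cost of losing the explicit information about where the stationary point sits that the paper's derivative analysis provides. For (vii) both arguments exploit the same phenomenon --- the $-\eps\log\eps$ entropy gain near $\gamma=0$ beating the linear loss $\mu\eps$ --- but you do it by direct evaluation at a test point while the paper bounds the derivative and applies the mean value theorem; these are essentially equivalent.

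One cosmetic point: in (ii) you phrase the contradiction as ``if $F_{\rho,\alpha}(\gamma_*)=0$'', whereas strictly you should start from ``$\ge 0$''; of course part (i) already forces equality since $F_\rho(\alpha)=0$, so the gap is only notational.
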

\begin{proof}
By Lemma \ref{pgf}, $\rho(0) = \Pr [ W=0]=0$ and $\rho(1)=1$;
hence   $F_{\rho ,\alpha} (  1/2) = 0$ and
$F_{\rho ,\alpha} (0) = (\alpha -1 ) \log 2$,
so that
$F_\rho(\alpha) \geq (\alpha -1)^+ \log 2 \geq 0$.
Since $F_{\rho ,\alpha} (  \gamma)$ is nondecreasing as a function of
 $\alpha \geq 0$, Dini's theorem implies continuity of $F_\rho (\alpha)$ as a function of $\alpha \geq 0$.

 For part (i),  $F_\rho(\alpha) \geq (\alpha -1)^+ \log 2$ implies that $F_\rho (\alpha ) > 0$ for $\alpha >1$, so that
 $\alpha^*_\rho \leq 1$.
On the other hand, for $\alpha <1$, $\gamma_0 (\alpha) \in (0,1/2]$, since by continuity there
is some neighbourhood of $0$ for which $F_{\rho,\alpha} (  \gamma ) < 0$.

Since $F_{\rho ,\alpha} (  \gamma)$ is nondecreasing as a function of
 $\alpha$, for $\alpha' \geq \alpha$, $F_\rho (\alpha') \geq F_{\rho,\alpha'} (\gamma_0 (\alpha) ) \geq F_{\rho} (\alpha)$, i.e.,
 $F_\rho$ is also nondecreasing.
 Hence
$F_\rho (\alpha) >0$
 for $\alpha > \alpha^*_\rho$.
Also, the fact that $F_\rho (\alpha ) =0$ for $\alpha < \alpha^*_\rho$
 is immediate from
the definition of $\alpha^*_\rho$ and the fact that $F_\rho (\alpha ) \geq 0$.
Then $F_\rho (\alpha^*_\rho) = 0$ by the continuity of $F_\rho$ established above.
Thus we obtain part (i).

For part (ii), suppose that $\alpha < \alpha^*_\rho$.
Suppose for some $\gamma_0 \in [0,1/2)$
 that $F_{\rho,\alpha} ( \gamma_0    ) \geq 0$.
Since $\rho (1-2\gamma) > 0$ for $\gamma < 1/2$,
 $F_{\rho,\alpha} (  \gamma_0 )$ is  strictly increasing in $\alpha$,
so there exists $\alpha' \in (\alpha, \alpha^*_\rho)$
for which $F_{\rho ,\alpha'} ( \gamma_0   ) >0$,
 contradicting the definition
of $\alpha^*_\rho$.  This gives (ii).

For part (iii), suppose that $\alpha > \alpha^*_\rho$. Then $F_\rho (\alpha) >0$
by part (i) of the lemma; since $F_{\rho,\alpha}  (  1/2) = 0$, the supremum is attained in $[0,1/2)$.

For part (iv), we have that for any $\gamma \in [0,1/2]$,
$F_{\rho , \alpha } (\gamma) \geq F_{\tilde \rho, \alpha} (\gamma)$,
  since $\tilde \rho( 1-2\gamma) \geq \rho (1-2\gamma)$.
 So $F_\rho (\alpha) \leq F_{\tilde \rho} (\alpha)$
  for all $\alpha \geq 0$, and hence
$\alpha^*_{\tilde \rho} \geq \alpha^*_\rho$.

For the remaining parts of the lemma we use more detailed properties of the generating function
$\rho (s)$ (see Lemma \ref{pgf}).
For part (v),
differentiating in (\ref{FFdef})  we obtain
\begin{equation}
\label{FFdiff}
 \frac{\ud}{\ud \gamma} F_{\rho,\alpha} (  \gamma)
 = - \frac{2\alpha \rho' (1-2\gamma)}{1 + \rho (1-2\gamma) } + \log \left( \frac{1-\gamma}{\gamma} \right) ;\end{equation}
 this is well defined at least for $\gamma \in (0,1)$.
At $\gamma =1/2$ this equates to
 $-2\alpha \Pr [ W =1]$, since by Lemma \ref{pgf}
$\rho'(0) = \Pr [ W=1]$ and $\rho(0)=0$.
So if $\Pr [ W=1] >0$, $F_{\rho,\alpha} (  \gamma)$ is equal to $0$
 at $\gamma=1/2$ and its derivative there is negative for any $\alpha >0$,
so that, for any $\alpha >0$, $F_{\rho,\alpha} (  \gamma) >0$ for some $\gamma < 1/2$. The `if' part of part (v) follows.

Conversely, suppose that $\Pr [ W=1] =0$.
Then the previous argument shows that $F_{\rho,\alpha}(1/2)
=F'_{\rho,\alpha}(1/2) =0$, while a calculation shows
that $F''_{\rho, \alpha} (1/2) = 4 \alpha  \rho''(0)
-4$. Hence by continuity there exists $\delta >0$ such that
for $\alpha < \delta $ and $(1/2)- \delta \leq \gamma \leq 1/2$
we have $F''_{\rho,\alpha}(\gamma) \leq -3$.  Hence by Taylor's theorem,
 $F_{\rho,\alpha}(\gamma) \leq 0$ for $\alpha < \delta $ and
$(1/2)- \delta \leq \gamma \leq 1/2$.  Also,
$F_{\rho,\alpha} (\gamma ) \to
 - \log ( 2 \gamma^\gamma (1-\gamma)^{1-\gamma})$ as $\alpha \to 0$,
which is strictly negative apart from at $\gamma = 1/2$. Thus
by Dini's theorem,
for all $\alpha$ small enough we have
 $F_{\rho,\alpha}(\gamma) \leq 0$
for $\gamma \leq (1/2) -\delta$.
So all together we have shown that $F_{\rho,\alpha} (\gamma) \leq 0$ for all $\alpha$ sufficiently small.
Hence $\alpha_\rho^* >0$ in this case,
 giving the   `only if' part of (v).

For part (vi), suppose that $\Pr[W=2]=1$, i.e., $\rho(s)=s^2$. In this case, (\ref{FFdiff}) has a zero at $\gamma \in [0,1/2)$ if $\alpha = s( \gamma)$
where
\[ s(\gamma ) = \frac{1+(1-2\gamma)^2}{4(1-2\gamma)} \log \left( \frac{1-\gamma}{\gamma} \right) .\]
We claim that $s(\gamma)$ is decreasing on $[0,1/2)$, with a unique minimum of $s (1/2) =1/2$.
To verify this, we show $s'(\gamma) < 0$ for $\gamma \in [0,1/2)$, which, after simplification,
amounts to
\[ \frac{(1+(1-2\gamma)^2)(1-2\gamma)}{8 \gamma^2 (1-\gamma)^2} > \log \left( \frac{1-\gamma}{\gamma} \right)   .\]
Setting $z = 1-2\gamma$, it suffices to show that $\frac{z}{(1-z^2)^{2}} > \frac{1}{2} \log \left( \frac{1+z}{1-z} \right)$
for $z \in (0,1]$, which can be verified by term-by-term comparison of the corresponding power series, namely
$z + 2 z^3 + 3 z^5 + \cdots > z + \frac{z^3}{3} + \frac{z^5}{5} + \cdots$.
 Hence $s(\gamma) = \alpha$ has no solution
for $\alpha <1/2$, in which case the only stationary value of $F_{\rho,\alpha}$ is at $\gamma =1/2$,
necessarily the maximum.
Hence $\alpha^*_\rho \geq 1/2$. On the other hand,
if $\alpha > 1/2$ then
$F''_{\rho,\alpha}(1/2) = 8 \alpha -4 >0$, while
$F_{\rho,\alpha}(1/2) = F'_{\rho,\alpha}(1/2) = 0$,
so by Taylor's theorem and continuity
there exists $\gamma < 1/2$ with $F_{\rho,\alpha}(\gamma) >0$.
Hence $\alpha_\rho^* =1/2$,
 proving (vi).

Finally we prove part (vii). If $\Exp[W]< \infty$, Lemma \ref{pgf}(ii) implies that, as $\gamma \downarrow 0$,
$\rho' (1-2\gamma) =  \Exp [ W] + o(1)$.
Thus  the final term on the right-hand side of (\ref{FFdiff}) dominates in the $\gamma \downarrow 0$ limit,
and there exists $\delta >0$ such that
$\frac{\ud}{\ud \gamma} F_{\rho ,\alpha} (  \gamma) \geq \delta$ for all $\gamma \in [0,\delta]$
and all $\alpha \in [0,1]$.
Then by an application of  the mean value theorem, $F_{\rho,\alpha} (  \delta) \geq (\alpha-1)\log 2 +\delta^2$
for all $\alpha \in [0,1]$. Thus taking $\alpha <1$ close enough to $1$ we see that $F_{\rho,\alpha} (  \delta) > 1$,
which implies that $\alpha^*_\rho < 1$.
\end{proof}

\begin{proof}[Proof of Proposition \ref{alphaprop}.]
Extract the relevant parts of Lemma \ref{alphalem}.
\end{proof}

\subsection{Approximation by the binomial model}
\label{sec:approx2}

In Section \ref{sec:approx1} we showed (in Lemma \ref{approxlem1}) that $\Prn [ A (n, m_n)]$ can be well approximated
by $\Prb [ A(n,m_n)]$  on the logarithmic scale,  provided that $m_n / n \to \alpha$.
The following result is an analogous approximation lemma for $\Expn [ \NN ( n, m_n) ]$.
One could obtain such a result from   Lemma \ref{approxlem1} applied to (\ref{eq1}), with some work
(including dealing separately with terms with $\ell = o(n)$: cf Section \ref{sec:few} below).
However, it is more convenient to proceed directly, albeit using similar ideas to the proof of Lemma \ref{approxlem1};
in this case we are helped
 by the fact that $\Expn [ \NN (n,m)]$
possesses monotonicity properties absent for $\Prn [ A (n,m) ]$.

\begin{lemma}
\label{approxlem2}
Suppose that $W_n \tod W$ and $m_n / n \to \alpha > 0$.
Then
\begin{align*}
\lim_{n \to \infty}
 n^{-1} |\log  \Expn  [ \NN (n, m_n) ] -
\log \Expb [ \NN(n, m_n)] | = 0.
\end{align*}
\end{lemma}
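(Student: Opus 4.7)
The plan is to imitate the argument for Lemma \ref{approxlem1}, working with explicit exponential-form expressions for the two quantities we want to compare. Substituting (\ref{pAnm}) into (\ref{eq1}) and interchanging the order of summation (binomial theorem in $\ell$), then using the symmetry which makes $\Expn[(-1)^{S_{1,J}}]$ depend on $J$ only through $|J|$, gives
\begin{equation*}
\Expn[\NN(n,m_n)] = 2^{-n}\sum_{j=0}^{n} \binom{n}{j} (1 + a_{j,n})^{m_n}, \qquad
\Expb[\NN(n,m_n)] = 2^{-n}\sum_{j=0}^{n} \binom{n}{j} (1 + b_{j,n})^{m_n},
\end{equation*}
where $a_{j,n} := \Expn[(-1)^{S_{1,J}}]$ for any $|J|=j$ and $b_{j,n} := \rho(1 - 2j/n)$. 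All summands are nonnegative since $a_{j,n}, b_{j,n} \in [-1,1]$. The uniform estimate (\ref{delta}) already established in the proof of Lemma \ref{approxlem1} gives $\max_{0 \le j \le n} |a_{j,n} - b_{j,n}| \to 0$ as $n \to \infty$, and this is the only place where the hypothesis $W_n \tod W$ enters.

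Given $\eps > 0$, I would choose $K > 1$ large enough that $|\log(1 \pm K^{-1})| < \eps/(2\alpha)$, then $\delta > 0$ small enough that $(K+1)\delta < \re^{-2/\alpha}$, and $n$ large enough that $m_n/n < 2\alpha$ and $\sup_j|a_{j,n}-b_{j,n}| < \delta$. Split the index set into (A): indices with $1+b_{j,n} > K\delta$, and (B): the remainder, and write $\Expn = \Expn^A + \Expn^B$, $\Expb = \Expb^A + \Expb^B$ for the corresponding partial sums. For indices in (B), both bases are at most $(K+1)\delta$, so
\begin{equation*}
\Expn^B + \Expb^B \le 2\bigl((K+1)\delta\bigr)^{m_n} \le 2\,\re^{-2n},
\end{equation*}
which is exponentially negligible. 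For indices in (A), $1 + a_{j,n} = (1 + b_{j,n})(1 + \theta_{j,n}/K)$ with $|\theta_{j,n}| \le 1$, so the choice of $K$ yields $\re^{-\eps n}\Expb^A \le \Expn^A \le \re^{\eps n}\Expb^A$.

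The concluding ingredient --- and the place where the ``monotonicity properties absent for $\Prn[A(n,m)]$'' alluded to in the statement come into play --- is the deterministic lower bound $\Expn[\NN(n,m)] \ge 1$, valid for all $n$ and $m$, since the zero vector is always a null vector (i.e.\ the $\ell=0$ term in (\ref{eq1}) contributes $\binom{m}{0}\Prn[A(n,0)] = 1$). Combined with the exponentially small B contributions this forces $\Expb^A \ge 1/2$ and $\Expn^A \ge \tfrac{1}{2}\re^{-\eps n}$ for large $n$, so $\log\Expb = \log\Expb^A + o(1)$ and $\log\Expn = \log\Expn^A + o(1)$. Hence $|\log\Expn - \log\Expb| \le \eps n + o(1)$; since $\eps > 0$ was arbitrary, dividing by $n$ yields the claim. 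The main obstacle to watch is this final step: had we been proving an analogue for $\Prn[A(n,m_n)]$ we would have needed to invoke the non-trivial lower bound of Lemma \ref{binlem} to absorb the spurious Case B contribution, but here the trivial inequality $\Expn[\NN] \ge 1$ makes the bookkeeping immediate.
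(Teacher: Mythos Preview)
Your proof is correct (modulo a harmless imprecision in the constant governing the B-term bound: with $(K+1)\delta < \re^{-2/\alpha}$ and $m_n/n \to \alpha$ you get $((K+1)\delta)^{m_n} \le \re^{-cn}$ for some $c>0$ close to $2$, not literally $\re^{-2n}$, but any exponential decay suffices).

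Your route is genuinely different from the paper's. The paper does \emph{not} manipulate the explicit formulas at all; instead it couples the two row-weight sequences $(W_{n,i})$ and $(W^{\rm bin}_{n,i})$ on a common space so that the number $K(n,m_n)$ of rows on which they differ is $\Bin(m_n,\delta_n)$ with $\delta_n \to 0$, and then invokes the structural fact $\NN = 2^{\sigma}$ to get the pathwise bound $|\log \NN - \log \NN'| \le K(n,m_n)$. Concentration of $K$ then finishes. Interestingly, the paragraph preceding the lemma anticipates your general strategy (applying Lemma~\ref{approxlem1} to (\ref{eq1})) but warns it would require separate treatment of the $\ell = o(n)$ terms. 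Your binomial-theorem trick --- summing in $\ell$ \emph{before} estimating --- neatly sidesteps that issue, and your observation that the trivial bound $\Exp[\NN] \ge 1$ replaces the appeal to Lemma~\ref{binlem} is exactly the ``monotonicity property'' the paper alludes to. So your argument is a clean realisation of the alternative the authors gestured at; the paper's coupling proof is shorter and more conceptual, but yours has the virtue of reusing the Lemma~\ref{approxlem1} machinery verbatim.
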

\begin{proof}
We use a coupling argument, constructing the general model with row weights distributed as $W_n \tod W$ on the same probability space
as the binomial model with row weights distributed as $W_n^{\rm bin} \tod W$ (by Lemma \ref{wnconv}).
For any $n$, we can use a probability space in which, for each row, the weight in each model
converges almost surely to a copy of $W$. Indeed, let $W (1), W(2), \ldots$ be
 independent copies of $W$.
Using the Skorokhod representation theorem,
we may take
$W_{n,1}, W_{n,2}, \ldots$ as independent copies of $W_n$, being the weights of the rows in the general model,
such that $W_{n,i} \to W(i)$ almost surely.
Also, take $W^{\rm bin}_{n,i}$ to be the number of odd components in a multinomial $(W(i); n^{-1},\ldots, n^{-1})$ distribution,
so that $W^{\rm bin}_{n,1}, W^{\rm bin}_{n,2}, \ldots$ are independent copies of $W_n^{\rm bin}$ and the weights of the rows
in the binomial model.

Let $A_n (i) := \{ W_{n,i} \neq W^{\rm bin}_{n,i} \}$. Then
for any $\delta >0$, we may take $n$ large enough so that $\Pr [ A_n (i) ] \leq \delta$, uniformly in $i$.
Let $K ( n, m) = \sum_{i=1}^{m} \1_{A_n (i) }$ denote the number of `bad' rows.
Then $K(n,m)$ is stochastically dominated by a $\Bin (m, \delta)$ variable. In particular, for any fixed $\eps>0$ and
any $C < \infty$, standard binomial tail bounds imply that
we may take $\delta$ small enough, and hence
 $n$ sufficiently large, so that
\begin{equation}
\label{badbound}
 \Pr [ K(n,m_n) \geq \eps n ] \leq \Pr [ \Bin ( 2 \alpha n , \delta ) \geq \eps n ]  \leq \exp \{ - C n \} .
\end{equation}
We claim that  each row added to a matrix can increase
 the number of null vectors
by at most a factor of 2; this follows from (\ref{sigmadef})
and (\ref{Ndef}).
Hence
\begin{equation}
\label{difference}
\left| \log  \NN (n,m_n) - \log \NN' (n,m_n) \right| \leq K(n,m_n) ,\end{equation}
where $\NN (n,m_n)$ is the number of null vectors in the matrix with the $W_{n,i}$
and $\NN' (n,m_n)$ is the number of null vectors in the matrix with the $W^{\rm bin}_{n,i}$.
In particular, on $\{ K(n,m_n ) \leq \eps n \}$,  the bound in (\ref{difference}) is $\eps n$.
The statement in the lemma follows from (\ref{badbound}) and (\ref{difference}), since $\eps>0$ and $C<\infty$ were arbitrary.
 \end{proof}

\subsection{Null vectors consisting of few rows}
\label{sec:few}

In the asymptotics of $\Expn [ \NN (n ,m) ]$, it turns out that   null vectors
of {\em low weight} play a distinct and important role.
Recall (\ref{wtsum}). The main result of this section is the following lemma,
which exhibits a polynomial growth rate for null vectors of few rows.

\begin{lemma}
\label{lem3}
Suppose that there exist $r_0 \geq 3$ and $r_1 < \infty$ such that $\Pr  [ r_0 \leq W_n \leq r_1 ] =1$
for all $n$.
 Suppose that $m_n / n \to \alpha >0$. Then there exists $\delta > 0$ such that
 \begin{equation}
\label{lowwt}
\sum_{2\leq \ell \leq \delta  n} \Expn [ \NN ( n, m_n ; \ell )  ] = O ( n^{2-r_0} ).
\end{equation}
\end{lemma}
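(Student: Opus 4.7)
The plan is to apply (\ref{byweight}) to rewrite the sum in (\ref{lowwt}) as $\sum_{2 \leq \ell \leq \delta n}\binom{m_n}{\ell}\Prn[A(n,\ell)]$, isolate the $\ell = 2$ term as the leading $O(n^{2-r_0})$ contribution, and show the remainder (from $\ell \geq 3$) is of strictly smaller order, provided $\delta>0$ is chosen small enough.

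For $\ell = 2$: two rows sum to $\mathbf{0}$ mod $2$ iff they are identical, so
\[
\Prn[A(n,2)] \;=\; \sum_{k=r_0}^{r_1}\Pr[W_n = k]^2\binom{n}{k}^{-1} \;\leq\; \binom{n}{r_0}^{-1} \;=\; O(n^{-r_0}),
\]
using $\binom{n}{k} \ge \binom{n}{r_0}$ for $k \in [r_0, r_1]$ and $n$ large. Multiplying by $\binom{m_n}{2} = O(n^2)$ yields the asserted leading-order term.

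For $\ell \geq 3$, I would aim for a tail bound of the form
\[
\Prn[A(n,\ell)] \;\leq\; C_0 \sqrt{r_0 \ell}\,\bigl(r_0\ell/(en)\bigr)^{r_0\ell/2}
\]
(modulo polynomial-in-$\ell$ factors). In the binomial model, this follows from the Fourier formula (\ref{pmulti}) together with the inequality $|\rho_n(s)| \leq \rho_n(|s|) \leq |s|^{r_0}$ (which holds because $W_n \geq r_0$ and $|s| \leq 1$), giving $\Prb[A(n,\ell)] \leq \Exp[|J_n|^{r_0\ell}]$ with $J_n := (n-2J)/n$ for $J \sim \Bin(n,1/2)$; the Hoeffding sub-Gaussian tail $\Pr[|J_n|>t] \leq 2\exp(-nt^2/2)$ combined with a Gamma-function identity and Stirling's formula then yields $\Exp[|J_n|^{r_0\ell}] \leq C\sqrt{r_0\ell}\,(r_0\ell/(en))^{r_0\ell/2}$. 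Inserting this together with $\binom{m_n}{\ell}\leq (em_n/\ell)^\ell$ gives
\[
\binom{m_n}{\ell}\Prn[A(n,\ell)] \;\leq\; C_1 \sqrt{r_0\ell}\,\kappa^\ell\,(\ell/n)^{\ell(r_0/2-1)},
\]
with $\kappa$ depending on $\alpha$ and $r_0$. Since $r_0 \geq 3$ gives $r_0/2 - 1 \geq 1/2$, the $\ell = 3$ term is $O(n^{-3(r_0/2-1)}) = o(n^{2-r_0})$, and for $\delta$ small enough (so that $\kappa \delta^{r_0/2 - 1} < 1$), successive terms decay geometrically, so summing over $\ell \geq 3$ preserves the $o(n^{2-r_0})$ bound.

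The hard part will be transferring the estimate from $\Prb$ to $\Prn$: Lemma \ref{approxlem1} handles this when $m_n/n \to \alpha > 0$, but the coupling in its proof (via Lemma \ref{wnconv}) gives a per-row mismatch probability of order $1/n$, summing to $O(\ell/n)$ overall, which dominates $\Prn[A(n,\ell)] = O(n^{-r_0})$ for small $\ell$ and is therefore too crude. I would circumvent this by redoing the Fourier/moment analysis directly in the general model using (\ref{pAnm}): the general-model character $\Expn[(-1)^{S_{1,J}}]$ equals a Krawtchouk-polynomial ratio $K_{W_n}(|J|;n)/\binom{n}{W_n}$, and one must verify a suitable pointwise bound such as $|\Expn[(-1)^{S_{1,J}}]| \leq |1-2|J|/n|^{r_0} + O(n^{-r_0/2})$ uniformly in $|J|$, which should follow from standard hypergeometric character identities together with $W_n \geq r_0$. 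A purely combinatorial alternative is to enumerate even hypergraphs on $[n]$ with $\ell$ edges stratified by support size $v \leq r_1\ell/2$; this is transparent when $r_1 \leq 2r_0$ but in the regime $r_1 > 2 r_0$ one must exploit the even-degree constraint via a recursive analysis of the sub-problem on $v$ columns to close the argument.
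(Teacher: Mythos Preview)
Your Fourier/moment route is genuinely different from the paper's and the binomial-model half is fine: the bound $\Prb[A(n,\ell)]\le\Exp[|1-2J/n|^{r_0\ell}]$ with $J\sim\Bin(n,1/2)$, the sub-Gaussian moment estimate, and the geometric summation for $\ell\ge 3$ are all correct and give the right $o(n^{2-r_0})$ contribution once $\delta$ is small.

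The gap is exactly where you flag it, the transfer to $\Prn$, and your proposed fixes do not close it as written. The natural hypergeometric-to-binomial comparison gives only an additive error $|\Expn[(-1)^{S_{1,J}}]-\rho_n(1-2|J|/n)|=O(1/n)$ uniformly in $|J|$ (this is what total variation or the explicit expansion in falling factorials yields). Feeding that into your scheme, the contribution of the error term at $\ell=3$ is $\binom{m_n}{3}\cdot O(n^{-3})=O(1)$, not $O(n^{2-r_0})$, so the argument fails already there. The sharper $O(n^{-r_0/2})$ bound you assert is in fact of the right order (at $j=n/2$ one has $|\hat K_{r}(n/2;n)|=\Theta(n^{-r/2})$ for even $r$, and the main term takes over away from the centre), but establishing it uniformly in $j$ for all $r\in[r_0,r_1]$ amounts to proving central-region asymptotics for Krawtchouk polynomials---real work, not a ``standard identity''. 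Your combinatorial alternative is too vague to assess; the difficulty you note for $r_1>2r_0$ is genuine and unresolved.

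The paper avoids the transfer problem entirely by arguing combinatorially in the general model from the start. It bounds $\Prn[A(n,\ell)]$ via the column range $R$ (number of occupied columns), splitting according to a threshold $k$: either $R\le k$, so all $\ell$ rows land in some $k$-set of columns (probability $\le\binom{n}{k}(k/n)^{r_0\ell}$), or $R>k$ and $A(n,\ell)$ forces at least $k+1$ columns to be hit twice (controlled by a union bound using only the upper weight bound $r_1$). Two choices of $k$ (roughly $k\approx\ell+r_0$ for bounded $\ell$ and $k\approx (r_0-\tfrac32)\ell$ for $\ell$ up to $\delta n$) then yield the result directly. If you want to salvage your approach, you would need to supply the Krawtchouk estimate in full; otherwise the paper's column-range decomposition is both shorter and model-agnostic.
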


\begin{rmk}
 The exponent $2-r_0$ in (\ref{lowwt}) cannot be improved when $\Pr[ W_n = r_0 ] >0$, because
 $\Exp [ \NN ( n , m_n ; 2)]$
is itself of order $n^{2-r_0}$. Indeed, there are of order $m_n^2$ weight-2 candidate
vectors, and each is null if each of the two corresponding rows have $r_0$ non-zeros in matching
positions, an event of probability of order $n^{-r_0}$.
\end{rmk}

\begin{proof}[Proof of Lemma \ref{lem3}.]
Let $n, \ell \in \N$.
Let $R = R(n,\ell)$ denote the `column range' of the matrix
 $M(n,\ell)$, that is,
the number of columns of degree at least 1.
Let us take $k= k ( \ell ) \in \N$, to be chosen later.
 We shall estimate $\Prn [A(n,\ell)]$
by considering separately the events  $R \leq k$ and
$R >k$.

We interpret $M(n,\ell)$ as arising from a random allocation scheme,
where for each row we throw balls at random into $n$ urns (columns).
If $R \leq k$ then there is
some set of $k$ columns, such that
all the balls land in these $k$ columns.
For each ball, the probability
that it lands
in one of the first $k$ columns, given that the
other balls cast so far for that row all land in
the first $k$ columns, is at most
$k/n$. Hence since for each row at least $r_0$ balls
are cast, and we consider $\ell $ rows here,
\begin{equation}
\Prn [ R \leq k] \leq \binom{n}{k} \left( \frac{k}{n} \right)^{\ell r_0}
\leq \frac{n^{k- \ell r_0} k^{\ell r_0}}{k!} .
\label{120501a}
\end{equation}
If $R >k$ then to have $A(n,\ell)$ occur we need
to have each of the columns in the range get hit
at least twice
(i.e., have degree at least 2). Thus if $R >k$ and $A(n,\ell)$ occurs
there  is a collection  of $k +1$ columns such
that each column in the collection gets hit at least twice.
Let $B(i)$ be the event
that the column $i$ gets hit at least twice.
The probability that a particular entry is
1, given the values of up to
$k$ other entries in the same row,
is at most $r_1/(n-k)$. Hence
 the union bound yields for $1 \leq j \leq k+1$ that
$$
\Prn [B(j) \mid \cap_{i=1}^{j-1} B(i)] \leq
 \binom{\ell}{2} \left( \frac{r_1}{n-k} \right)^{2},
$$
and hence we have
$$
\Prn [ \cap_{i=1}^{k+1} B(i) ] \leq
 \left( \binom{\ell}{2} \left( \frac{r_1}{n-k} \right)^{2} \right)^{k+1}
$$
so that by the union bound, provided $k \leq n/2$ we have
\begin{align*}
\Prn [ \{R >k\} \cap A(n,\ell) ]
\leq  \binom{n}{k+1}
 \left( \binom{\ell}{2} \left( \frac{2 r_1}{n} \right)^{2} \right)^{k+1}
\leq
 \frac{ n^{-(k+1)} \ell^{2(k+1)} c_1^{k+1} }{(k+1)!},
\end{align*}
where we put $c_1 = 2 r_1^2$.
Combined with (\ref{120501a}) this gives
\begin{align*}
\Prn [ A(n,\ell) ]
\leq
 \frac{ n^{k- \ell r_0} k^{\ell r_0}}{k!}
+
 \frac{ n^{-(k+1)} \ell^{2(k+1)} c_1^{k+1} }{(k+1)!}.
\end{align*}
We assume $m_n/n \to \alpha \in (0,\infty)$,
so for $n$ large enough so that $m_n \leq (1+\alpha)n$ we
 have
for all $\ell$, and for $k \leq n/2$, that
\begin{align}
\Expn [ \NN (n, m_n; \ell) ]
& = \binom{m_n}{\ell}
\Prn [ A(\ell,n) ]
\nonumber \\
& \leq \left( \frac{ ((\alpha +1 )n)^{\ell} }{
\ell ! } \right)
\left(
 \frac{ n^{k- \ell r_0} k^{\ell r_0}}{k!}
+
 \frac{ n^{-(k+1)} \ell^{2(k+1)} c_1^{k+1} }{(k+1)!}
\right).
\label{120502a}
\end{align}
Taking $k = \ell  + r_0 -2$,
 we obtain for each  fixed $\ell$ that for some constant
$c(\ell)$ we have
\begin{align}
\Expn [ \NN (n, m_n; \ell) ] & \leq
c(\ell) (n^{\ell (1-r_0) + \ell + r_0 -2 }
+
 n^{\ell - k -1} )
\nonumber \\
& =
c(\ell) (  n^{(\ell -1)(2 -r_0)}
+  n^{1 -r_0}),
\label{120501c}
\end{align}
which is $O(n^{2 -r_0})$ for any fixed $\ell \geq 2$.

Fix an integer $K \geq 2$, to be chosen later, and
consider $K \leq \ell \leq \delta n$.
Now put $k = \ell( r_0 - 1) - \lceil \ell/2 \rceil$.
Assume $\delta \leq 1/(2 r_0)$; then for $\ell \leq \delta n$
this choice  of $k$ satisfies $k \leq n/2$, so that (\ref{120502a})
remains valid. Also note that, since $r_0 \geq 3$,
 $k \geq \frac{3 \ell}{2} -1 \geq \ell$ provided $\ell \geq 2$.

By the bound $\re^{\ell} \geq \frac{ \ell^{\ell} }{\ell !}$
and similar for $k$,
there are constants $c_2, c_3, c_4$ such that
 the first term in the
right side of (\ref{120502a})
(i.e. the product of the first factor with the
first term in the second factor) is bounded by
a constant times
\begin{align}
\frac{n^{\ell(1-r_0) +k} k^{\ell r_0} c_2^\ell}{\ell^\ell k^k}
& \leq  \frac{n^{-\lceil \ell/2 \rceil} \ell^{r_0 \ell} c_3^\ell }{ \ell^\ell \ell^{(r_0- 1)\ell - \lceil \ell/2 \rceil} }
\nonumber \\
& = \left( \frac{c_4 \ell }{n} \right)^{\lceil \ell/2 \rceil},
\label{120502b}
\end{align}
where for the inequality
 we used the fact that $\ell \leq k$ to replace $k^k$ by $\ell^\ell$ in the denominator.

Similarly, there are constants $c_5, c_6, c_7$ such that
 the second term in the
right side of (\ref{120502a}) is bounded by
a constant times
\begin{align*}
\frac{n^{\ell -k  - 1 } \ell^{2 \ell (r_0 -1) - 2 \lceil \ell /2 \rceil + 2 } c_5^\ell }{
\ell^\ell (k+1)^{k+1} }
& \leq \frac{ n^{\ell (2 -r_0 ) + \lceil \ell/2 \rceil   } \ell^{  \ell (2 r_0 -2) - 2 \lceil \ell/2 \rceil} c_6^\ell \ell^2 }{
\ell^{\ell r_0 - \lceil \ell/2 \rceil +1} n }
\\
& \leq \left( \frac{c_7 \ell }{n}  \right)^{\ell (r_0 - 2) -\lceil \ell/2 \rceil } (\ell/n).
\end{align*}
Combining with (\ref{120502b}),
since $r_0 \geq 3$ so $r_0 - 2 \geq 1$
and $\lceil \ell/2 \rceil \leq (\ell/2) +1$,
we can find a constant $c_8$ such that
for $2 \leq \ell \leq n$ we have
$$
\Expn [ \NN (n,m_n;\ell) ]
\leq c_8
\left(\frac{c_8 \ell}{n} \right)^{\ell/2} .
$$

By calculus we have that
$\left(\frac{c_8x}{n} \right)^x$ is decreasing in $x \leq  n/(c_8 \re)$,
so provided $\delta \leq 1/(c_8 \re)$ the last bound is maximized,
over $K \leq \ell \leq \delta n$,  at $\ell =K$, so that
$$
\sum_{K \leq \ell \leq \delta n }
\Expn [ \NN (n,m_n;\ell) ]
\leq
c_8 \delta n \left( \frac{c_8 K}{n} \right)^{K/2},
$$
which is $O(n^{2-r_0})$ provided we choose $K $ so that
$K/2 \geq r_0 -1$.
\end{proof}

\subsection{Proof of Theorem
 \ref{thm1} }
\label{mainproofs}

First we prove  Theorem \ref{thm1} for the binomial model, i.e.,
for $\Prb$ on the right-hand side of (\ref{eq1}),
 and then use Lemma \ref{approxlem2}.
Specifically, we prove the following result.

\begin{lemma}
\label{binthm}
Suppose that  $\Pr [ W \geq 1 ] =1$.
  Suppose that $m_n/ n \to \alpha \in (0,\infty)$
  as $n \to \infty$.  Then with $F_\rho (\alpha)$ as defined
  by (\ref{Fdef2}),
  \begin{align}
\lim_{n \to \infty} n^{-1} \log  \Expb [\NN(n,m_n)]
= F_\rho (\alpha).
\label{suplim}
\end{align}
\end{lemma}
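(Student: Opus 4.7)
The starting point is to derive an explicit closed form for $\Expb[\NN(n,m)]$. Substituting the exact formula (\ref{pmulti}) into the identity (\ref{eq1}), swapping the order of the two finite summations, and applying the binomial theorem to the inner sum over $\ell$ yields
\[
\Expb[\NN(n,m)] \;=\; 2^{-n} \sum_{j=0}^n \binom{n}{j}\bigl(1 + \rho_n(1-2j/n)\bigr)^{m}.
\]
Since $|\rho_n(s)| \le 1$ for $s\in[-1,1]$, every summand is nonnegative, so no cancellation can occur. The subsequent analysis then mirrors the proof of Lemma \ref{binlem}, but with $\rho(1-2\gamma)$ replaced by $1+\rho(1-2\gamma)$, and with the factor $\binom{n}{j}$ absorbed into the rate function via Stirling.

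For the upper bound, applying the upper Stirling bound (\ref{chooseub2}) to $\binom{n}{j}$ and estimating the $n+1$ terms by the maximum gives
\[
\Expb[\NN(n,m_n)] \;\le\; (n+1)\sup_{\gamma\in[0,1]}\left(\frac{(1+\rho_n(1-2\gamma))^{m_n/n}}{2\gamma^\gamma(1-\gamma)^{1-\gamma}}\right)^n.
\]
The supremum over $[0,1]$ reduces to the supremum over $[0,1/2]$ via the symmetry $\gamma\leftrightarrow 1-\gamma$ of the denominator combined with the inequality $0\le 1+\rho_n(-s)\le 1+\rho_n(s)$ for $s\in[0,1]$. Taking $n^{-1}\log$ and passing to the limit — using uniform convergence $\rho_n\to\rho$ on $[-1,1]$ together with a Dini-type monotone-in-$\alpha$ continuity argument as in Lemma \ref{binlem} — then yields $\limsup n^{-1}\log\Expb[\NN(n,m_n)] \le F_\rho(\alpha)$.

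For the lower bound, when $\alpha>\alpha_\rho^*$ Lemma \ref{alphalem}(iii) provides $\gamma_0=\gamma_0(\alpha)\in[0,1/2)$ attaining the supremum in (\ref{Fdef2}). I would choose integers $j_n$ with $j_n/n\to\gamma_0$ (taking $j_n=0$ if $\gamma_0=0$, which occurs when $F_{\rho,\alpha}$ is maximised at the endpoint) and retain only the single $j=j_n$ term. The lower Stirling bound (\ref{chooselb}) and uniform convergence of $\rho_n$ then give
\[
n^{-1}\log\Expb[\NN(n,m_n)] \;\ge\; F_{\rho,\alpha}(\gamma_0) - o(1) \;=\; F_\rho(\alpha) - o(1).
\]
The complementary regime $\alpha\le\alpha_\rho^*$, where $F_\rho(\alpha)=0$, is immediate from the deterministic lower bound $\NN(n,m)\ge 1$ (the zero vector is always null).

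The main technical point lies in the continuity and uniform-convergence step that replaces $\rho_n$ by $\rho$ and $m_n/n$ by $\alpha$ inside the supremum over $\gamma\in[0,1/2]$. Uniform convergence $\rho_n\to\rho$ on $[-1,1]$ will follow by extracting tightness of $(W_n)$ from $W_n\tod W$ (with $W$ a.s.\ finite) and truncating the power series $\rho_n(s)=\sum_k\Pr[W_n=k]s^k$. Combined with the fact that $1\le 1+\rho_n(1-2\gamma)\le 2$ for $\gamma\in[0,1/2]$ (so that the exponent map is jointly continuous on the relevant compact), this handles the passage to the limit uniformly in $\gamma$. All other ingredients are routine Stirling manipulations paralleling those of Lemma \ref{binlem}.
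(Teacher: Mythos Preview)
Your approach is correct and in fact cleaner than the paper's, but with one notational slip: in the binomial model the exact formula (\ref{pmulti}) already involves the \emph{limiting} generating function $\rho$, not $\rho_n$ (the multinomial allocation is driven by $W$ itself, and the computation in the proof of (\ref{pmulti}) yields $\rho(1-2j/n)$ directly). So your closed form should read
\[
\Expb[\NN(n,m)] \;=\; 2^{-n}\sum_{j=0}^n\binom{n}{j}\bigl(1+\rho(1-2j/n)\bigr)^m,
\]
and the entire paragraph about uniform convergence $\rho_n\to\rho$ is superfluous: only the continuity in $\alpha$ (already recorded in Lemma \ref{alphalem}) is needed to pass from $m_n/n$ to $\alpha$.

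The paper takes a different route. It does \emph{not} perform your binomial-theorem summation over $\ell$; instead it keeps the double sum $\sum_{\ell}\sum_j$, bounds it above and below by the supremum over both variables $\beta=\ell/m$ and $\gamma=j/n$, and then eliminates $\beta$ via the calculus identity $\sup_{\beta\in[0,1]}B^\beta/(\beta^\beta(1-\beta)^{1-\beta})=B+1$ with $B=|\rho(1-2\gamma)|$. Your collapse of the $\ell$-sum achieves the same $1+\rho(1-2\gamma)$ in one algebraic step, avoiding the two-parameter optimisation entirely. What the paper's longer route buys is explicit visibility of the optimal $\beta_0=\rho(1-2\gamma_0)/(1+\rho(1-2\gamma_0))$, which is used in the remark after Theorem \ref{thm1} to interpret the dominant weight profile of null vectors; your argument hides this but is otherwise strictly simpler.
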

\begin{proof}
From (\ref{pmulti}),
\begin{align}
\label{ub1}
& \sum_{\ell=0}^m \binom{m}{\ell} \Prb[ A (n,\ell) ]
\leq (m+1) (n+1) 2^{-n} \sup_{0 \leq \ell \leq m} \sup_{0 \leq j \leq n}  \binom{m}{\ell} \binom{n}{j} | \rho (1 - (2j/n) ) |^\ell  \nonumber\\
  &   \phantom{mmmmmmm} \leq (m+1) (n+1) 2^{-n} \sup_{ \beta \in [0,1]} \sup_{\gamma \in [0,1]}  \binom{m}{\beta m} \binom{n}{\gamma n} | \rho ( 1-2\gamma) |^{\beta m},
\end{align}
setting $\binom{n}{x} = 0$ for $x \notin \{0,1,\ldots,n\}$. Write
\begin{align*}
S_\alpha(\beta,\gamma) :=
  \left( \frac{|\rho(1-2\gamma)|^\beta}{ \beta^\beta
 (1-\beta)^{1-\beta}} \right)^\alpha
  \left( \frac{1}{2 \gamma^\gamma (1-\gamma)^{1-\gamma}} \right).
\end{align*}
Taking $m=m_n = O(n)$ in (\ref{ub1})
and using the first inequality in (\ref{chooseub2}),
we obtain
\begin{align}
\label{eq2}
n^{-1} \log \sum_{\ell=0}^{m_n} \binom{m_n}{\ell} \Prb[ A (n,\ell) ]
 \leq O( n^{-1} \log n )
 + \log
 \sup_{ \beta \in [0,1]} \sup_{\gamma \in [0,1]}
 S_{m_n/n} ( \beta, \gamma ) .
 \end{align}
 For any  $B \geq 0$, routine calculus (with a separate argument for $B=0$) shows that
\[
\sup_{  \beta \in [0,1]} \left(
 \frac{B^\beta}{\beta^{\beta}(1-\beta)^{1-\beta}} \right)
= B+1,
\]
with the supremum attained at $\beta = B/(1+B)$, so that from (\ref{eq2}) we have
\begin{align}
\label{eq3}
n^{-1} \log \Expb [ \NN (n, m_n ) ]
 \leq O( n^{-1} \log n )
 +
 \sup_{\gamma \in [0,1]}
\log  \left( \frac{ ( 1 + | \rho ( 1 - 2\gamma ) | )^{m_n/n}}{2 \gamma^\gamma (1-\gamma)^{1-\gamma}} \right) . \end{align}
Considering the transformation $\gamma \mapsto 1 -\gamma$, we see that
\[  \sup_{\gamma \in [1/2,1] } \! \left( \frac{ ( 1 + | \rho ( 1 - 2\gamma ) | )^{\alpha}}{2 \gamma^\gamma (1-\gamma)^{1-\gamma}} \right)
= \! \sup_{\gamma \in [0,1/2]} \! \left( \frac{ ( 1 + | \rho ( 2\gamma- 1 ) | )^{\alpha}}{2 \gamma^\gamma (1-\gamma)^{1-\gamma}} \right)
\leq \! \sup_{\gamma \in [0,1/2]} \! \left( \frac{ ( 1 +   \rho ( 1- 2\gamma )  )^{\alpha}}{2 \gamma^\gamma (1-\gamma)^{1-\gamma}} \right) ,\]
  since, for $\gamma \in [0, 1/2]$, $| \rho (2\gamma-1) | \leq \rho (1-2\gamma)$,
  by Lemma \ref{pgf}(iii).
  Hence from (\ref{eq3}) we have, with $F_\rho (\alpha )$ as defined
 at (\ref{Fdef2}),
$ n^{-1} \log \Expb [ \NN (n, m_n ) ]
 \leq O( n^{-1} \log n )
 +  F_\rho ( m_n /n  )$.
 Since $m_n /n \to \alpha$ and $\alpha \mapsto F_\rho (\alpha)$ is continuous  (see Lemma \ref{alphalem}),
 \[ \limsup_{n \to \infty}   n^{-1} \log \Expb [ \NN (n, m_n ) ] \leq   F_\rho (\alpha) . \]

For the lower bound, we use the fact that
\begin{align*}
\Expb [ \NN (n, m_n) ]  & \geq
\sum_{\ell = 0}^{m_n  } \binom{m_n}{\ell} \sum_{j=0}^{ \lfloor n/2 \rfloor } 2^{-n} \binom{n}{j} ( \rho (1-(2j/n)) )^\ell \\
& \geq 2^{-n} \sup_{\beta \in [0, 1] } \sup_{\gamma \in [0,1/2]} \binom{m_n}{\beta m_n } \binom{n}{\gamma n}
| \rho ( 1- 2 \gamma ) |^{\beta m_n } ,\end{align*}
using the nonnegativity of the appropriate terms for both inequalities.
Using the lower bound  in (\ref{chooselb}), similarly to   above,
we obtain that $\liminf_{n \to \infty}   n^{-1} \log \Expb [ \NN (n, m_n ) ] \geq   F_\rho (\alpha)$.
Hence combining the upper and lower bounds,   we obtain
(\ref{suplim}).
\end{proof}

Now we can give the proof of our main result.

\begin{proof}[Proof of Theorem \ref{thm1}.]
Lemma \ref{binthm} shows that (\ref{thm1eq}) holds for the case where $W_n = W_n^{\rm bin}$, and Lemma \ref{approxlem2} shows that
the result carries over to the general case.
For the final statement of the theorem, suppose that $\alpha < \alpha^*_\rho$ and that $\Pr [ r_0 \leq W_n \leq r_1 ] =1$ for some $r_0 \geq 3$
and $r_1 < \infty$.
 Lemma \ref{lem3} shows that, for a suitable $\delta >0$,
\begin{equation}
\label{ub3} \sum_{\ell=1}^{  \delta n} \binom{m_n}{\ell}
\Prn  [ A (n,\ell) ] = O (n^{2-r_0} ) .
\end{equation}
For $\ell \geq \delta n$, we first restrict to the binomial model.
Choose $\eps >0 $ so that $(3 \eps)^{\delta} < 2^{-2\alpha}$.
By a similar argument to (\ref{ub1}), but splitting the supremum
 over $j$ into two parts,
\begin{align}
\label{ub2}
& \!\!\!\! \sum_{\ell=   \delta n}^{m_n}  \! \scalebox{0.9}{$\dbinom{m_n}{\ell}$}  \Prb[ A (n,\ell) ]
\leq (m_n+1) (n+1) 2^{-n} \!\! \sup_{\delta n \leq \ell \leq m_n} \sup_{j : | j - (n/2) | \leq \eps n}  \! \scalebox{0.9}{$\dbinom{m_n}{\ell}$}  \scalebox{0.9}{$\dbinom{n}{j}$} | \rho (1 - (2j/n) ) |^\ell \nonumber\\
& \phantom{mmmmmm} {}
+ (m_n+1) (n+1) 2^{-n} \sup_{0 \leq \ell \leq m_n} \sup_{j : | j - (n/2) | > \eps n}  \scalebox{0.9}{$\dbinom{m_n}{\ell}$}  \scalebox{0.9}{$\dbinom{n}{j}$} | \rho (1 - (2j/n) ) |^\ell . \end{align}
Similarly to (\ref{eq3}), the second term on the right-hand side of (\ref{ub2})
is bounded above by
\[ \exp \left\{ o(1) + \sup_{\gamma \in [0, (1/2) - \eps ]} F_{\rho, m_n/n} (\gamma)  \right\} ,\]
which decays to $0$ exponentially fast, by Lemma \ref{alphalem}(ii), since $m_n /n \to \alpha \in (0, \alpha^*_\rho)$.
On the other hand, for $| j - (n/2) | \leq \eps n$, we have from Lemma \ref{pgf}(i) that
$| \rho (1 - (2j/n) ) | \leq 3 \eps$, for $\eps >0$ small enough, so that,
 since $\ell \geq   \delta n$,
$| \rho (1 - (2j/n) ) |^\ell \leq (3\eps)^{  \delta n}$,
so by the choice of $\eps$ the first term in
the right hand side of (\ref{ub2})
 tends to $0$ exponentially fast.
Hence
\begin{equation}
\label{266}
 \limsup_{n \to \infty} n^{-1} \log \sum_{\ell= \delta n}^{m_n} \binom{m_n}{\ell} \Prb [ A (n,\ell) ] < 0 .\end{equation}

We next deduce a version of (\ref{266}) with $\Prn$ in place of the special case $\Prb$, using Lemma \ref{approxlem1}
once more.
To this end, observe first that the $\Prn$-analogue of the sum in (\ref{266})
consists of $O(n)$ nonnegative terms, so is bounded between the largest term and $O(n)$ times that same term, so that
\[  n^{-1} \log \sum_{\ell= \delta n}^{m_n} \binom{m_n}{\ell} \Prn [ A (n,\ell) ]  = n^{-1} \log \max_{ \delta n \leq \ell \leq m_n } \binom{m_n}{\ell} \Prn [ A (n,\ell) ]
+ O ( n^{-1} \log n ) .\]
By Lemma \ref{approxlem1}, for any $\eps >0$, there exist $\eps_{n,\ell}$ with $| \eps_{n,\ell} | \leq \eps$,
uniformly for $\ell$ with  $\delta n \leq \ell \leq m_n$ and $n$ sufficiently large, such that
\[   \binom{m_n}{\ell} \Prn [ A (n,\ell) ] =  \binom{m_n}{\ell} \Prb [ A (n,\ell) ] \exp \{ \eps_{n,\ell} n \} .\]
So we obtain
\[
 \limsup_{n \to \infty} n^{-1} \log \sum_{\ell= \delta n}^{m_n} \binom{m_n}{\ell} \Prn [ A (n,\ell) ] < 0  ,\]
 which, combined with (\ref{ub3}), yields (\ref{polybound}).
\end{proof}

\section{Cores of sparse random hypergraphs}
\label{sec:cores}

\subsection{Hypergraphs and 2-cores}
\label{sec:hypermodel}

Given a set $\VV= \{v_1,\ldots,v_n\}$,
 whose elements we call {\em vertices},
a non-empty subset of $\mathcal{V}$ is called a {\em hyperedge}. Given a
collection $\EE:=\left(E_i\right)$ of $m$ hyperedges,
 we refer to the pair $(\VV,\EE)$ as a {\em hypergraph}.
This hypergraph may be identified with an $m \times n$
 matrix $A$ with entries in $\{0,1\}$ (the {\em incidence matrix} of
the hypergraph),
  having no zero rows, as follows.
The entry
$a_{i,j} $ of $A$ takes the value $1$ if and only if
 $v_j \in E_i$, in which case we say
 row $i$ is {\em incident} to column $j$, and that
hyperedge $E_i$ is incident to vertex $v_j$,
and refer to $(E_i,v_j)$  as an {\em incidence}
of the hypergraph.

The number of hyperedges incident to a vertex $v$ is
 the {\em degree} of $v$. Fix a hypergraph $(\VV,\EE)$.
 For   $\FF \subseteq \EE$,
 the set $V(\FF) \subseteq \VV$
of vertices which are incident to at least one of the hyperedges in $\FF$ is called the {\em vertex span}
of $\FF$. We identify the hypergraph $(\FF, V (\FF))$  by the edge subset $\FF$  that induces it,
and call $\FF \subseteq \EE$
 a {\em partial hypergraph}.
A partial hypergraph $\FF \neq \emptyset$ is   a {\em hypercycle}
if every vertex $v$ has even degree with respect to $\FF$.
For an incidence matrix $A$, a left null vector is
 the indicator of a hypercycle in the corresponding hypergraph.

Given a hypergraph $(\VV,\EE)$,
the {\em 2-core} is
defined via
 the following algorithm:
\begin{enumerate}
\item If there exists no vertex 
  of degree one, stop.
\item Otherwise, select an arbitrary vertex of degree one, and delete the unique incident hyperedge; then return to Step 1.
\end{enumerate}
The algorithm terminates, because the partial hypergraphs are decreasing;
 the terminal partial hypergraph, which
 does not depend on the arbitrary choices
made in Step 2 (see \cite[pp.\ 127--128]{dn1}), is called the {\em 2-core} of $\EE$,
denoted $\mbox{Core}(\EE)$. Possibly
$\mbox{Core}(\EE) $ has no hyperedges.
Figure
\ref{fig6} illustrates a hypergraph with 19 vertices and 12 hyperedges, of which 4 hyperedges are in the 2-core.

\begin{figure}[!h]
\center
\includegraphics[width=8cm]{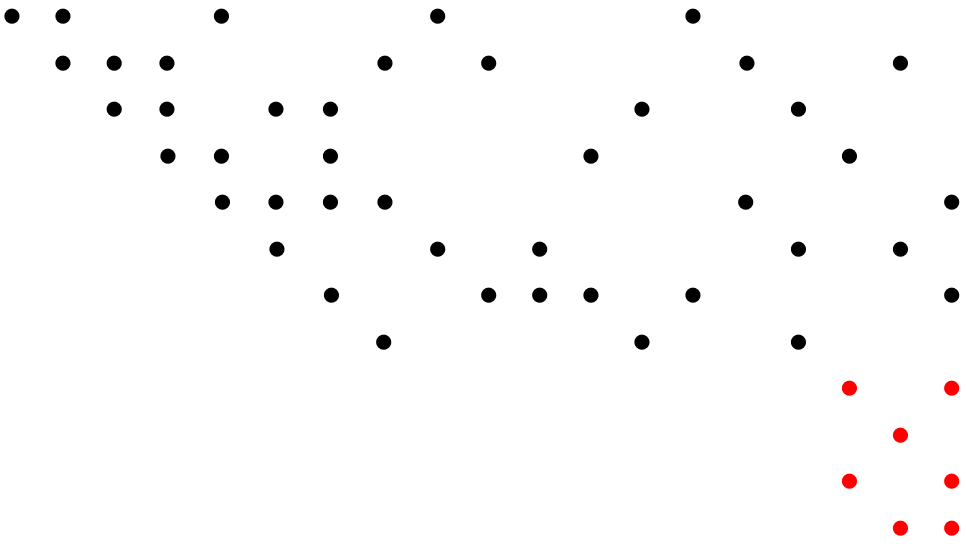}
\caption{Pictorial representation of the adjacency matrix for a  hypergraph with 19 vertices and 12 hyperedges, whose incidences are shown as $\bullet $. The first 8 hyperedges are not in the
 2-core.
The last 4 hyperedges form a linear system of rank 3 over GF[2].
The entire $19 \times  12$ system has rank
11, and so contains a hypercycle (hyperedges 9 and 11).}
\label{fig6}
\end{figure}

The connection between the $2$-core and hypercycles was exploited by Cooper \cite[p.\ 371]{cooper2},
following an idea that he attributes to Molloy (see \cite[p.\ 268]{cooper1}).
The connection is demonstrated by the following useful observation.

\begin{lemma}
\label{hypercycle-2core}
Suppose that the 2-core $\mathcal{C}:=\mbox{Core}(\EE)$ of a hypergraph
$(\VV,\EE)$ has vertex span $V(\mathcal{C}) \subseteq \mathcal{V}$ and size (number of hyperedges) $|\mathcal{C}|$.
\begin{itemize}
\item[(i)] Any hyperedge $E \notin \mathcal{C}$ cannot belong to a hypercycle of $(\VV,\EE)$.
\item[(ii)] If $\mathcal{C} = \emptyset$, then $(\VV,\EE)$ contains no hypercycle.
\item[(iii)] If $|V(\mathcal{C})|<|\mathcal{C}|$, then $(\VV,\EE)$   contains a hypercycle.
\end{itemize}
\end{lemma}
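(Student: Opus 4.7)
The plan is to prove (i) by tracking hypercycles through any execution of the deletion algorithm; then (ii) follows as an immediate corollary (since hypercycles are non-empty by definition); and (iii) follows from a standard rank argument over GF$[2]$ applied to the incidence matrix of the 2-core. Since the paper already notes (citing \cite{dn1}) that the terminal partial hypergraph is independent of the arbitrary choices made at each step of the algorithm, I will fix one valid execution and argue with respect to it.

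For (i), let $\EE = \EE_0 \supseteq \EE_1 \supseteq \cdots \supseteq \EE_T = \mathcal{C}$ denote the sequence of partial hypergraphs produced by the algorithm. The key claim is that every hypercycle $\FF$ of $(\VV,\EE)$ satisfies $\FF \subseteq \EE_k$ for all $k$; taking $k=T$ immediately gives (i), and (ii) then follows because a hypercycle contained in $\mathcal{C} = \emptyset$ would be empty, contradicting non-emptiness. The claim is proved by induction on $k$: the base $k=0$ is trivial, and at step $k$ the hyperedge $E$ deleted is, by construction, the unique hyperedge of $\EE_{k-1}$ incident to some vertex $v$ of degree one in $\EE_{k-1}$. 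If $E$ were to lie in $\FF \subseteq \EE_{k-1}$, then $v$ would have degree exactly one in $\FF$ (since no other hyperedge of $\FF$ can meet $v$), contradicting the hypercycle requirement that every vertex have even degree. Hence $E \notin \FF$ and $\FF \subseteq \EE_k$.

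For (iii), view the 2-core as an $|\mathcal{C}| \times |V(\mathcal{C})|$ incidence matrix $B$ over GF$[2]$, with rows indexed by the hyperedges in $\mathcal{C}$ and columns indexed by the vertex span $V(\mathcal{C})$. Since $|\mathcal{C}| > |V(\mathcal{C})|$, the rows of $B$ are linearly dependent over GF$[2]$, so there exists a non-zero $a \in \{0,1\}^{|\mathcal{C}|}$ with $aB \equiv \mathbf{0} \pmod 2$. Let $\FF \subseteq \mathcal{C}$ be the non-empty sub-collection of hyperedges indexed by the support of $a$. The relation $aB \equiv \mathbf{0}$ says exactly that every vertex in $V(\mathcal{C})$ has even degree in $\FF$, while vertices outside $V(\mathcal{C})$ have degree zero in $\FF \subseteq \mathcal{C}$; hence $\FF$ is a hypercycle of $(\VV,\EE)$. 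There is no real obstacle in any of these steps: the main subtleties are the non-emptiness convention for hypercycles and the well-definedness of $\mathcal{C}$, both already handled, after which each part reduces to a one-line induction or rank inequality.
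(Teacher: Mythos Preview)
Your proof is correct. For parts (i) and (ii) your induction along the deletion sequence is essentially the paper's argument, just phrased in forward-inductive rather than minimal-counterexample form.

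For part (iii) you take a different route. You use the rank inequality over GF$[2]$: more rows than columns forces a non-zero left null vector of the core's incidence matrix, which is then read off as a hypercycle. The paper instead applies the pigeonhole principle directly: there are $2^{|\mathcal{C}|}-1$ non-empty sub-collections of core hyperedges but only $2^{|V(\mathcal{C})|}$ possible odd-degree vertex sets, so two distinct sub-collections $\FF,\FF'$ share the same odd-degree set, and $\FF \triangle \FF'$ is a hypercycle. The two arguments are equivalent in strength here; the paper's version has the small expository advantage of being self-contained and field-free (indeed the Acknowledgements note that the pigeonhole argument was chosen precisely to avoid invoking linear algebra), while yours ties more directly into the GF$[2]$ framework used throughout the rest of the paper.
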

\begin{proof}
If there are $s$ hyperedges not in the 2-core $\mathcal{C}$, there exists a labelling
of them as $E_1, E_2, \ldots, E_s$ with the property that, for every $j$, $E_j$ has some vertex with
degree one after hyperedges $E_1, E_2, \ldots, E_{j-1}$ are removed.
Suppose $(\VV,\EE)$ has some hypercycle $\FF \neq \emptyset$.
None of $E_1, E_2, \ldots, E_s$ can belong to $\FF$:  otherwise,
there would be some minimum $j$ for which $E_j \in \FF$, and this $E_j$ has some
vertex $v$ of degree one in the partial hypergraph from which $E_1, E_2, \ldots, E_{j-1}$
have been removed, which contains $\FF$; so $v$ cannot have even degree in $\FF$,
which is a contradiction. This proves (i), and (ii) follows.
For (iii), say $c:=|V(\mathcal{C})|<|\mathcal{C}|=:r$. Then there are $2^r - 1$ non-empty
partial hypergraphs, but only $2^c < 2^r - 1$ possible indicator vectors for a set of vertices
of odd degree. By the pigeonhole principle, there must be two distinct partial hypergraphs
$\FF, \FF' \subseteq \EE$ for which the sets of vertices of odd degree
are the same. Then $\FF \triangle \FF'$ is a hypercycle.
 \end{proof}

\subsection{The $2$-core in uniform random hypergraphs}
\label{sec:fluid}

In this subsection we consider a certain {\em uniform random}  hypergraph model,
which is different from (but related to) the hypergraph model induced by our random matrix $M(n,m_n)$; in Section \ref{secPoisson}
we will connect the two models.
The incidences (non-zero entries)
 in a random incidence matrix may be viewed as edges of a random bipartite
graph, whose left and right nodes are the row labels and column labels, respectively.
A standard probability model for a random bipartite graph is to fix the degrees of all
nodes in advance (subject to a consistency condition),
and sample uniformly from the bipartite graphs with this set of left and right node degrees.

By interpreting an incidence matrix as a hypergraph, as above,
this automatically gives a {\em uniform random hypergraph} model, where the degree of a left node is the
{\em weight} of a hyperedge (meaning its number of incident vertices), and the
degree of a right node is the vertex degree defined above.
Darling and Norris \cite{dn2} analyse
the statistical properties of the 2-core for such random hypergraphs
under suitable conditions on the hyperedge-weight and vertex-degree distributions.
In unpublished work of the same authors, a generalization to unbounded vertex degrees and row weights is given, under  finite third moments assumptions.

For the purposes of the present paper, we require a more modest relaxation
 of the conditions of \cite{dn2}, to cover the case
where the row weights remain uniformly bounded but the vertex
 degrees are approximately Poisson distributed.

For each $n$, define vectors of nonnegative integers $\bd_n := ( d_n ( k ) : k \in \ZP)$
and
$\bw_n := ( w_n (k) : k \in \N)$ with $\sum_{k \geq 0} d_n (k) =n$
and $m_n := \sum_{k \geq 1} w_n (k)$; we assume that $\bd_n$ and $\bw_n$ are compatible
in the sense that $\sum_{k \geq 1} k w_n (k) = \sum_{k \geq 0} k d_n (k) < \infty$. We also assume that $m_n \to \infty$.
Suppose that for each $i \in \N$ and $j \in \ZP$,
\begin{equation}
\label{uniflimits}
 \lim_{n \to \infty} \frac{w_n (i)}{\sum_{k \geq 1} w_n (k) } = \rho_i; ~~~ \lim_{n \to \infty} \frac{d_n (j)}{n} = \nu_j .\end{equation}
Define generating functions $\rho (s) := \sum_{k \geq 1} \rho_k s^k$ and $\nu (s) := \sum_{k \geq 0} \nu_k s^k$.
We assume that the weights are uniformly bounded, i.e., $\rho_k = 0$ for all $w$ sufficiently large,
and the degree distribution has all moments, i.e., $\sum_{k \geq 1} \nu_k k^\beta < \infty$ for all $\beta >0$. Under these
conditions, $\rho'(1)$ and $\nu'(1)$ are the (finite) means corresponding to these distributions.

Consider a sequence of random hypergraphs with $n$ vertices and $m_n$ hyperedges, selected uniformly
from those hypergraphs with edge weight multiplicities $\bw_n$ and vertex degree multiplicities $\bd_n$.

  To present asymptotic results for the 2-cores of a sequence of such uniform random hypergraphs,
it helps to introduce the notion of sampling a single incidence uniformly at
 random from all incidences in a hypergraph. Denote such an
  incidence $(E,v)$.
Denote the weight of $E$ by $S+1$, and the degree of $v$
by $L+1$; thus $S$ is the number of other vertices in this hyperedge,
and $L$ is the number of other hyperedges incident to this vertex.
Size bias occurs here: the event that $E$ has weight $k$ occurs
with probability
proportional to $k$ times the number of rows of weight $k$,
 and similarly
the probability that the degree of $v$ is $d$ is proportional to $d$ times the
number of degree $d$ vertices. Given the $\rho_w$ and $\nu_d$ describing the limiting
row weight and vertex degree distributions, we may thus compute
 a pair of limiting probability generating functions for $L$ and $S$, respectively:
  \begin{equation} \label{e:pgfs}
\lambda (s) := \Exp[s^L] = \sum _{d=0}^{\infty } \lambda_d s^d; ~~~
\sigma (s) := \Exp[s^S] = \sum _{w=0}^{\infty } \sigma_w s^w,
\end{equation}
 where, due to the size biasing, the coefficients in (\ref{e:pgfs}) are given by
\[ \lambda_d = \frac{(d+1) \nu_{d+1}}{\nu' (1)} ; ~~~ \sigma_w = \frac{(w+1) \rho_{w+1}}{\rho'(1)} .\]
Hence the generating functions themselves become:
\begin{equation}
\label{gf2}
\lambda (s) = \frac{\nu'(s)}{\nu'(1)}; ~~~
\sigma (s) = \frac{\rho'(s) }{\rho'(1) }. \end{equation}

To avoid triviality, we assume that $\sigma _0 = 0$ (equivalently, $\rho_1 =0$),
i.e., there are no 1-edges,
 and $\lambda _0\notin \{0,1\}$
(otherwise the 2-core is of no interest). Define
\begin{equation}
\label{varphidef}
\varphi (s) :=1-\lambda (1-\sigma (s)).
\end{equation}

From the conditions $\sigma _0\neq 1$ and $\lambda _0\neq 1$, we
deduce that  $\varphi
:[0,1]\to \R$ is strictly increasing; moreover $\varphi (0) = 1 -\lambda (1-\sigma (0 ) ) =0$ (since $\sigma_0 = 0$)
and $\varphi (1) = 1 -\lambda_0 \in (0,1)$,
so $\varphi $ takes values in $[0,1)$,
and there exists a largest solution $g^*$ in $[0,1)$ of the equation $\varphi (s)=s$.
That is, 
\begin{equation} \label{sstar}
\starg := \sup\{s \in [0,1): \varphi (s) = s\}.
\end{equation}
In the case where $\starg >0$ and
the curve $y = \varphi (s)$  crosses the curve $y = s$ (rather
than just touching it) at
$s=\starg$,  we also have
\begin{equation} \label{e:crossing}
\starg = \sup\{s \in (0,1): \varphi (s) > s\}.
\end{equation}

Now we can state the result on the $2$-core that we shall use, which
   amounts to a variant of Theorem 7.1 of \cite{dn2}.

\begin{theorem} \label{t:2core}
Consider  a sequence of uniform random hypergraphs associated with
 sequences $\bw_n$ and $\bd_n$
satisfying (\ref{uniflimits}) with $\rho_w =0$ for all $w$ large enough and $\sum_{d \geq 1} \nu_d d^\beta < \infty$ for all $\beta>0$.
Suppose that the corresponding pair (\ref{e:pgfs}) of random-incidence generating functions
has $\sigma_0 = 0$, $\lambda_0 \notin \{0,1\}$, and is such that $\starg$, given
by (\ref{sstar}),
has either $\starg = 0$ or $\starg$ satisfying (\ref{e:crossing}).
Then the following hold a.s.\ in the limit as $n  \to \infty$.
\begin{itemize}
\item[(i)] If  $\starg = 0$, the proportion of hyperedges which survive in the 2-core converges to zero.
\item[(ii)] If $\starg > 0$, then for
any $k \in \ZP$ with $\rho_k >0$, 
 the proportion of weight-$k$ hyperedges which
survive in the 2-core is asymptotically $(\starg)^k$; overall, a proportion $\rho(\starg)$ of
 hyperedges survive, and a proportion $s^*\sigma(\starg)$ of incidences.
\item[(iii)] 
If $\starg > 0$, then for any $d,k \in \N$ with $2 \leq d \leq k$
and $\nu_k >0$,
 the proportion of vertices of degree $k$ whose degree in the 2-core is $d$
converges to
\[
\binom{k}{d} \sigma(\starg)^d (1 - \sigma(\starg))^{k - d}  .
\]
\item[(iv)] If $\starg > 0$, the 2-core is again a uniform random hypergraph, given its hyperedge weights
and vertex degrees, whose distributions are determined by the previous assertions.
\end{itemize}
\end{theorem}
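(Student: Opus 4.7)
My plan is to deduce Theorem \ref{t:2core} from the bounded-degree result \cite[Theorem 7.1]{dn2} by a truncation argument in the vertex degree, combined with a coupling that controls the error introduced by the truncation. The hyperedge weights are already uniformly bounded by hypothesis, so the only obstacle to direct citation is the possibility of unbounded vertex degrees.

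First, for each $K \in \N$ I would introduce a truncated model whose degree multiplicities $\bd_n^{(K)}$ agree with $\bd_n$ on $\{d \leq K\}$ and redistribute the mass at $\{d > K\}$ into additional low-degree slots so as to preserve the total number of incidences $\sum_{k \geq 1} k w_n(k)$ (the row weights are left unchanged). Because $\sum_{d \geq 1} d^{\beta} \nu_d < \infty$ for every $\beta > 0$, for any $\eps > 0$ one can fix $K$ large enough so that, uniformly in all $n$ sufficiently large, at most $\eps n$ incidences are relocated and the corresponding sequence $\bd_n^{(K)}$ satisfies the hypotheses of \cite[Theorem 7.1]{dn2} with limits $\nu_j^{(K)}$ satisfying $\nu_j^{(K)} \to \nu_j$ as $K \to \infty$. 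This yields all the conclusions of Theorem \ref{t:2core} for the truncated model with generating functions $\lambda^{(K)}$, unchanged $\sigma$, and $\varphi^{(K)}(s) := 1 - \lambda^{(K)}(1 - \sigma(s))$, and fixed point $s^{*,K}$.

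Second, I would establish that $\lambda^{(K)} \to \lambda$ uniformly on $[0,1]$ (which follows from the fact that $\lambda$ is a probability generating function with $\lambda'(1) = \sum_d d \lambda_d < \infty$ by the moment assumption), hence $\varphi^{(K)} \to \varphi$ uniformly. Under the crossing assumption (\ref{e:crossing}) at $s^* > 0$, a standard perturbation argument shows $s^{*,K} \to s^*$ as $K \to \infty$, and in the case $s^* = 0$ one has $s^{*,K} \downarrow 0$. Consequently $\sigma(s^{*,K}) \to \sigma(s^*)$ and $(s^{*,K})^k \to (s^*)^k$, so that the claimed limits in (i)--(iii) for the truncated model converge to those in Theorem \ref{t:2core}.

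Third, I would couple the peeling processes for the untruncated and truncated hypergraphs through the configuration-model representation for uniform random hypergraphs with prescribed weight and degree sequences, so that the two instances agree on all incidences outside a set of size $O(\eps n)$. Since each peeled hyperedge can trigger at most a finite fluid-limit chain of further removals whose expected propagation is controlled by the slope of the fluid trajectory driven by $\varphi - \mathrm{id}$, one obtains that the two $2$-cores differ in at most $o_{\eps}(n)$ hyperedges with high probability, where $o_{\eps}(n) \to 0$ as $\eps \downarrow 0$ after letting $n \to \infty$. Sending $\eps \downarrow 0$ (equivalently $K \to \infty$) then completes parts (i)--(iii). For part (iv), the $2$-core is a deterministic function of the incidence structure that depends only on degrees, so by the symmetry of the uniform measure over configurations with given degree sequence, conditioning on the final hyperedge-weight and vertex-degree multiplicities of the $2$-core yields again a uniform random hypergraph.

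The main obstacle will be the coupling step: controlling the discrepancy between the truncated and untruncated peeling dynamics requires more than a naive edge-count bound, because the removal of a single hyperedge can cascade. I expect to handle this by comparing the fluid-limit ODE trajectories associated with $\varphi^{(K)}$ and $\varphi$ and showing that uniform convergence of these driving functions, together with the crossing hypothesis at $s^*$, forces uniform closeness of the trajectories up to their hitting times of zero, so that the cascade sizes differ by $o(n)$ as required.
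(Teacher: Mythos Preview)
The paper does not actually prove Theorem \ref{t:2core}: immediately after the statement it says that the methods of \cite{dn2} ``admit the modest extension of this section'' and that ``because of its proximity to the result in \cite{dn2}, we do not prove Theorem \ref{t:2core} here.'' So the paper's implicit route is to rerun the differential-equation approximation argument of Darling--Norris with the weaker moment hypotheses on the vertex-degree distribution, rather than to reduce to the bounded case as a black box.

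Your approach is therefore genuinely different: you aim to treat \cite[Theorem 7.1]{dn2} as a black box and reach the unbounded-degree case by truncating at level $K$, applying the bounded result, and then coupling. The first two steps (truncation preserving incidence count, and convergence $\lambda^{(K)} \to \lambda$ hence $s^{*,K} \to s^*$ under the crossing hypothesis) are unproblematic, and your argument for (iv) via exchangeability of the uniform model is the standard one and is fine. The coupling step, however, is where the real content lies, and your sketch is thinner here than it needs to be. Saying that the two hypergraphs ``agree on all incidences outside a set of size $O(\eps n)$'' does not by itself bound the symmetric difference of the two $2$-cores, precisely because of cascades; and appealing to closeness of the fluid-limit ODE trajectories for $\varphi^{(K)}$ and $\varphi$ controls the \emph{limits} of the two peeling processes, not the discrepancy between the truncated and untruncated random hypergraphs at finite $n$. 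To make this rigorous you would need either a sandwiching argument (exhibiting truncated models that dominate and are dominated by the original, and showing both have the same fluid limit as $K \to \infty$) or a quantitative stability statement for the peeling process under $O(\eps n)$ edge perturbations away from criticality. Either is plausible under the crossing assumption (\ref{e:crossing}), but it is genuine work and is the place where a referee would push back. By contrast, the paper's suggested route---reworking the Darling--Norris fluid-limit proof directly under the all-moments condition on $\nu$---avoids the coupling issue entirely, at the cost of not being a clean black-box reduction.
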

 As mentioned above,
in \cite{dn2}  all but finitely many coefficients of the generating functions (\ref{e:pgfs}) were taken to be zero, but the
methods admit the modest extension of this section, and indeed
 can be extended to the case where $\lambda''(1)$ and $\sigma''(1)$ are
finite, corresponding to
finite third moments for hyperedge weight and vertex degree distributions.
Because of its proximity to
the result in \cite{dn2},
 we do not prove Theorem \ref{t:2core} here.

\subsection{Application to $M(n,m)$}
\label{secPoisson}

In the previous subsection we assumed the row and column weights
of our random matrix
were specified in advance, but now we return to
the random matrix model used in the rest of the paper,
so   our random $m_n \times n$ incidence matrix $A$
will be precisely the matrix $M(n,m_n)$,
described in Section \ref{sec:matrixlaw}, i.e.,
with i.i.d.\ rows with weights having the distribution of $W_n$, and corresponding
generating function $\rho_n(s)$ having limit $\rho(s)$.

To justify being able to apply Theorem \ref{t:2core} in this setting,
we  give the following strong law of large numbers
for the empirical distributions of the row and column weights
of $M$.
\begin{lemma}
\label{emplem}
Suppose $m_n \in \N$ with $m_n/n \to \alpha $ as $n \to \infty$,
with $\alpha >0$, and the $W_n$ are uniformly bounded.
Let $k \in \ZP$,
 let $N_k(n)$ be the number of rows of $M(n,m_n)$ of
weight $k$, and let $\tilde{N}_k (n)$
 be the number of columns  of $M(n,m_n)$ of degree $k$.
Then a.s.,
\begin{align}
\label{emprow}
\lim_{n \to \infty} m_n^{-1} N_k(n)  & = \Pr[W=k] , ~~~\textrm{and} \\
\label{empcol}
\lim_{n \to \infty} n^{-1} \tilde{N}_k(n) &  = \frac{\re^{-\mu} \mu^k}{k!} ,\end{align}
where we set $\mu:= \alpha \Exp [W] = \alpha \rho'(1) $.
Moreover,
the total number of incidences   satisfies the law of large numbers
\begin{align}
\label{incidences}
 \lim_{n \to \infty} n^{-1} \sum_{k \geq 0}  k N_k (n) =  \lim_{n \to \infty} n^{-1} \sum_{k \geq 0} k \tilde N_k (n) = \mu, \ \mathrm{a.s.} 
\end{align}
\end{lemma}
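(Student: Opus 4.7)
The plan is to establish each of the three convergence statements via an exponential concentration inequality combined with the Borel--Cantelli lemma, on a probability space supporting the triangular array of rows. Statements (\ref{emprow}) and (\ref{incidences}) reduce directly to laws of large numbers for sums of i.i.d.\ bounded random variables across the rows, while (\ref{empcol}) requires extra work because the column degrees $Y_j := \sum_{i=1}^{m_n} X_{ij}$ are strongly dependent across $j$.

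For (\ref{emprow}), write $N_k(n) = \sum_{i=1}^{m_n} \mathbf{1}\{w(X_i) = k\}$, a sum of i.i.d.\ Bernoullis with common mean $\Pr[W_n = k]$; since $W_n \tod W$ and $W$ is $\N$-valued, $\Pr[W_n = k] \to \Pr[W = k]$ pointwise in $k$. Hoeffding's inequality then gives $\Pr[|m_n^{-1} N_k(n) - \Pr[W_n = k]| > \varepsilon] \leq 2\exp(-2\varepsilon^2 m_n)$, which is summable since $m_n \sim \alpha n$, so Borel--Cantelli yields (\ref{emprow}). For (\ref{incidences}), note that $\sum_k k N_k(n) = \sum_{i=1}^{m_n} w(X_i)$ is a sum of i.i.d.\ variables bounded by $r_1$; bounded convergence together with $W_n \tod W$ yields $\Exp[W_n] \to \Exp[W] = \rho'(1)$, and a second application of Hoeffding and Borel--Cantelli gives $m_n^{-1} \sum_i w(X_i) \to \Exp[W]$ a.s., so multiplying by $m_n/n \to \alpha$ delivers the claim; the column version is deterministically equal, as both sides enumerate the ones in $M$.

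For (\ref{empcol}) I handle the expectation and the concentration separately. By column symmetry, each entry satisfies $\Pr[X_{ij} = 1] = \Exp[W_n]/n$, so for fixed $j$, $Y_j$ is a sum of $m_n$ i.i.d.\ Bernoullis with parameter $q_n := \Exp[W_n]/n$; since $m_n q_n \to \mu$ and $q_n \to 0$, the Le Cam/Poisson approximation theorem gives $Y_j \tod \Po(\mu)$, whence $\Exp[\tilde N_k(n)/n] = \Pr[Y_1 = k] \to \re^{-\mu} \mu^k / k!$. To upgrade to a.s.\ convergence, view $\tilde N_k(n) = f(X_1, \ldots, X_{m_n})$ and observe that replacing a single row $X_i$ alters only those column degrees $Y_j$ with $j$ in the symmetric difference of the old and new supports of $X_i$, which has cardinality at most $2 r_1$; hence $\tilde N_k(n)$ changes by at most $2 r_1$, and McDiarmid's bounded differences inequality gives $\Pr[|\tilde N_k(n) - \Exp \tilde N_k(n)| > \varepsilon n] \leq 2 \exp(-\varepsilon^2 n^2/(2 r_1^2 m_n))$. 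The right-hand side is summable over $n$, and Borel--Cantelli closes the argument. The main obstacle is exactly this column step: unlike for the row statements, no exact i.i.d.\ decomposition is available, and both the Poisson limit (which uses $m_n q_n \to \mu$ with $q_n \to 0$) and the bounded differences constant (which uses uniform boundedness of the $W_n$) are essential here.
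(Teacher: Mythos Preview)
Your proof is correct and follows essentially the same approach as the paper: both arguments compute the expectation (using $W_n \tod W$ for the row counts and binomial-to-Poisson convergence for the column counts) and then apply a bounded-differences concentration inequality together with Borel--Cantelli, exploiting the uniform bound $r_1$ on row weights to control the effect of resampling a single row. The only cosmetic differences are that the paper phrases the concentration step via the Doob martingale and Azuma--Hoeffding rather than McDiarmid directly, and derives (\ref{incidences}) from (\ref{emprow}) by summing the finitely many $k m_n^{-1} N_k(n)$ rather than applying Hoeffding afresh to $\sum_i w(X_i)$.
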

\begin{proof}
First note that ${m_n}^{-1} \Exp [N_k(n)] =
 \Pr [ W_n =k]$, which converges to $\Pr[W=k]$ by assumption.
To deduce almost sure convergence from this convergence in means,
we use the Azuma--Hoeffding inequality in a standard way, as follows.
Fix $n$ and for $1 \leq i \leq m_n$
let $\FF_i$ be the $\sigma$-algebra generated
by the rows $X_1,\ldots,X_i$ of $M(n,m_n)$.
Define  $\xi_i = \Exp [ N_k(n) \mid \FF_i]$, with $\xi_0 = \Exp[N_k(n)]$.
Since resampling a single row changes the number of rows of weight
$k$ by at most 1, we have for $1 \leq i \leq m$ that
\[
| \xi_i - \xi_{i-1} | = |\Exp [ N_k(n) - N_k(n,i) \mid \FF_i]| \leq 1,
\]
where $N_k(n,i)$ is defined like $N_k(n)$ but based on a matrix
with the $i$th row resampled.
By the Azuma--Hoeffding
inequality  applied to the martingale $(\xi_0,\ldots,\xi_{m_n})$ we
obtain  for any $\eps >0$ that
\[
\Pr [ | N_k(n) - \Exp[N_k(n) ]| > \eps n ]
\leq 2 \exp( - \eps^2 n/2),
\]
so by the first Borel--Cantelli lemma, we have
$ | N_k(n) - \Exp[N_k(n) ]| \leq \eps n $ for
all but finitely many $n$ almost surely. Combined with
the convergence of the mean, this gives us
(\ref{emprow}).

The remaining two parts of the lemma use the assumption $\Pr [ W \leq r_1 ] =1$ for $r_1 < \infty$.
To prove (\ref{empcol}) note that
the weight of the first column (or any other column) of
$M(n,m_n)$ is binomially distributed
with parameters $m_n $ (number of trials)
 and $\Exp[W_n]/n$ (probability of success).
Hence
$\Exp[\tilde{N_k}(n)/n] =
\Pr[{\rm Bin}(m_n,\Exp[W_n]/n) =k]$, and
by binomial-Poisson convergence this tends to
$\re^{-\mu} \mu^k/k!$
as $n \to \infty$. Given this convergence of means, we
may prove (\ref{empcol}) by a similar argument (based on
 the Azuma--Hoeffding inequality)  to the one used to prove
(\ref{emprow}), since resampling a single row changes
the number of columns of degree $k$ by at most $r_1$.

For the final statement in the lemma, we have that
\begin{align*} n^{-1} \sum_{k \geq 0} k N_k (n)   = (m_n/n) \sum_{k = 0}^{r_1} k m_n^{-1} N_k (n)
  \to \alpha \sum_{k=0}^{r_1} k \Pr [ W = k] , \ \mathrm{a.s.}, \end{align*}
by (\ref{emprow}), and then  (\ref{incidences}) follows.
\end{proof}

\begin{corollary}
\label{transfer}
Consider the random matrix model $M(n,m_n)$ with row weight distribution
$W_n \tod W$, where the $W_n$ are uniformly bounded.
Suppose that $m_n / n\to \alpha>0$.
Then, a.s.,
taken as   hypergraph incidence matrices the sequence $M(n,m_n)$ defines a sequence of uniform random hypergraphs
whose row weight   and vertex degree distributions
satisfy  (\ref{uniflimits})
with $\rho_w$ and $\nu_d$ given by $\rho_w = \Pr [ W =k]$
and $\nu_d = \re^{-\mu} \mu^d /d!$   respectively,
where  $\mu:=\alpha \Exp[W]$.
\end{corollary}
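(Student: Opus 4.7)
\medskip

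\noindent\textbf{Proof plan for Corollary \ref{transfer}.}
The plan is to verify two things: (a) that $M(n,m_n)$, viewed as a hypergraph incidence matrix, has the uniform distribution given its row and column degree sequences, so that it fits the uniform random hypergraph framework of Section \ref{sec:fluid}; and (b) that the empirical distributions of row weights and column degrees satisfy (\ref{uniflimits}) almost surely with the claimed $\rho_w$ and $\nu_d$, and that the regularity hypotheses of Theorem \ref{t:2core} are met.

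For (a), the key observation is the following. By construction the rows $X_1,\ldots,X_{m_n}$ of $M(n,m_n)$ are independent, and, conditional on $w(X_i)=k$, the row $X_i$ is uniform on $\{x \in \{0,1\}^n : w(x)=k\}$. Hence, conditional on the row-weight vector $(W_{n,1},\ldots,W_{n,m_n})$, the matrix $M(n,m_n)$ is uniformly distributed on the set of $m_n \times n$ binary matrices having those row sums. Further conditioning on the column-degree vector preserves uniformity: among all matrices with a specified row-sum sequence, those matching any given column-sum sequence form a subset on which the conditional law is still uniform. Translated into hypergraph language (the dictionary being set up in Section \ref{sec:hypermodel}), this says that $M(n,m_n)$, conditional on its hyperedge-weight multiplicities $\bw_n$ and vertex-degree multiplicities $\bd_n$, is a uniform random hypergraph in the sense of Section \ref{sec:fluid}. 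Note that $\Pr[W \geq 1] = 1$ ensures there are no empty hyperedges (zero rows), as required.

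For (b), I would simply invoke Lemma \ref{emplem}. The a.s.\ convergence (\ref{emprow}) says $m_n^{-1} N_k(n) \to \Pr[W=k]$, giving the first limit in (\ref{uniflimits}) with $\rho_w = \Pr[W=w]$; and (\ref{empcol}) says $n^{-1}\tilde N_k(n) \to \re^{-\mu}\mu^k/k!$, giving the second limit in (\ref{uniflimits}) with $\nu_d = \re^{-\mu}\mu^d/d!$, where $\mu = \alpha \Exp[W] = \alpha \rho'(1)$. To check the auxiliary hypotheses of Section \ref{sec:fluid}: the uniform boundedness of the $W_n$ forces $\rho_w = 0$ for all sufficiently large $w$; and the limiting Poisson$(\mu)$ degree distribution has all moments finite, so $\sum_{d \geq 1} \nu_d d^\beta < \infty$ for every $\beta > 0$. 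The compatibility relation $\sum_{k \geq 1} k w_n(k) = \sum_{k \geq 0} k d_n(k)$ holds deterministically for every realization, and both sides divided by $n$ converge a.s.\ to $\mu$ by (\ref{incidences}), so $\rho'(1) = \mu/\alpha = \Exp[W] < \infty$ and $\nu'(1) = \mu$, consistent with (\ref{gf2}).

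There is no substantial obstacle: the essential probabilistic work — concentration of the row-weight and column-degree empirical distributions around their limits — is already encapsulated in Lemma \ref{emplem}, proved there by a routine Azuma--Hoeffding argument. The content of Corollary \ref{transfer} is therefore almost entirely structural, packaging these facts together with the conditional uniformity observation so that Theorem \ref{t:2core} may be applied to $M(n,m_n)$ in subsequent sections.
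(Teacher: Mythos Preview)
Your proposal is correct and matches the paper's approach: the paper likewise first argues conditional uniformity (phrased via invariance of the law of $M(n,m)$ under row and column permutations) and then invokes Lemma~\ref{emplem} for the almost-sure convergence of the empirical row-weight and column-degree distributions. Your treatment is in fact slightly more thorough in checking the auxiliary hypotheses of Section~\ref{sec:fluid} (bounded support of $\rho$, finite moments of the Poisson limit, compatibility of totals), which the paper leaves implicit.
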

\begin{proof}
Since the distribution of $M(n,m)$ is
invariant under permutations of the rows or columns,
conditional on the empirical distribution of row and column
weights, all possible outcomes with those row and column
weight distributions are equally likely, so this conditional
distribution is indeed uniform. Moreover by
Lemma \ref{emplem} the limiting proportion of rows of weight  $k$ is given by $\Pr[W=k]$ and
the limiting proportion of columns of
 degree $k$ is given by $\Pr[D=k]$
where
$D\sim$ Po$(\mu)$,
 with $\mu:=\alpha \rho'(1)$.
Hence conditionally on this sequence of empirical distributions, almost surely we have a sequence of random matrices satisfying the
hypotheses of
 Section \ref{sec:fluid}.
\end{proof}

In the notation of  Section \ref{sec:fluid}, in this case $\nu (s) = \sum_{d=0}^\infty \re^{-\mu} \frac{(s\mu)^d}{d!} = \re^{\mu (s-1)}$
is the generating function of a Po$(\mu)$ random variable, so, by (\ref{gf2}),
the pair (\ref{e:pgfs})
becomes
\[
\lambda (s)
=  \re^{\mu (s - 1)}; ~~~
\sigma (s) = \frac{\rho'(s) }{\rho'(1) }.
\]

In this case, we
have from (\ref{varphidef}) that
\[ \varphi (s) := 1-\lambda (1-\sigma (s)) = 1 - \re^{- \mu \sigma (s)} = 1 - \re^{- \alpha \rho'(s)} ,\]
and to emphasize the dependence on $\alpha$ we will use the notation $\varphi_\alpha$ for $\varphi$ from now on. 
Recall from (\ref{sstar})  that $\starg$ was defined as the largest
 $s \in [0,1)$ for which $\varphi_\alpha (s) =s$.
In order for the model of this section to fit into the setting discussed in
 Section \ref{sec:fluid}, we need to assume that $\sigma_0 =0$ and $\lambda_0 \notin \{0,1\}$.
Here $\lambda_0 = \re^{-\mu} = \re^{-\alpha \Exp[W]}$ and $\sigma_0 = \frac{\Pr[ W=1]}{\Exp[W]}$.
So it   suffices to assume that $\alpha >0$,
 $\Pr [ W \geq 2] =1$, and $\Exp[ W] < \infty$; in this case the argument in Section \ref{sec:fluid} 
shows that $\starg$ is well defined.

Note that $g^*$ depends both on $\rho$ and on $\alpha$; in this section we
 write $\starg = g^*(\alpha)$ to emphasize the dependence on $\alpha$;
we will show (see Lemma \ref{gstarlem}) that the present definition is equivalent to that at  (\ref{gs0}) given in Section \ref{sec:general}.
For any solution $s \in [0,1)$ to $\varphi_\alpha (s) = s$,  so in particular for $s = \starg(\alpha)$,
 provided $\rho'(s) \neq 0$, we have $\alpha = h(s)$ as given by (\ref{hdef}).

We note some facts about $g^*(\alpha)$; recall the definition of $\alpha_\rho^\sharp$ from (\ref{alphasharp}).

\begin{lemma}
\label{gstarlem}
Suppose that $\Pr [ W \geq 2] = 1$ and $\Exp[W] < \infty$.
 With the convention $\sup \emptyset = 0$, the definition (\ref{gs0}) is equivalent to
 the definition (\ref{sstar}) of  $g^*(\alpha)$ 
as the largest solution of $\varphi_\alpha (s) = s$.
Also,
 $\alpha_\rho^\sharp \in [0,1]$,
and  $g^*(\alpha ) = 0$ for all $\alpha \in [0, \alpha_\rho^\sharp)$,
and for $\alpha > \alpha_\rho^\sharp$, the function $g^*(\alpha)$ is positive and strictly increasing,
with $g^* (\alpha) \uparrow 1$ as $\alpha \to \infty$.

Now assume also that
  $\Pr [ W \geq 3 ] =1$ and $\Exp[W^2] < \infty$.
  Then   the following hold.
\begin{itemize}
\item[(i)] We have $g^*(\alpha_\rho^\sharp ) \in (0,1)$ and
$\alpha_\rho^\sharp =
h ( g^* ( \alpha_\rho^\sharp ) ) \in (0,\infty)$.
\item[(ii)] The function
$g^*$ is right continuous, and
there is a finite set $\DD_\rho \subset (0,\infty)$, with $\alpha_\rho^\sharp = \inf \DD_\rho$,
such that $g^*$ is continuous apart from jumps at points of $\DD_\rho$. For each $\alpha \in \DD_\rho$,
 $h(g^*(\alpha)) = \alpha$ is a local minimum for $h$.
  \item[(iii)] If $\alpha \notin \DD_\rho$, then $g^*(\alpha)$ satisfies the crossing condition (\ref{e:crossing}).
\end{itemize}
\end{lemma}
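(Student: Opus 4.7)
The plan is to translate everything into analytic properties of $h$ and reduce the fixed-point equation $\varphi_\alpha(s)=s$ to $h(s)=\alpha$. First I would observe that, for $s\in(0,1)$,
\[
\varphi_\alpha(s)=s
\quad\Longleftrightarrow\quad
1-e^{-\alpha\rho'(s)}=s
\quad\Longleftrightarrow\quad
\alpha\rho'(s)=-\log(1-s)
\quad\Longleftrightarrow\quad
h(s)=\alpha,
\]
using $\rho'(s)>0$ for $s>0$ (valid once $\Pr[W\geq 2]=1$); and $s=0$ is also a fixed point since $\rho'(0)=\Pr[W=1]=0$. Since $-\log(1-s)\to\infty$ while $\rho'(s)\to\Exp[W]<\infty$, we have $h(s)\to\infty$ as $s\uparrow 1$, and $h$ is continuous on $(0,1)$. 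Consequently the set $\{x\in(0,1):h(x)\leq\alpha\}$ is bounded away from $1$, and when non-empty its supremum $s^\ast$ satisfies $h(s^\ast)\leq\alpha$ (take a sequence) while $h(s)>\alpha$ for $s>s^\ast$, forcing $h(s^\ast)=\alpha$ by continuity. This identifies the definition $(\ref{gs0})$ with the largest solution in $(0,1)$ of $h(s)=\alpha$, or with the trivial fixed point $0$ otherwise, i.e.\ with $(\ref{sstar})$.

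With the equivalence in hand, monotonicity of $g^\ast$ is immediate from the monotonicity of $\{h\leq\alpha\}$ in $\alpha$, and the trichotomy $g^\ast(\alpha)=0$ for $\alpha<\alpha_\rho^\sharp$ versus $g^\ast(\alpha)>0$ for $\alpha>\alpha_\rho^\sharp$ is just the definition of $\alpha_\rho^\sharp$. Strict monotonicity past $\alpha_\rho^\sharp$ follows because $h(g^\ast(\alpha_1))=\alpha_1<\alpha_2$ by continuity allows us to push slightly to the right of $g^\ast(\alpha_1)$ and still satisfy $h\leq\alpha_2$. The limit $g^\ast(\alpha)\uparrow 1$ holds because $h(s_0)<\infty$ for every $s_0<1$, so once $\alpha\geq h(s_0)$, $g^\ast(\alpha)\geq s_0$. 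For part~(i), writing $k_0:=\min\{k:\Pr[W=k]>0\}\geq 3$, I would use $\rho'(s)\sim k_0\Pr[W=k_0]s^{k_0-1}$ as $s\downarrow 0$ to conclude $h(s)\sim s^{2-k_0}/(k_0\Pr[W=k_0])\to\infty$; combined with $h\to\infty$ at $1$ and continuity, $h$ attains its positive global minimum $\alpha_\rho^\sharp$ at an interior point, giving $g^\ast(\alpha_\rho^\sharp)\in(0,1)$ with $h(g^\ast(\alpha_\rho^\sharp))=\alpha_\rho^\sharp$.

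For part~(ii), right-continuity of $g^\ast$ follows from the equivalence: if $\alpha_n\downarrow\alpha$ and $s_n:=g^\ast(\alpha_n)\downarrow s_\infty\geq g^\ast(\alpha)$, continuity of $h$ gives $h(s_\infty)=\alpha$, so $s_\infty\leq g^\ast(\alpha)$. The heart of the argument is finiteness of $\DD_\rho$. If $g^\ast$ has a jump at $\alpha$ from $s_1<s_2$, a direct analysis shows $h\geq\alpha$ throughout $(s_1,s_2]$ with equality at $s_2$, so $s_2$ is a local minimum of $h$ and $\alpha$ is its value there. From
\[
h'(s)=\frac{\rho'(s)/(1-s)+\log(1-s)\rho''(s)}{\rho'(s)^2}
\]
(well defined since $\Exp[W^2]<\infty$ makes $\rho''$ finite at $1$), the term $\rho'(s)/(1-s)$ dominates as $s\uparrow 1$, giving $h'>0$ near $1$; and a power-series expansion shows the leading behaviour of the numerator as $s\downarrow 0$ under $\Pr[W\geq 3]$ is $k_0(2-k_0)\Pr[W=k_0]s^{k_0-1}<0$, giving $h'<0$ near $0$. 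Hence all local minima of $h$ lie in a compact subinterval $[a,b]\subset(0,1)$; since $\rho$ is a power series of radius $\geq 1$, $h$ is real-analytic on $(0,1)$, so $h'$ has only finitely many zeros in $[a,b]$, giving finitely many local minima and hence finite $\DD_\rho$. Since $g^\ast$ jumps from $0$ to $g^\ast(\alpha_\rho^\sharp)>0$ at $\alpha_\rho^\sharp$, that value lies in $\DD_\rho$ and is clearly its infimum.

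Finally, for (iii), the crossing condition $(\ref{e:crossing})$ is equivalent to $g^\ast(\alpha)=\sup\{s\in(0,1):h(s)<\alpha\}$, using $\varphi_\alpha(s)>s\iff h(s)<\alpha$ derived in paragraph one. If $\alpha\notin\DD_\rho$, then $g^\ast$ is continuous at $\alpha$, so picking $\alpha'\uparrow\alpha$ yields $g^\ast(\alpha')\uparrow g^\ast(\alpha)$ with $h(g^\ast(\alpha'))=\alpha'<\alpha$, producing points $s<g^\ast(\alpha)$ arbitrarily close with $h(s)<\alpha$. The main obstacle throughout is the endpoint analysis of $h'$ and the invocation of real-analyticity to bound local minima in a compact subinterval; once this is in place, the rest is bookkeeping of continuity and monotonicity.
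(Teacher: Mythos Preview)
Your proof is correct and follows essentially the same route as the paper: both reduce the fixed-point equation to the level-set description via $\varphi_\alpha(s)\lessgtr s \Leftrightarrow h(s)\gtrless\alpha$, establish the basic monotonicity and limit facts from continuity of $h$, and for (ii) argue that jumps of $g^*$ occur only at local minima of $h$ and that such minima are finite by real-analyticity of $h'$ together with an endpoint analysis showing $h'<0$ near $0$ and $h'>0$ near $1$. One small omission: you do not verify the claim $\alpha_\rho^\sharp\leq 1$, which the paper obtains by integrating the inequality $\alpha_\rho^\sharp\,\rho'(x)\leq -\log(1-x)$ over $(0,1)$; and in (iii) your continuity argument tacitly assumes $\alpha>\alpha_\rho^\sharp$ (so $g^*(\alpha')>0$), though the remaining case $\alpha<\alpha_\rho^\sharp$ is trivial.
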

\begin{proof}
Since $\Pr [ W \geq 2 ]=1$ and $\Exp[W]< \infty$,
 we have $\varphi_\alpha(1) <1$ and
$\varphi_\alpha(0)=0$.  Therefore by continuity 
we may rewrite
(\ref{sstar}) as
$$
g^*(\alpha) = \sup \{s \in (0,1): \varphi_\alpha(s) \ge s\},
$$
using the convention $\sup \emptyset =0$.
By the definition  (\ref{hdef}) of $h$,
for $s \in (0,1)$ it is easy to check 
that
$\varphi_\alpha(s) \geq s$ if and only if $h(s) \leq \alpha$,
and this shows that (\ref{gs0}) and (\ref{sstar}) give equivalent
definitions of $g^*(\alpha)$.

By (\ref{hdef}) and subsequent remarks, 
$h(x)$ is positive, continuous in $x$, and tends to infinity 
as $x \uparrow 1$. By 
the definition
(\ref{alphasharp}), and the subsequent remark,
 $\alpha_\rho^\sharp \in [0,1]$. 
By the definition
 (\ref{gs0}), it is clear that $g^*(\alpha) =0$ for
$\alpha \in [0,\alpha_\rho^\sharp)$, and
the fact that $g^*(\alpha) $ is positive and strictly
increasing for
$\alpha \in (\alpha_\rho^\sharp,\infty)$
is easily deduced
from the continuity of $h$.
Also, given $\varepsilon \in (0,1)$ we can choose
$\alpha $ with $h (1 -\varepsilon) < \alpha$  so that
$g^*(\alpha) > 1-\varepsilon$, and together with
the monotonicity of $g^*$ this shows $g^* (\alpha) \to 1$ as
$\alpha \to \infty$.

For part (i), under the extra assumption $\Pr[W \geq 3]=1$
 we have $ h $ going to infinity at 0 and at 1,
and by continuity $h$ attains its infimum on $(0,1)$,
so  using (\ref{alphasharp})
and (\ref{gs0}) we have that $g^*(\alpha^\sharp_\rho)$ is the supremum
of a non-empty compact set contained in $(0,1)$,
and so lies in (0,1). The last part of (i) also follows
from the continuity of $h$.

For part (ii), under the extra assumption
$\Exp[W^2]< \infty$, note first that if $ 0  \leq y <
 \alpha_\rho^\sharp$ then $g^*(y) =0 $. Hence
$g^*$ is continuous at $y$ for all $y < \alpha_ \rho^\sharp$.

 Now let  $y \geq \alpha_\rho^\sharp$;
 note that by (\ref{gs0}) and continuity of $h$, we have $h(g^*(y)) =y$.
 Take a monotonic sequence $y_n $ tending to $y$; set 
$x_n = g^*(y_n)$. 

Suppose first that $y_n \downarrow y$.
Then the sequence $x_n$ is nonincreasing;
denoting the limit by $x_\infty$ we have  $h(x_n) =y_n$ so
$h(x_\infty) =y$ by continuity, and therefore $x_\infty \leq g^*(y)$
by (2.9).
Since also $x_n \geq g^*(y)$ by monotonicity we have
$x_\infty = g^*(y)$; hence $g^* $ is right-continuous at $y$.

Now suppose instead  that  $y_n \uparrow y$.
Set $x = g^*(y)$. If $h$ does not have a local minimum at
$x$ then $\liminf g^*(y_n) \geq x$, so that 
$x_n \to x$, and hence
$g^*$ is left-continuous at $y$. Hence, if
 $g^*$ is discontinuous at $y$ then
$h$ has a local
minimum at $g^*(y)$.

The function $h'$ is analytic and non-constant on $(0,1)$
so its zeros do not accumulate except possibly at 0 or
1. However  $h'(x) =0$ implies $\rho'(x)/\rho''(x) = - (1-x) \log (1-x)$,
so by the assumption $\Exp[W^2] < \infty$ there
exists $\varepsilon > 0$ such that $h'(x) \neq 0$ for
  $ 1-\varepsilon < x < 1$
and for $0  < x < \varepsilon $;
 for the latter case we use the fact that, as $x \downarrow 0$,
\[  \frac{\rho''(x)}{\rho'(x)} (1-x) \log (1-x) \to r_0 -1 > 1, \]
if $r_0 \geq 3$ is the smallest possible value of $W$.
 Thus $h$ has only finitely
many local minima in $(0,1)$, and hence $h$ has
 a local minimum at  
$g^*(y)$ for at most finitely many $y$. This completes the proof
of (ii).

For part (iii) note that, for $s \in (0,1)$,
 $\varphi_\alpha(s) > s$ if and only if
$h(s) < \alpha$, so (\ref{e:crossing})
gives $g^*(\alpha) = \sup \{s \in (0,1): h(s) < \alpha\}$,
 which for $\alpha \notin {\cal D}_\rho$ 
 agrees with the definition (\ref{gs0}).
\end{proof}

To apply the results in Section \ref{sec:fluid} to $M(n,m_n)$,
 we need to assume that  
  $g^*(\alpha)$ either is  zero or  satisfies (\ref{e:crossing}).
Lemma \ref{gstarlem} shows that a sufficient condition  for this is that
  $\alpha \in (0,\infty)$, $\alpha \notin \DD_\rho$.

By  Theorem \ref{t:2core}(ii) and (\ref{incidences}), $n^{-1}$ times the
number of incidences which survive in the 2-core converges a.s.\ to
\begin{equation}
\label{2coreinc}
\mu g^* \sigma( g^* ) = (\alpha \rho'(1)) g^* \frac{\rho'( g^* ) }{\rho'(1) }
= \alpha g^* \rho'(g^*) = - g^* \log (1 - g^* ).
\end{equation}
By Theorem \ref{t:2core}(iii) and (\ref{empcol}),
for $d \geq 2$
 the proportion of original vertices whose degree in the 2-core is $d$ is
asymptotically
\begin{align}
\label{2coredegs}
\sum_{k \geq d} \re^{-\mu} \frac{\mu^k}{k!} \binom{k}{d} \sigma( g^* )^d (1 - \sigma( g^* ))^{k - d}
& = \re^{-\mu} \frac{ (\mu \sigma (g^*))^d}{d!} \sum_{j \geq 0} \frac{ \mu^j (1- \sigma(g^*))^j}{j!} 
\nonumber\\
& = \re^{-\mu \sigma (g^*)} \frac{(\mu \sigma (g^*))^d}{d!} ,
\end{align}
the remainder having degree $0$ in the $2$-core (some columns in the core have degree
 zero because in the algorithm of Section \ref{sec:hypermodel}, we never delete any columns). In other words,
 the 2-core vertex degrees have the
distribution of a random variable $D \1 \{ D \neq 1\}$, where
 $D \sim {\rm Po} (\mu \sigma( g^* ))$;
by (\ref{2coreinc}),
$\mu \sigma( g^* ) = \alpha \rho'( g^* )$.
As a check on the previous calculation (\ref{2coreinc}) of the number of surviving incidences,
$n^{-1}$ times the total number of incidences
in the 2-core should converge to the mean of the vertex-degree distribution, which is
$
\alpha \rho'( g^* ) (1 - \re^{-\alpha \rho'( g^* )}) = \alpha g^* \rho'(g^* )$,
as   in (\ref{2coreinc}).

The key issue  in predicting hypercycles and
rank deficiency is whether
the number of rows in the 2-core exceeds the number of
  occupied columns in the 2-core,
which is treated in Theorem \ref{c:alphabar};
 a related result appears in \cite{cooper2}. Recall the definition
 of $\psi(g)$ from (\ref{psidef}) and $\ubar{\alpha}_\rho$ from (\ref{ubardef}).

\begin{theorem} \label{c:alphabar}
Suppose $W_n$ are uniformly bounded and $\Pr [ W \geq 3] = 1$. Let $\alpha \in (0,\infty)$.
 Consider the 2-core of the random incidence matrix $M(n,m_n)$
where $m_n/n \to \alpha$ as $n \to \infty$.
Then
if $\alpha < \alpha_\rho^\sharp$, the number
 of rows   in the 2-core is $o(n)$, a.s.

Now suppose $\alpha > \alpha_\rho^\sharp$, so $g^* = g^*(\alpha) >0$, and suppose that
 $\alpha \notin \DD_\rho$.
 Then:
\begin{itemize}
\item[(i)]
$n^{-1}$ times the number of rows in the 2-core converges a.s.\ to $\alpha \rho(g^*)$.
\item[(ii)] $n^{-1}$ times the number of occupied columns in the 2-core converges a.s.\ to
$1 - \re^{-\nu}(1 + \nu)$, where $\nu:=\alpha \rho'(g^* )$.
\item[(iii)]
Almost surely, for all $n$ large enough, the $2$-core has more rows than occupied
columns if $\psi (g^*(\alpha)) <0$ but has fewer rows than occupied columns if $\psi (g^*(\alpha)) >0$.
Moreover, there exists $\delta >0$ such that if $\alpha \in (\ubar \alpha_\rho, \ubar \alpha_\rho +\delta)$,
for all $n$ large enough,
 the $2$-core has more rows than columns
 and so the corresponding hypergraph has a hypercycle.
 \end{itemize}
\end{theorem}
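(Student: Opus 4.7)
The plan is to invoke Theorem \ref{t:2core} via Corollary \ref{transfer}, which says that (almost surely) $M(n,m_n)$, conditioned on its empirical row weight and column degree profiles, is a uniform random hypergraph to which the 2-core results of Section \ref{sec:fluid} apply with $\rho_w = \Pr[W=w]$ and $\nu_d = \re^{-\mu}\mu^d/d!$ where $\mu = \alpha \Exp[W]$. In this setting the incidence generating functions (\ref{e:pgfs}) specialize to $\lambda(s) = \re^{\mu(s-1)}$ and $\sigma(s) = \rho'(s)/\rho'(1)$, so that $\varphi_\alpha(s) = 1-\re^{-\alpha\rho'(s)}$ and the present $g^*(\alpha)$ from (\ref{sstar}) agrees with (\ref{gs0}) by Lemma \ref{gstarlem}. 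For $\alpha < \alpha_\rho^\sharp$ we have $g^*(\alpha)=0$, and Theorem \ref{t:2core}(i) immediately gives a vanishing proportion of surviving rows, i.e.\ $o(n)$ rows a.s.

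For $\alpha > \alpha_\rho^\sharp$ with $\alpha \notin \DD_\rho$, Lemma \ref{gstarlem}(iii) verifies the crossing condition (\ref{e:crossing}), so Theorem \ref{t:2core}(ii) applies and a proportion $\rho(g^*)$ of the $m_n$ rows survives; combining with $m_n/n\to\alpha$ (Lemma \ref{emplem}) yields part (i). For part (ii), Theorem \ref{t:2core}(iii) combined with the Poisson initial column-degree distribution (Lemma \ref{emplem}) gives the 2-core column-degree calculation (\ref{2coredegs}): the limiting fraction of columns of degree exactly $d\ge 2$ is $\re^{-\nu}\nu^d/d!$ with $\nu=\mu\sigma(g^*)=\alpha\rho'(g^*)$, hence the fraction of columns of positive degree in the 2-core is $\sum_{d\ge 2}\re^{-\nu}\nu^d/d! = 1 - \re^{-\nu}(1+\nu)$, which is (ii).

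For part (iii), I would subtract the (ii)-limit from the (i)-limit and use the fixed-point relation $\varphi_\alpha(g^*)=g^*$, which in this setting reads $\re^{-\nu} = 1-g^*$, i.e.\ $\nu = -\log(1-g^*)$. A direct calculation
\begin{equation*}
\alpha\rho(g^*) - \bigl(1 - \re^{-\nu}(1+\nu)\bigr) = \alpha\rho(g^*) + \nu(1-g^*) - g^* = -\psi(g^*(\alpha)),
\end{equation*}
using $\nu/\rho'(g^*) = \alpha$, identifies the asymptotic row-minus-occupied-column difference (per $n$) with $-\psi(g^*(\alpha))$. Since both quantities converge a.s., strict inequality of the limits persists for all large $n$, giving the sign dichotomy of (iii). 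For the final claim, by definition (\ref{ubardef}) $\psi(g^*(\alpha))<0$ for some $\alpha$ arbitrarily close to $\ubar\alpha_\rho$ from above; using right-continuity of $g^*$ (Lemma \ref{gstarlem}(ii)) and continuity of $\psi$ there is a right-neighbourhood $(\ubar\alpha_\rho, \ubar\alpha_\rho + \delta_0)$ on which $\psi(g^*(\alpha))<0$. Since $\DD_\rho$ is finite, choose $\delta \in (0,\delta_0]$ small enough that $(\ubar\alpha_\rho,\ubar\alpha_\rho+\delta)\cap\DD_\rho = \emptyset$, so (iii) applies and the 2-core has more rows than occupied columns; as every vertex in the 2-core has degree $\ge 2$, the occupied columns coincide with $V(\mathcal{C})$, and Lemma \ref{hypercycle-2core}(iii) delivers a hypercycle.

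The main obstacle is really just bookkeeping: translating the a.s.\ statement of Theorem \ref{t:2core} (formulated for hypergraphs with prescribed degree sequences) to the random matrix model, which is handled cleanly by Corollary \ref{transfer}, and then performing the algebraic identification of $\alpha\rho(g^*) - 1 + \re^{-\nu}(1+\nu)$ with $-\psi(g^*)$. The delicate point is avoiding the jump set $\DD_\rho$ in the final existence statement, which is handled by choosing $\delta$ small and using finiteness of $\DD_\rho$.
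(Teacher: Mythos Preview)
Your proposal is correct and follows essentially the same route as the paper's own proof: invoke Corollary~\ref{transfer} to place $M(n,m_n)$ in the uniform-hypergraph framework of Theorem~\ref{t:2core}, read off parts (i) and (ii) from Theorem~\ref{t:2core}(ii)--(iii) and the Poisson column-degree computation (\ref{2coredegs}), and for (iii) identify the row--column difference with $-\psi(g^*(\alpha))$ via the fixed-point relation $\re^{-\nu}=1-g^*$. Your algebraic identification is slightly more compact than the paper's (which rewrites the inequality $\alpha\rho(g^*)>1-\re^{-\nu}(1+\nu)$ step by step before substituting $\alpha=h(g^*)$), but the content is identical; the final right-neighbourhood argument and appeal to Lemma~\ref{hypercycle-2core}(iii) also match.
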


Figure \ref{fig2} shows an example of some of the more exotic behvaiour that can occur in the random weight setting. In the case where $\rho(s) = 0.9183 s^3+ 0.04 s^{19} + 0.0417 s^{41}$,
$\psi ( g^*(\alpha))$ changes sign several times, and so Theorem \ref{c:alphabar} shows that as $\alpha$ increases from $0$  the $2$-core switches
from having asymptotically more columns than rows to having more rows than columns not just once (at $\ubar \alpha_\rho$), but {\em twice}. This non-monotone behaviour
does not occur in the fixed weight case. Proposition \ref{psiprop} below, and the subsequent discussion, explains some of the features in the figure.

\begin{figure}[!h]
\center
\includegraphics[width=7cm]{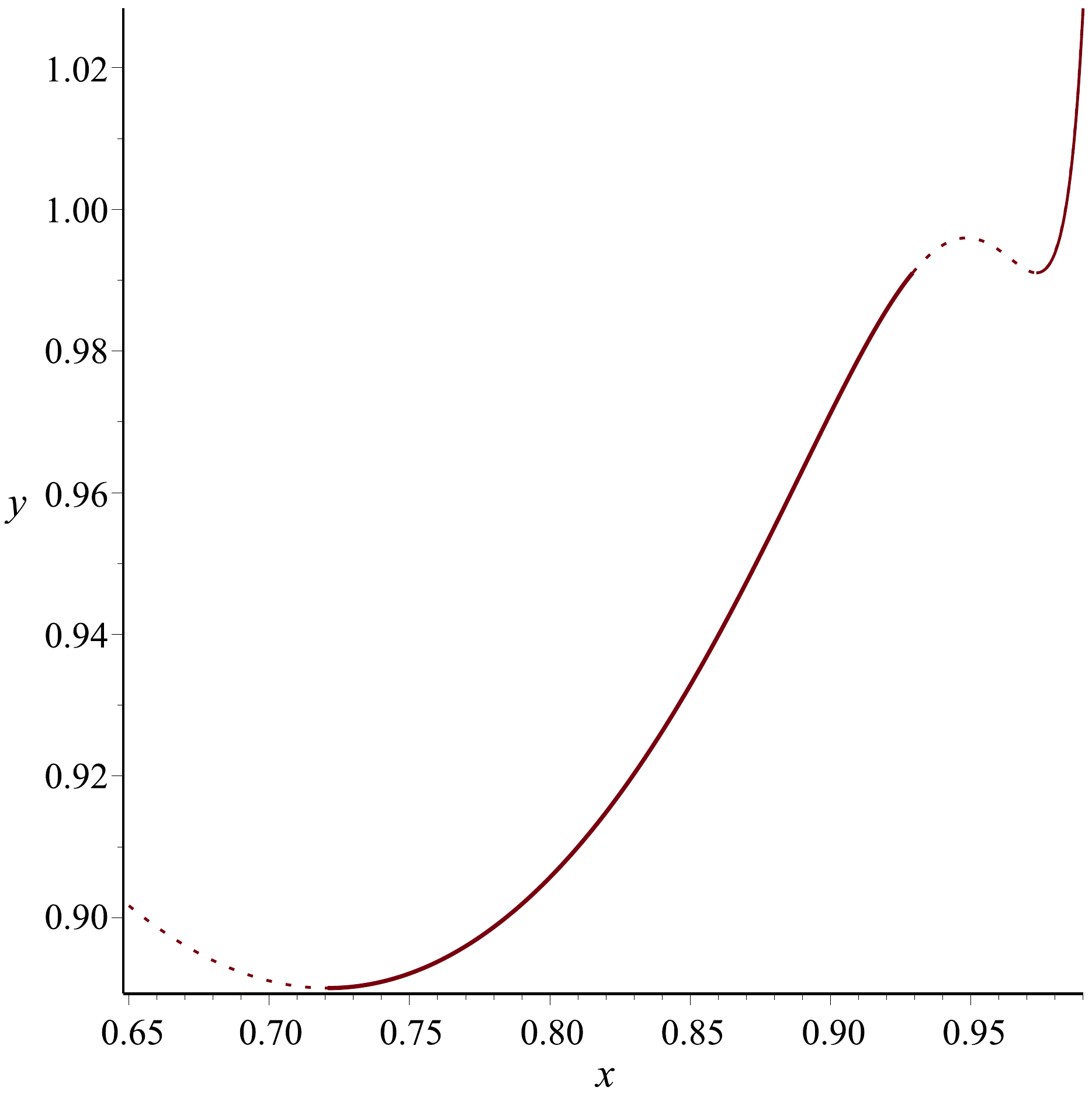} \quad
\includegraphics[width=7cm]{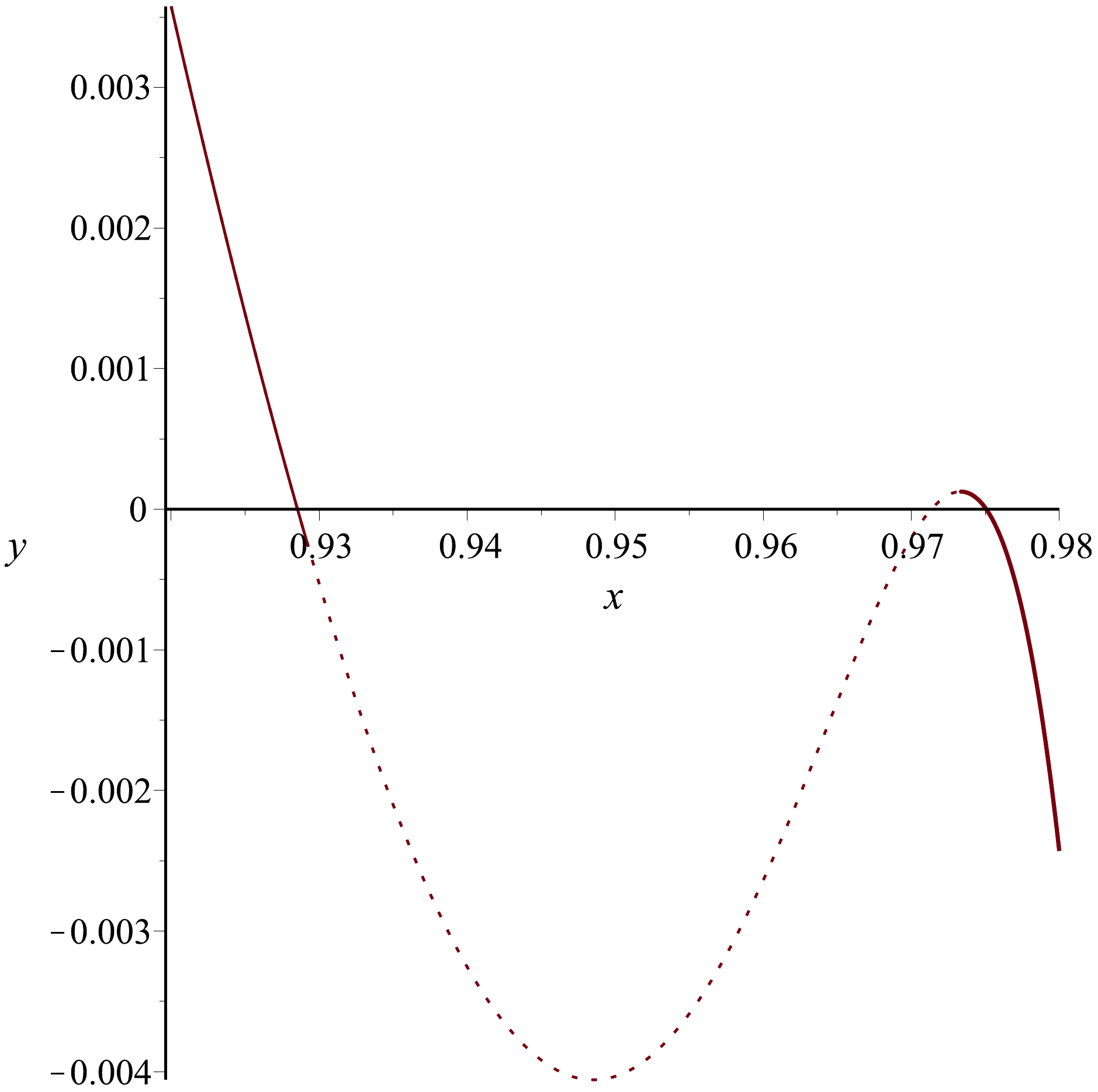}
\caption{Example with $\rho(s) = 0.9183 s^3+ 0.04 s^{19} + 0.0417 s^{41}$. The left plot shows parts of the curves $y= h(x)$ (all the line) and $x = g^*(y)$ (solid line).
The right plot shows parts of the curves 
$y=  \psi(x)$ (dashed line) and the
 locus of $(g^*(\alpha),\psi(g^*(\alpha)))$ (solid line).
Again $g^*(\alpha)$ has two discontinuities, one 
at $\alpha = \alpha_\rho^\sharp \approx 0.890061$ and one at
$\alpha \approx 0.991044$, with the first corresponding to a jump from $g^* =0$ to $g^* \approx 0.720793$ and the second
to a jump from $g^* \approx 0.929269$ to $g^* \approx 0.973325$. The right plot shows  the three positive roots of $\psi(x)=0$.
The two of these roots achieved by
 $\psi(g^*(\alpha))$ are at $x \approx  0.928538$ and $x \approx 0.975069$. The first corresponds to
$\alpha = \ubar \alpha_\rho \approx 0.990686$ and the second to
 $\alpha \approx 0.991185$. Hence as $\alpha$ ranges in $(0,1)$,
$\psi (g^*(\alpha))$ changes sign from positive, to negative, to positive, and finally to negative again.}
\label{fig2}
\end{figure}

\begin{proof}[Proof of Theorem \ref{c:alphabar}.]
By Corollary \ref{transfer}, a.s.\ we have a sequence of random matrices satisfying the
hypotheses of
 Theorem \ref{t:2core}.
If $\alpha < \alpha_\rho^\sharp$, then $g^*(\alpha) = 0$, and Theorem \ref{t:2core}
shows that the 2-core has $o(n)$ rows. So from now on suppose that $\alpha > \alpha_\rho^\sharp$,
 so $g^* = g^*(\alpha) >0$ (see Lemma \ref{gstarlem}).

For the statement (i), note that out of $m_n \sim \alpha n$ rows, a proportion $\rho(g^* )$ survives, by Theorem \ref{t:2core}(ii).
For (ii), the discussion around (\ref{2coredegs}) implies that
the proportion of the $n$ original vertices
whose degree in the $2$-core is non-zero is obtained by
subtracting from $1$ the mass that a Po$(\nu)$ random variable places on
$\{0,1\}$.

For (iii),
we compare the
limits in (i) and (ii).
Suppose that these limits
satisfy
 \begin{equation}
 \label{morerows}
 \alpha \rho (g^*)  > 1 - \re^{-\alpha \rho'(g^*)} (1+ \alpha \rho'(g^*)).
 \end{equation}
By our assumptions on $\alpha$ and $W$, we have
$\rho(0)=\rho'(0) =0$ and $g^* >0$,
 which implies that $\rho (g^*)$ and $\rho'(g^*)$ are both positive.
Then we may rewrite (\ref{morerows}) as
\begin{align*}
\alpha \rho(g^* )
& >   1 - (1 - g^* )(1 + \alpha \rho'(g^* ) ) \\
& = g^* +  (1-g^* ) \log (1 - g^* ) ,
\end{align*}
using the definition of $g^*$.
Now substituting in $\alpha = h (g^*(\alpha))$ 
for $\alpha$ on the left-hand side of the last display (given $\rho'( g^*) > 0$)
 we may rewrite the last inequality as $\psi (g^*) < 0$, where $\psi$ is defined by (\ref{psidef}).
 Similarly, $\psi ( g^*) > 0$ is equivalent to
 \begin{equation}
 \label{morecols}
 \alpha \rho (g^*)  < 1 - \re^{-\alpha \rho'(g^*)} (1+ \alpha \rho'(g^*)).
 \end{equation}

If $\psi (g^*) <0$, then (\ref{morerows}) holds and the limit in (i) in strictly greater than the limit in (ii),
which shows that the $2$-core eventually has more rows than occupied columns, and vice versa if $\psi (g^*) >0$ (so that
(\ref{morecols}) holds). 

Recall the definition of $\ubar \alpha_\rho$ from (\ref{ubardef}).
We know from Lemma \ref{gstarlem}(iii) that $g^*$ has only
finitely many discontinuities. Either $\ubar \alpha_\rho$
is a continuity point for $g^*(\alpha)$ (and hence for $\psi(g^*(\alpha))$), or
else $\ubar \alpha_\rho \in \DD_\rho$
with $\psi ( g^* ( \ubar \alpha_\rho ) ) <0$ and no other point of
$\DD_\rho$ is in a neighbourhood of $\ubar \alpha_\rho$. In either
case, $\psi (g^*(\alpha)) <0$ for $\alpha$ in an interval of the form $[\ubar \alpha_\rho , \ubar \alpha_\rho +\delta )$
with $\delta>0$.
The final statement in the theorem,
about the existence of a hypercycle, follows from Lemma \ref{hypercycle-2core}(iii).
\end{proof}

We next state a result giving an upper bound for $\ubar \alpha_\rho$.
\begin{proposition}
\label{lessthan1}
  Suppose that $\Pr [ W \geq 3]=1$ and $\Exp [ W^2 ] < \infty$. Then
$\ubar \alpha_\rho \leq 1$.
\end{proposition}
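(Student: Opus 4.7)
The plan is to show directly that $\psi(g^*(\alpha)) < 0$ for every $\alpha > 1$; by the definition (\ref{ubardef}) of $\ubar \alpha_\rho$ as an infimum, this immediately gives $\ubar \alpha_\rho \leq 1$.

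The first preliminary is to confirm $\alpha_\rho^\sharp < 1$, so that $g^*(1) \in (0,1)$ is a genuine interior point. This uses the pointwise inequality $\alpha_\rho^\sharp \rho'(x) \leq -\log(1-x)$ noted just after (\ref{alphasharp}); integrating from $0$ to $1$ gives $\alpha_\rho^\sharp \leq 1$, and equality would force pointwise equality of the (continuous) integrands, which fails near $x=0$ since $\Pr[W\geq 3]=1$ makes $\rho'(x) = O(x^2)$ while $-\log(1-x) \sim x$ there.

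The analytic heart of the argument is a clean substitution: for any $\alpha \geq \alpha_\rho^\sharp$, the characterization of $g^*(\alpha)$ as the supremum in (\ref{gs0}) together with continuity of $h$ forces $h(g^*(\alpha)) = \alpha$, i.e.\ $-\log(1-g) = \alpha \rho'(g)$ at $g := g^*(\alpha)$. Substituting into (\ref{psidef}) gives the key simplification
\[\psi(g^*(\alpha)) = g + (1-g)\log(1-g) - \alpha \rho(g).\]
Setting $q(x) := x + (1-x)\log(1-x) - \rho(x)$, we then have $\psi(g^*(\alpha)) = q(g^*(\alpha)) + (1-\alpha)\rho(g^*(\alpha))$, so for $\alpha > 1$ (and $\rho(g^*(\alpha)) > 0$) it is enough to show $q \leq 0$ on a suitable set containing $g^*(\alpha)$.

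The remaining work is to prove $q < 0$ on $(g^*(1),1)$ and that $g^*(\alpha)$ lies in this interval for every $\alpha > 1$. The latter follows from strict monotonicity of $g^*$ on $(\alpha_\rho^\sharp,\infty)$ (Lemma \ref{gstarlem}) together with $\alpha_\rho^\sharp < 1$. For the former, note $q(1) = 0$ and $q'(x) = -\rho'(x) - \log(1-x)$; for $x \in (g^*(1),1)$ the sup-definition of $g^*(1)$ forces $h(x) > 1$, i.e.\ $-\log(1-x) > \rho'(x)$, so $q'(x) > 0$ there, and $q$ strictly increases to the endpoint $q(1)=0$, forcing $q<0$ on the interval. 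I expect no real obstacle beyond confirming the equality $h(g^*(\alpha)) = \alpha$ at every $\alpha > \alpha_\rho^\sharp$, not only at continuity points of $g^*$; this is built into the supremum definition once $h$ is known to be continuous.
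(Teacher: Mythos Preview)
Your proof is correct and takes a genuinely different route from the paper. The paper argues by contradiction using the \emph{probabilistic} interpretation: if $\ubar\alpha_\rho > 1$, one can find $\alpha' \in (1,\ubar\alpha_\rho)\setminus\DD_\rho$ with $\psi(g^*(\alpha'))>0$, so by Theorem~\ref{c:alphabar}(iii) the 2-core has asymptotic aspect ratio below $1$; but a direct combinatorial observation (each row deletion in the algorithm kills at least one occupied column) shows the 2-core aspect ratio cannot drop below that of the original matrix when the latter exceeds $1$, giving a contradiction. This relies on Theorem~\ref{c:alphabar}, hence ultimately on the random-hypergraph fluid limit of Theorem~\ref{t:2core}, and also uses the finiteness of $\DD_\rho$ (hence the $\Exp[W^2]<\infty$ hypothesis).

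Your argument is purely analytic: the substitution $\log(1-g)=-\alpha\rho'(g)$ at $g=g^*(\alpha)$ collapses $\psi(g^*(\alpha))$ to $q(g)+(1-\alpha)\rho(g)$ with $q(x)=x+(1-x)\log(1-x)-\rho(x)$, and then the monotonicity of $q$ on $(g^*(1),1)$ up to $q(1)=0$ is exactly the statement $h(x)>1$ there. This avoids the 2-core machinery entirely, gives a direct rather than contradiction argument, and in fact only uses $\Pr[W\geq 3]=1$ and $\Exp[W]<\infty$ (you never invoke the parts of Lemma~\ref{gstarlem} requiring a second moment). The paper's approach has the merit of explaining \emph{why} $\ubar\alpha_\rho\leq 1$ in terms of the 2-core aspect ratio; yours is shorter, self-contained, and slightly more general in its hypotheses.
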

\begin{proof}
We know from Lemma \ref{gstarlem} that $\alpha_\rho^\sharp \leq 1$, so
if $\ubar \alpha_\rho \leq  \alpha_\rho^\sharp$ there is nothing
to prove. Hence we assume $\ubar \alpha_\rho > \alpha_\rho^\sharp$ from now on. 
First we show that
\begin{equation}
\label{psicross}
\textrm{  for any  } \eps >0 \textrm{ there exists  } \alpha \in
 (\ubar \alpha_\rho - \eps, \ubar \alpha_\rho),
\textrm{ such that  } 
\psi ( g^*(\alpha )) > 0.
\end{equation}
By the definition (\ref{ubardef}) 
of $\ubar \alpha_\rho$, and the assumption
$\ubar \alpha_\rho > \alpha_\rho^\sharp$,
 if (\ref{psicross}) fails then there exists
$\delta >0 $ such that
$\psi \circ g^*$ is identically zero on the interval
$I: = (\ubar \alpha_\rho - \delta, \ubar \alpha_\rho)$,
and by taking $\delta$ small enough we may assume the
interval $I$ contains no discontinuities of $g^*$.
But then the image $J : =  g^*(I)$ is also an open interval
because $g^*$ is continuous and strictly increasing on $I$.
So we would then have $\psi$ identically zero on $J$, which would
 contradict the fact that $\psi$ is analytic and non-constant on $(0,1)$. 
Thus (\ref{psicross}) must hold as asserted.

Observe next that every time the 2-core algorithm deletes a row,
 it has to create at least one column of degree zero, and possibly more. So the aspect ratio (i.e., number of rows divided by number of occupied columns)
  is nondecreasing at each step of the algorithm, provided the 
initial aspect ratio is at least $1$.
Hence the aspect ratio of a non-empty 2-core
  is at least as large as the aspect ratio of the original  incidence matrix to which the algorithm is applied, provided the latter
is at least $1$.

 So if  $m_n/n \to \alpha > 1$, then
  the aspect ratio of the original matrix exceeds $1$ for all $n$ large enough,
   and hence so does the aspect ratio of the $2$-core, assuming it exists.
  Suppose that $\ubar \alpha_\rho > 1$. Then by (\ref{psicross})
and the finiteness of $\DD_\rho$,
   there exists $\alpha' \in (1, \ubar \alpha_\rho) \setminus \DD_\rho$
   such that $\psi ( g^* (\alpha') ) > 0$.
   Then, by  Theorem \ref{c:alphabar}(iii), with $m_n / n \to \alpha = \alpha'$, the $2$-core has aspect ratio less than $1$ for all $n$ large enough,
 which  contradicts the previous conclusion that $\alpha >1$ implied the $2$-core having limiting aspect ratio greater than $1$.
 Hence $\ubar \alpha_\rho \leq 1$.
 \end{proof}

Next we give 
 more information on the key functions $h$ and $\psi$, which should clarify the situation in Theorem \ref{c:alphabar}(iii).
By a {\em root} of $\psi$, we mean
 any number $x $ with $\psi(x)=0$.

 \begin{proposition}
  \label{psiprop}
  Suppose that $\Pr [ W \geq 3]=1$ and $\Exp [ W^2 ] < \infty$. Then
  $0 < \alpha_\rho ^\sharp \leq \ubar \alpha_\rho   \leq 1$. The function
  $\psi$ has
  at least one root in $(0,1)$, and $h$ has at least one local minimum in $(0,1)$.
Suppose that   the following condition  holds:
\begin{itemize}
\item[(a)] $h$  has a single local minimum $x_\rho^\sharp$ in  $(0,1)$, with $h( x_\rho^\sharp) = \inf_{x \in (0,1)} h(x)$.
\end{itemize}
Then $x_\rho^\sharp$ is the location of the unique local maximum of $\psi$ in $(0,1)$,
$\psi ( x_\rho^\sharp ) >0$,
and the interval $(0, 1)$ contains exactly one root of $\psi$, denoted $x^*_\rho$,
which satisfies 
 $x_\rho^\sharp < x^*_\rho$.
Moreover, $\ubar \alpha_\rho   = h(x^*_\rho) > \alpha_\rho^\sharp$,  
and
\begin{equation}
\label{sharp}
\psi ( g^*(\alpha) ) \begin{cases}
> 0 & \textrm{ for all } \alpha \in ( \alpha_\rho^\sharp , \ubar \alpha_\rho) \\
< 0 & \textrm{ for all } \alpha > \ubar \alpha_\rho .
\end{cases}
\end{equation}
Finally, in the fixed row-weight case where where $W = r \geq 3$  a.s.,
condition  (a)   holds, and the unique positive  root of $\psi$
is $x^*_r \in (\frac{r-2}{r-1},1)$.
\end{proposition}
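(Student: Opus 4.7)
The plan is to verify hypothesis~(a) by direct computation with $\rho(s)=s^r$, and then to invoke the already-established part of the proposition to conclude. Set $\rho(s)=s^r$ with $r\geq 3$, so that $\rho'(s)=rs^{r-1}$ and $\rho(x)/\rho'(x)=x/r$. Then from (\ref{hdef}) and (\ref{psidef}) we have
\[
h(x) = -\frac{\log(1-x)}{r x^{r-1}}, \qquad \psi(x) = x + \left(1-\frac{(r-1)x}{r}\right)\log(1-x),
\]
which is consistent with (\ref{MP3}) and (\ref{fixedbar}).

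First I would analyse $h$. A direct computation gives
\[
h'(x) = \frac{1}{r x^{r}}\,u(x), \qquad u(x):=\frac{x}{1-x}+(r-1)\log(1-x),
\]
so the critical points of $h$ in $(0,1)$ coincide with the zeros of $u$ in $(0,1)$. I then study $u$ itself: $u(0)=0$ and
\[
u'(x) = \frac{1-(r-1)(1-x)}{(1-x)^2},
\]
which vanishes uniquely on $(0,1)$ at $x=(r-2)/(r-1)$, is negative to its left and positive to its right. Since $u'(0)=2-r<0$, the function $u$ becomes negative immediately past $0$ and continues to decrease to its minimum $u\bigl(\tfrac{r-2}{r-1}\bigr)=(r-2)-(r-1)\log(r-1)$, which is negative for every $r\geq 3$ (the difference $(r-1)\log(r-1)-(r-2)$ has positive value at $r=3$ and positive derivative $\log(r-1)$ for $r>2$). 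Thereafter $u(x)\to+\infty$ as $x\uparrow 1$ because $x/(1-x)$ dominates $(r-1)\log(1-x)$. By the intermediate value theorem and monotonicity on $(\tfrac{r-2}{r-1},1)$, $u$ has exactly one root in $(0,1)$, call it $x_r^\sharp$, and $x_r^\sharp\in\bigl(\tfrac{r-2}{r-1},1\bigr)$.

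Next I would promote this unique critical point to a unique local minimum of $h$ coinciding with the global infimum. Because $\rho'(x)=rx^{r-1}$ and $r\geq 3$, we have $h(x)\sim 1/(r x^{r-2})\to\infty$ as $x\downarrow 0$, and $h(x)\to\infty$ as $x\uparrow 1$; together with $h$ having a single critical point on $(0,1)$, this forces $x_r^\sharp$ to be the unique local minimum of $h$ and the location of its infimum on $(0,1)$. Thus hypothesis~(a) of Proposition \ref{psiprop} is satisfied.

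Finally I invoke the main (already-proved) assertion of Proposition \ref{psiprop} under hypothesis~(a): $\psi$ has a unique positive root $x^*_r$ in $(0,1)$ and it satisfies $x^*_r>x_r^\sharp$. Combining this with $x_r^\sharp>\tfrac{r-2}{r-1}$ established above yields $x^*_r\in\bigl(\tfrac{r-2}{r-1},1\bigr)$, as required. The only mildly delicate step is the sign of $u$ at its minimum, which is what ensures $x_r^\sharp$ lies strictly to the right of $\tfrac{r-2}{r-1}$ rather than equalling it; everything else is a routine monotonicity argument.
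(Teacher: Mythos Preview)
Your proof is correct and mirrors the paper's own argument closely. The paper works from the $\psi$ side: it computes $\psi''(x)=\frac{1}{r(1-x)}\bigl(r-1-\frac{1}{1-x}\bigr)$, finds its unique zero at $x=\frac{r-2}{r-1}$, applies Rolle to get a unique zero of $\psi'$ in $(\frac{r-2}{r-1},1)$ and hence a unique positive root of $\psi$ in that interval, and then invokes Lemma~\ref{lem:hpsi} to deduce that $h$ has a single local minimum. You work from the $h$ side instead, via your auxiliary function $u$, and then invoke the main part of the proposition to reach $\psi$. But since in this case $\psi'(x)=-u(x)/r$ (so $u'=-r\psi''$), the two computations are literally the same up to sign; only the order in which $h$ and $\psi$ are handled differs.
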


An important observation that helps to explain the  close connection between the functions $h$ and $\psi$ (apparent in Figure \ref{fig1}, for example) and will also form an ingredient in the
proof of Proposition \ref{psiprop} is the following result.

\begin{lemma}
\label{lem:hpsi}
  For all $x \in (0,1)$, $\psi' (x)$ has the same sign as $-h'(x)$, so, in particular, the locations of the local minima of $h$ correspond
exactly to the  locations of the local maxima of $\psi$ in $(0,1)$. Finally, if $\Exp [W^2]<\infty$, then
as $x\downarrow 0$ we have
\begin{align} \psi ( 1- x) & = 1 - h (1-x) - x - \frac{\Exp[ W(W-1)]}{2\Exp[W]} ( 1 +o(1)) x^2 \log x \nonumber\\
& = 1 - h(1-x) - x + o(x) .
\label{hpsi}
\end{align}
\end{lemma}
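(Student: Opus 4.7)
The plan is direct computation: differentiate both $h$ and $\psi$ using the definitions (\ref{psidef}) and (\ref{hdef}), and then simplify to exhibit a clean relationship between $\psi'(x)$ and $h'(x)$. First I compute
\[ h'(x) = \frac{1}{(1-x)\rho'(x)} + \frac{\rho''(x)\log(1-x)}{(\rho'(x))^2} = \frac{1}{\rho'(x)}\left[\frac{1}{1-x} + \frac{\rho''(x)\log(1-x)}{\rho'(x)}\right]. \]
For $\psi$, writing $u(x) := 1 + \rho(x)/\rho'(x) - x$, I get $u'(x) = -\rho(x)\rho''(x)/(\rho'(x))^2$, and then the product rule gives $\psi'(x) = 1 + u'(x)\log(1-x) - u(x)/(1-x)$. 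The constant $1$ cancels against the $-(1-x)/(1-x)$ piece inside $-u(x)/(1-x)$, and the result simplifies to
\[ \psi'(x) = -\frac{\rho(x)}{\rho'(x)}\left[\frac{1}{1-x} + \frac{\rho''(x)\log(1-x)}{\rho'(x)}\right] = -\rho(x)\,h'(x). \]
Since $\Pr[W \geq 1]=1$ gives $\rho(x) > 0$ for all $x \in (0,1)$, this identity shows $\psi'(x)$ and $-h'(x)$ have the same sign on $(0,1)$, whence the local minima of $h$ in $(0,1)$ coincide with the local maxima of $\psi$ there.

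For the asymptotic as $x \downarrow 0$, I substitute $1-x$ for $x$ into (\ref{psidef}) and (\ref{hdef}) to obtain
\[ \psi(1-x) = 1 - x + \left(x + \frac{\rho(1-x)}{\rho'(1-x)}\right)\log x, \qquad h(1-x) = -\frac{\log x}{\rho'(1-x)}, \]
and take the difference:
\[ \psi(1-x) - \bigl(1 - h(1-x) - x\bigr) = x\log x + \frac{\rho(1-x)-1}{\rho'(1-x)}\log x. \]
The assumption $\Exp[W^2]<\infty$ validates the Taylor expansions $\rho(1-x) = 1 - \Exp[W]x + \tfrac{1}{2}\Exp[W(W-1)]x^2 + o(x^2)$ and $\rho'(1-x) = \Exp[W] - \Exp[W(W-1)]x + o(x)$ as $x \downarrow 0$. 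A short calculation with these, using a geometric expansion for $1/\rho'(1-x)$, yields
\[ \frac{\rho(1-x)-1}{\rho'(1-x)} = -x - \frac{\Exp[W(W-1)]}{2\Exp[W]}x^2 + o(x^2). \]
Multiplying by $\log x$, the leading $-x\log x$ cancels the $x \log x$ term, leaving
\[ \psi(1-x) - \bigl(1 - h(1-x) - x\bigr) = -\frac{\Exp[W(W-1)]}{2\Exp[W]}(1+o(1))\,x^2\log x, \]
which is the claimed first equality. The second follows since $x^2\log x = o(x)$ as $x \downarrow 0$.

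The only step that requires care is organizing the algebra for $\psi'(x)$: several terms must cancel to produce the clean factorization $\psi' = -\rho\, h'$, and I expect this cancellation (particularly the disappearance of the leading $1$) to be the main bookkeeping obstacle. Everything else is routine Taylor expansion, made legitimate by the hypothesis $\Exp[W^2]<\infty$.
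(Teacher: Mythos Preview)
Your proof is correct and follows essentially the same approach as the paper: the paper also computes $\psi'(x)$ and $h'(x)$ directly from the definitions to obtain the identity $\psi'(x) = -\rho(x)\,h'(x)$, and then dismisses the asymptotic (\ref{hpsi}) as ``a routine calculation'' without giving details. Your write-up supplies those details carefully and is entirely consistent with the paper's argument.
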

\begin{proof}
Differentiating (\ref{psidef}), we obtain
\begin{align}
\label{psidiff1}
\psi'(x)     =  - \frac{\rho (x)}{\rho'(x)} \left( \frac{1}{1-x} + \frac{\rho''(x)  }{\rho'(x)} \log (1-x) \right) .
\end{align}
On the other hand, from (\ref{hdef}), we have that, for $x \in (0,1)$,
\[ h' (x) = \frac{1}{\rho'(x)} \left( \frac{1}{1-x} + \frac{\rho''(x)}{\rho'(x)} \log (1-x) \right) = - \frac{1}{\rho(x)} \psi'(x) ,\]
by comparison with  (\ref{psidiff1}).
Finally,  (\ref{hpsi}) follows from a routine calculation.
\end{proof}

Before completing the proof of Proposition \ref{psiprop}, we make some further remarks and present some examples.
The main complication in the interpretation of Theorem \ref{c:alphabar}(iii) is due to
 the fact that $g^*$ has discontinuities,
so $\{ \psi (g^*(\alpha)) : \alpha \geq 0 \}$ is only a subset
of $\{ \psi (x) : x \in [0,1) \}$. Let
\[ \GG_\rho := \{ g^* (\alpha) : \alpha \geq \alpha_\rho^\sharp \} .\]
By Lemma \ref{gstarlem}(ii), $\GG_\rho$ is  a union of finitely many intervals $\GG_\rho = [g^-_1, g^+_1) \cup \cdots \cup [g^-_\ell, g^+_\ell)$
where $g^-_1 < g^+_1 < g^-_2 < \cdots < g^+_\ell$, and,
 for each $k$, $g_k^- = g^*(\alpha)$ for  $\alpha \in \DD_\rho$, and $h( g_k^-)$ is a local minimum.
Recall that $\alpha = h ( g^*(\alpha))$ and $\alpha \mapsto g^*(\alpha)$ is increasing for $\alpha > \alpha_\rho^\sharp$ (see Lemma \ref{gstarlem}), 
so $x \mapsto h (x)$
must be increasing on $\GG_\rho$. So in fact $\alpha_\rho^\sharp = h ( g_1^- ) < \cdots < h( g_\ell^-)$. The `curve'
$\psi ( g^*(\alpha) )$, $\alpha \geq \alpha_\rho^\sharp$ is then a (discontinuous) trace of $\psi (x)$, where $x$ runs over $\GG_\rho$,
piecewise continuously on intervals starting at $g_k^-$ which, by Lemma \ref{lem:hpsi}, correspond to local {\em maxima} of $\psi$.
Figures \ref{fig1} and \ref{fig2} give some illustrations of possible behaviour. Observe that $\psi (x)$ is not necessarily decreasing for all $x \in \GG_\rho$.

Note that condition  (a) in Proposition \ref{psiprop}
 is not necessary for the sharp transition property (\ref{sharp})
to hold. Two other relevant conditions are:
\begin{itemize}
\item[(b)] $\psi$ has a single   root in $(0,1)$;
\item[(c)] the global minimum of $h$ on $(0,1)$ is the rightmost local minimum.
\end{itemize}
If $\Pr [ W \geq 3] =1$ and $\Exp[W]<\infty$, then $h(x) \to \infty$ as $x \to 0$ and as $x \to 1$, so
  (a) $\Rightarrow$ (c), while in the course of the proof of Proposition \ref{psiprop} below, we show
   that (a) $\Rightarrow$ (b) as well. We mention  3 illustrative  examples.
   \begin{itemize}
   \item
An example   for which   conditions (a) and (b) do not hold but (c) does
is provided by $\rho (s) = 0.9s^3 + 0.1s^{38}$, for which $\psi$ has 3 positive roots
(see Figure \ref{fig8}).
\item
 An example for which condition (b) holds but conditions  (a) and (c)  do not
is $\rho(s) = 0.9s^3 + 0.1s^{24}$, for which $g^*$ has two discontinuities (see Figure \ref{fig1}).
\item
An example in which none of (a), (b) or (c) holds and where (\ref{sharp}) fails is provided by
 $\rho(s) = 0.9183 s^3+ 0.04 s^{19} + 0.0417 s^{41}$ (see Figure \ref{fig2}).
\end{itemize}

\begin{figure}[!h]
\center
\includegraphics[width=7cm]{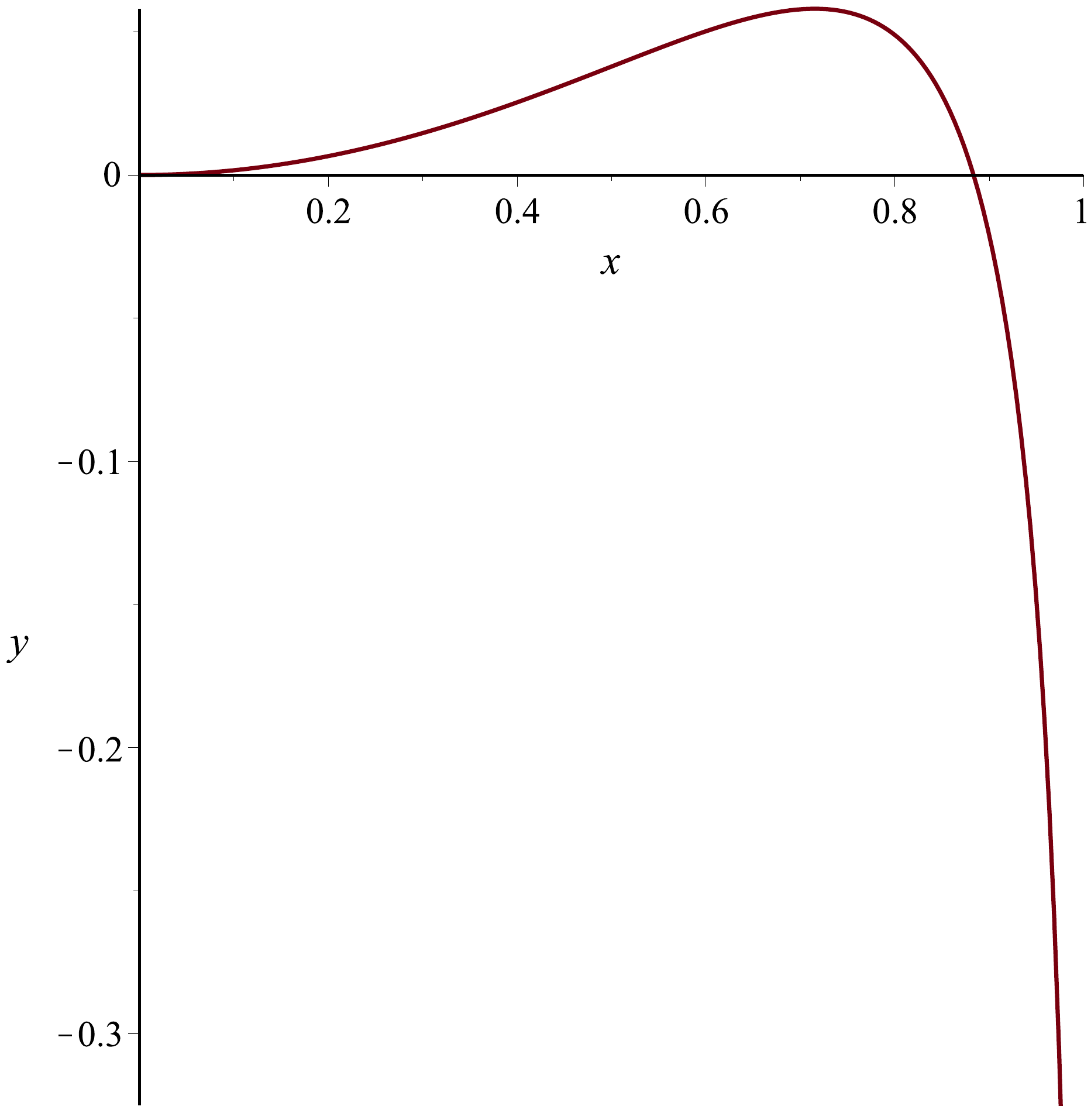} \quad
\includegraphics[width=7cm]{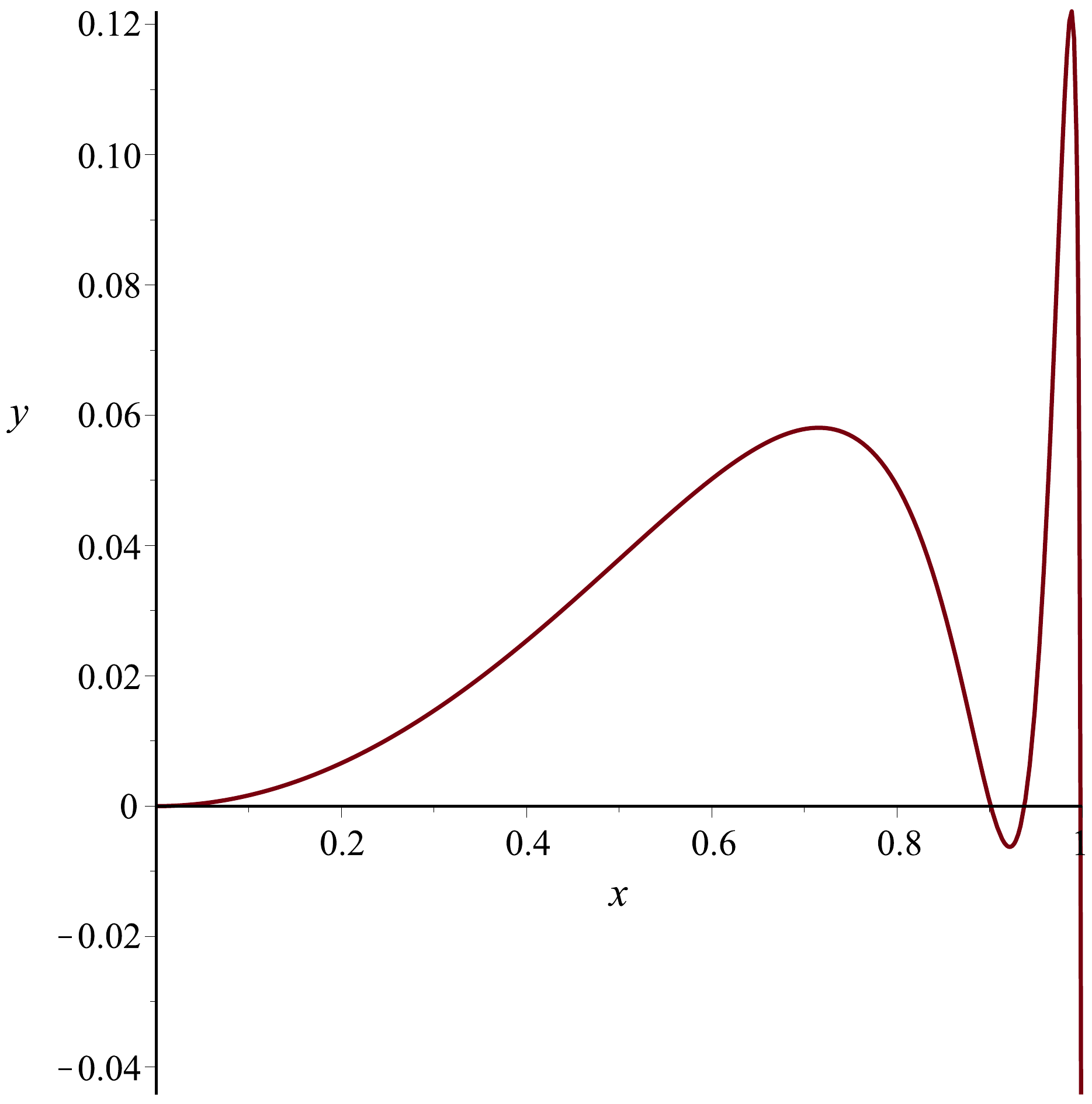}
\caption{Plots of $y= \psi(x)$  for $\rho(s) = s^3$ ({\em left})
and $\rho(s) = 0.9 s^3 +0.1 s^{38}$ ({\em right}). In the first case the only positive root is $x_1 \approx 0.883414$, while in
the second case the $3$ positive roots are $x_1 \approx 0.901174$, $x_2 \approx 0.937414$, and $x_3 \approx 0.997979$.
For the case on the right $\alpha_\rho^\sharp \approx 0.872923$ and $g^* ( \alpha_\rho^\sharp ) \approx 0.988192$,
and only the root $x_3$ exceeds  this value.
Proposition \ref{psiprop} gives $\ubar \alpha_\rho   \approx 0.917935$ for the case on the left
and   $\ubar \alpha_\rho   \approx 0.998263$ for the case on the right.}
\label{fig8}
\end{figure}

\begin{proof}[Proof of Proposition \ref{psiprop}.]
First we show that if $\Pr [ W \geq 3 ] =1$ and $\Exp[W]<\infty$, then $\psi$ has at least one root in $(0,1)$.
So suppose that there exists an integer $r \geq 3$ for which $\Pr [ W \geq r] = 1$
and $\Pr [ W = r] = p >0$. Then $\rho (s) \sim p s^r$ as $s \downarrow 0$. From (\ref{psidiff1}) we have
\begin{align}
\label{psidiff2}
\psi'' (x) & =  (1-x)^{-1} \left( \frac{2 \rho(x) \rho'' (x)}{\rho' (x)^2} - 1 \right) - (1-x)^{-2} \frac{\rho(x)}{\rho'(x)}   \nonumber\\
&~~{} - \left( \frac{\rho''(x)}{\rho'(x)} + \frac{ \rho(x ) \rho'''(x)}{\rho'(x)^2} - \frac{2 \rho(x) \rho''(x)^2}{\rho'(x)^3} \right) \log (1-x) .
\end{align}
Taking $x \downarrow 0$ in (\ref{psidiff1}) and (\ref{psidiff2}),
using   $\rho^{(k)} (x) \sim \frac{r!}{(r-k)!} p x^{r-k}$ for $k \leq 3$,
we obtain
\[ \psi' (0) =0 ; ~~~ \psi''( 0) = \frac{r -2}{r} > 0 ,\]
since $r \geq 3$.
Hence $\psi(0) =0$ is a local minimum, and $\psi (x) >0$ for $x>0$ small enough. But  $\psi(x) \to -\infty$ as $x \uparrow 1$,
so continuity implies that $\psi$ has at least one root in $(0,1)$.

Consider the condition  (a)   in the proposition.
Suppose that $h$ has a unique local minimum located at $x_\rho^\sharp \in (0,1)$, so $\ubar \alpha_\rho = h (x^\sharp_\rho)$. 
Then by Lemma
\ref{lem:hpsi}, $\psi$ has a unique local maximum at $x_\rho^\sharp$, and necessarily
$\psi ( x_\rho^\sharp) >0$. By continuity (and Rolle's theorem) it follows that $\psi$ has 
exactly one root  $x^*_\rho \in (x_\rho^\sharp, 1)$.
So (a) $\Rightarrow$ (b).
Moreover, it follows that  $\psi (x) >0$ for $x \in (0,x^*_\rho)$ and $\psi (x) <0$ for $x > x^*_\rho$.
Hence in this case $\GG_\rho =  [ x_\rho^\sharp, 1)$, and the claim (\ref{sharp}) follows.

Finally, we show that if $\rho(s) = s^r$ for some $r \geq 3$,
then (a) holds.
In the case $\rho(s) = s^r$ we obtain (cf (\ref{psidiff2}))
\[ \psi''(x) = \frac{1}{r(1-x)} \left( r -1 - \frac{1}{1-x} \right) .\]
Hence $\psi''(0) = \frac{r-2}{r} > 0$ and
 for $x \in (0,1)$
we have
 $\psi''(x) =0$
if and only if $x = \frac{r-2}{r-1}$ (an inflexion point).
So, by Rolle's theorem, $\psi'(x) =0$ for at most one $x \in (0,1)$, necessarily $x \in (\frac{r-2}{r-1},1)$,
and this must be a local maximum for $\psi$ since $\psi(x) \to -\infty$ as $x \uparrow 1$. Another application of
Rolle's theorem shows that $\psi$ has a single positive root, $x^*_r$ say,
which must be in $(\frac{r-2}{r-1},1)$.
By Lemma \ref{lem:hpsi}, the fact that $\psi$ has a single local maximum
 also shows that $h$ has a single local minimum in $(0,1)$, which is lies in
  $ (\frac{r-2}{r-1},1)$.
\end{proof}

Now we can complete the proof of   Theorem \ref{thm3}.

 \begin{proof}[Proof of Theorem \ref{thm3}.]
The expected number $\Expn  [ \NN (n,m) ]$
 of null vectors is at least one, and may be large even when
$\Prn [T_n \leq \alpha
 n ]$
 is small. Nevertheless we can derive bounds on $T_n/ n$
 by studying the asymptotics of
$\Expn [ \NN (n,m) ]$
because
\begin{equation}
\label{lowbound}
\Prn [ T_n \leq m ]
=\Prn [ \NN ( n,m)
\geq
2 ]
\leq \Expn [ \NN (n,m) ] -1,
\end{equation}
by Markov's inequality applied to the
nonnegative random variable $\NN(n,m) -1$.

Suppose that $m_n / n \to \alpha \in (0, \alpha^*_\rho)$. Then by  (\ref{lowbound}) with (\ref{polybound}),
$\Prn [ T_n \leq m_n ]
= O (n^{-1} )$.
It follows that, for any $\eps >0$,
$\Prn [ T_n \leq (\alpha^*_\rho -\eps) n ] \to 0$. On the other hand,
Theorem \ref{c:alphabar}(iii)
implies that there exists $\delta>0$ such that
 for any $\alpha \in ( \ubar \alpha_\rho, \ubar \alpha_\rho +\delta)$,
 $\Prn [ T_n \leq \alpha ] \to 1$.
Moreover, these results together show that
$\alpha^*_\rho \leq \ubar \alpha_\rho$, and
 $\ubar \alpha_\rho \leq 1$ by 
Proposition \ref{lessthan1}.
 \end{proof}

To conclude this section, we give the proof of Proposition \ref{threshasym}.

\begin{proof}[Proof of Proposition \ref{threshasym}.]
Take $\rho(s) = s^r$ for $r \geq 3$.
As already mentioned, the asymptotic for $\alpha^*_r$ is in \cite{calkin2}.
 Fix $\alpha >0$.
Then,
\[ h ( 1 - \re^{-\alpha r/2} ) = - \frac{\log ( \re^{-\alpha r/2} )}{r (
1 - \re^{ -\alpha r/2} )^{r-1}} = \frac{\alpha}{2} ( 1 + o(1) ) ,\]
as $r \to \infty$.
Hence $h (1 -\re^{- \alpha r/2} ) \leq \alpha$ for all $r$ sufficiently
large,
which by (\ref{alphasharp}) shows that $\limsup_{r \to \infty}
\alpha_r^\sharp \leq \alpha$. Since $\alpha >0$
was arbitrary, it follows that $\lim_{r \to \infty} \alpha_r^\sharp = 0$.
 
Finally, by Proposition \ref{psiprop}, (\ref{fixedbar}) holds.
Then with  (\ref{grsim}) and repeated Taylor expansions we obtain
\begin{align*} \ubar \alpha_r & = - \frac{ \log ( \re^{-r} + r^2
\re^{-2r} +O (r^4 \re^{-3r} ) ) }  {r (1 - \re^{-r} - r^2 \re^{-2r} + O
(r^4 \re^{-3r} ) )^{r-1} } \\
& =  \frac{ 1 - r^{-1} \log ( 1 + r^2 \re^{-r} +O (r^4 \re^{-2r} ) ) } 
{  (1 - \re^{-r} - r^2 \re^{-2r} + O (r^4 \re^{-3r} ) )^{r-1} } \\
& = \left( 1 - r \re^{-r} +O (r^3 \re^{-2r} ) \right) \left( 1 + (r-1)
\re^{-r} +
O (r^3 \re^{-2r} ) \right) \\
& = 1 - \re^{-r} + O (r^3 \re^{-2r} ) ,
\end{align*}
 completing the proof of (\ref{starsim}).
 \end{proof}

\section{Technical appendix}
\label{sec:appendix}

\subsection{Parity of random variables}

The following simple result, on
the probability that certain integer-valued
random variables take even values, will be used several times.
The formula in Lemma \ref{lem4}(i)
 for the probability that a binomial
variable is even may be found for example in \cite[pp.\ 277--278]{feller1}.

\begin{lemma}
\label{lem4}
 Let $X$ be a  $\ZP$-valued random variable
  with
probability
 generating function $\phi(s):=\Exp[s^X]$.
Then $\Pr [ X \in 2\Z ] = \frac{1}{2} (1 + \phi(-1))$.
In particular (i) if $X \sim \Bin (n,p)$,
then $\Pr[ X \in 2\Z ] = (1 + (1-2p)^n)/2$; and (ii)
if $X \sim \Po (\mu)$,
then $\Pr [ X \in 2 \Z ] = \re^{-\mu} \cosh \mu$.
\end{lemma}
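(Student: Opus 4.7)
The plan is to use the elementary identity $\frac{1+(-1)^k}{2} = \mathbf{1}\{k \in 2\Z\}$, valid for every $k \in \ZP$. Applying this pointwise and taking expectations gives
\[
\Pr[X \in 2\Z] = \Exp\!\left[\frac{1 + (-1)^X}{2}\right] = \frac{1}{2}\bigl(1 + \Exp[(-1)^X]\bigr) = \frac{1}{2}\bigl(1 + \phi(-1)\bigr),
\]
since $\Exp[(-1)^X] = \Exp[s^X]\big|_{s=-1} = \phi(-1)$ (the sum defining $\phi$ converges absolutely at $s=-1$ because $X \in \ZP$ so $|s^X| \le 1$). This proves the general assertion.

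For part (i), I would simply recall that the probability generating function of a $\Bin(n,p)$ variable is $\phi(s) = (1-p+ps)^n$, so $\phi(-1) = (1-2p)^n$, and substitution into the general formula yields $\Pr[X \in 2\Z] = (1+(1-2p)^n)/2$. For part (ii), the generating function of a $\Po(\mu)$ variable is $\phi(s) = \re^{\mu(s-1)}$, so $\phi(-1) = \re^{-2\mu}$, and then
\[
\Pr[X \in 2\Z] = \tfrac{1}{2}(1 + \re^{-2\mu}) = \re^{-\mu} \cdot \tfrac{1}{2}(\re^{\mu} + \re^{-\mu}) = \re^{-\mu}\cosh\mu,
\]
as claimed.

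There is no real obstacle here: the whole argument rests on the indicator identity and evaluation of standard generating functions at $s=-1$. The only mild point worth noting is that one needs $X$ to be $\ZP$-valued (not merely $\Z$-valued) for $(-1)^X$ to be unambiguously defined and for $\phi(-1)$ to make sense via the power series, but this is built into the hypothesis.
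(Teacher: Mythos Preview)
Your proof is correct and follows essentially the same approach as the paper: both use the indicator identity $\mathbf{1}\{X \in 2\Z\} = \tfrac{1}{2}(1+(-1)^X)$, take expectations, and then evaluate the standard generating functions $(1-p+ps)^n$ and $\re^{\mu(s-1)}$ at $s=-1$. Your added remarks on absolute convergence and the need for $X$ to be $\ZP$-valued are fine but not essential additions.
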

\begin{proof}
For $X \in \ZP$,
$\1\{X \in 2\Z \} = \frac{1}{2} (1 + (-1)^X )$, yielding the first statement in the lemma.
For the $\Bin (n,p)$ case, $\phi (s ) = (ps + 1 -p)^n$, giving part (i),
while in the $\Po (\mu)$ case, $\phi (s) = \re^{\mu (s -1)}$, which gives
$\Pr [ X \in 2 \Z ] = (1+ \re^{-2\mu})/2 = \re^{-\mu} \cosh \mu$.
\end{proof}

\subsection{Parity of multinomial random variables}
\label{sec:multipar}

We saw in Lemma \ref{lem4} that the probability that a $\Bin (n,p)$
random variable is even is
$(1+(1-2p)^n)/2$. In this section
 we extend this formula to a more complicated multinomial setting
and more general congruence conditions modulo $r$.

Here is our probabilistic model.
We perform a sequence of $n$ independent trials.
Each trial is probabilistically identical,
and we are interested in the outcome of a trial
 described in terms
of
an arbitrary collection of $k$ events
$A_1 , \ldots, A_k$. Probabilities  $p(E_I)$ are
specified for each of the `elementary'  events $E_I$
defined as
\[
E_I :=  \left( \cap_{i \in I} A_i \right)
 \cap \left( \cap_{j \in [k] \setminus I} A_j\right).
\]
for each $I \subseteq [k]$ (here $[k]:= \{1,\ldots, k\}$).
We assume that
$p(E_I) >0$ for each $I$ and that $\sum_{I \subseteq [k]} p(E_I) =1$.
Use $\Pr_n$ to denote the probability measure associated
with the model consisting of $n$ trials
as just described.

Let  $N_i$ be the number of occurrences of event $A_i$.
For $L \subseteq J \subseteq [k]$ set
$$
E_{J,L} := \left( \cap_{i \in L} A_i \right) \cap
 \left( \cap_{i \in J \setminus L} A_i^c \right)
$$
and set $E_J:= E_{[k],J}$. Also, set
$
p_o(J) $ to be the probability for a single trial that an odd number
of outcomes $A_i, i \in J$ occur, and note that
\begin{align*}
1-2 p_o (J) = \sum_{L \subseteq J}(-1)^{|L|} p(E_{J,L}),
\end{align*}
where $|L|$ denotes the number of elements of $L$.
The next result gives a general
formula  for the probability in $n$ trials that
for each $i$ the $N_i$ falls into a particular congruence
class modulo $r$, and a specialization to
the event that $N_i$ is even for each $i$ in a specified subset $I$ of
$[k]$ and $N_i$ is odd for each $i \in [k] \setminus I$.
For positive integer $r$, define the complex number
$\omega := \re^{(2 \pi /r) i}$ (a  complex $r$th root of unity).

\begin{lemma}
\label{parity}
(i) Let $r \geq 2$ be an integer and
 $\bt = (t_1,\ldots, t_k)  \in \{0,1,\ldots,r-1\}^k$.
Then
\begin{align}
\Pr_n \left[ \cap_{i=1}^k \{ N_i \equiv t_i ~ (\mod r) \} \right]
= r^{-k}
\sum_{\bh \in \{0,1,\ldots,r-1\}^k}
 \omega^{- \bt \cdot  \bh }
\left(   \sum_{\bg \in \{0,1\}^k} \omega^{\bg \cdot \bh } p_{\bg} \right)^n,
\label{maineq}
\end{align}
where
 for $\bg = (g_1,\ldots,g_k) \in \{0,1\}^k$ we set $p_{\bg} = p(E_{\bg})$ with
the event
$$
E_\bg := \left( \cap_{i \in \{1,\ldots,k\}: g_i =1} A_i \right) \cap
\left( \cap_{i \in \{1,\ldots,k\}: g_i =0} A_i^c \right)
$$
defined in terms of a single trial, and
$\bt \cdot \bh := \sum_{i=1}^k t_i h_i $ and
$\bg \cdot \bh := \sum_{i=1}^k g_i h_i $.

(ii) For any $I \subseteq [k]$,
\begin{align}
\Pr \left[\left( \cap_{i \in I} \{ N_i \in 2 \Z \} \right)
\cap
\left( \cap_{j \in [k] \setminus I} \{ N_j \notin 2\Z \}
\right)
\right]
= 2^{-k} \sum_{J \subseteq [k]} (-1)^{|J \cap I|} (1 - 2 p_o(J))^n
.
\end{align}
\end{lemma}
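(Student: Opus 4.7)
The plan is to derive part (i) via a roots-of-unity (Fourier) filter on the group $(\Z/r\Z)^k$, then to obtain part (ii) by specializing to $r = 2$.

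For part (i), I would start from the elementary identity
\[
\frac{1}{r}\sum_{h=0}^{r-1}\omega^{h(N-t)} = \1\{N \equiv t ~(\mod r)\},
\]
valid for any integer $N$ and $t \in \{0,\ldots,r-1\}$, and apply it separately in each coordinate $i \in [k]$. Multiplying the $k$ resulting indicators and taking expectations converts the left-hand side of (\ref{maineq}) into
\[
r^{-k}\sum_{\bh \in \{0,\ldots,r-1\}^k}\omega^{-\bt\cdot\bh}\,\Exp\!\left[\omega^{\sum_i h_i N_i}\right].
\]
The remaining expectation can be evaluated using independence of the $n$ trials together with the fact that the elementary events $E_{\bg}$, $\bg \in \{0,1\}^k$, partition the sample space of a single trial. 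On $E_{\bg}$ one has $\sum_i h_i \1_{A_i} = \bg\cdot\bh$, so the single-trial expectation of $\omega^{\sum_i h_i \1_{A_i}}$ equals $\sum_\bg p_\bg\,\omega^{\bg\cdot\bh}$; raising this to the $n$th power by independence and substituting back into the previous display yields part (i).

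For part (ii), I would specialize part (i) to $r = 2$, $\omega = -1$, choosing $t_i = 0$ for $i \in I$ and $t_i = 1$ for $i \notin I$, so that the conjunction in the probability on the left-hand side is exactly $\cap_i\{N_i \equiv t_i ~(\mod 2)\}$. Under the correspondence $J := \{i : h_i = 1\}$, the outer sum is re-indexed by subsets $J \subseteq [k]$, with the sign coming from $(-1)^{\bt\cdot\bh}$. For the inner character sum, I would re-index $\bg \leftrightarrow L = \{i : g_i = 1\}$ and then group terms by $L \cap J$: the decomposition
\[
E_{J,L'} = \bigsqcup_{L \subseteq [k] \,:\, L \cap J = L'} E_{[k],L}
\]
gives $p(E_{J,L'}) = \sum_{L : L \cap J = L'} p(E_{[k],L})$, and so the inner sum collapses to $\sum_{L' \subseteq J}(-1)^{|L'|} p(E_{J,L'})$, which equals $1 - 2p_o(J)$ as recorded in the preamble to the lemma.

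The only step requiring real care is the sign bookkeeping in part (ii): one must track the exponent $\bt\cdot\bh$ carefully under the identification $J \leftrightarrow \bh$, and cross-check the resulting formula against a small case such as $k = 1$ (where Lemma \ref{lem4}(i) provides the answer $\tfrac{1}{2}(1 + (1-2p_1)^n)$) to confirm the sign convention. The remainder is routine: Fourier inversion on a finite abelian group, independence across trials, and the combinatorial identity above for the elementary events.
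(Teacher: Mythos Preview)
Your approach is correct but differs from the paper's. For part (i), you use the roots-of-unity filter (Fourier inversion on $(\Z/r\Z)^k$) directly, whereas the paper argues by induction on $n$: the base case $n=0$ is checked via orthogonality of characters, and the inductive step conditions on the outcome of the first trial. Both routes are standard; yours is more conceptual and dispenses with induction, while the paper's is marginally more elementary in that the character identity is never invoked as such. For part (ii), both you and the paper specialize (i) to $r=2$; your roadmap for collapsing the inner sum via the decomposition $E_{J,L'} = \bigsqcup_{L:\,L\cap J = L'} E_{[k],L}$ is considerably more explicit than the paper's one-line remark.

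Your caution about sign bookkeeping is well taken, and your proposed $k=1$ check is exactly the right diagnostic. Carrying it out: with $t_i = 0$ on $I$ and $t_i = 1$ off $I$, one obtains $(-1)^{\bt\cdot\bh} = (-1)^{|J\setminus I|}$, not $(-1)^{|J\cap I|}$ as printed; and indeed with $I=\{1\}$ the printed right-hand side evaluates to $\tfrac12(1-(1-2p_1)^n)$, the probability that $N_1$ is \emph{odd}. (The paper's proof hint, ``setting $t_i$ to be $1$ on $I$'', is consistent with the printed right-hand side but then the left-hand side should have the roles of $I$ and $[k]\setminus I$ swapped.) Your method would correctly deliver whichever version is intended; just be aware that the identity as printed does not survive the sanity check you propose.
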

\begin{proof}
 Part (ii) follows from the $r=2$ case of part (i)
on setting $t_i$ to be $1$ on $I$ and $0$ otherwise. Thus we
need to prove part (i).
We obtain (\ref{maineq}) by induction on $n$. First consider the case $n=0$.
In this case
the left side of (\ref{maineq}) is equal to 1 if $t_i =0$ for all $i$ and is equal to zero
otherwise, and the right side is equal to
\[
r^{-k} \sum_{\bh \in \{0,1, \ldots , r-1\}^k } \omega^{-\bt \cdot \bh} .\]
If $\bt = \0$ then  this expression is 1.
Otherwise, it is zero since if  $t_i\neq 0$ for some $i$ then
\[
\sum_{h_i=0}^{r-1}
\omega^{-t_i h_i} = \frac{ 1 - \omega^{- r t_i } }{ 1 - \omega^{-t_i} } = 0.
\]
Thus the inductive hypothesis holds for $n=0$. Suppose it holds for some $n$.
In the case of $n+1$ trials, conditioning on the outcome of the first
trial we obtain
\begin{align*}
\Pr_{n+1}  \left[ \cap_{i=1}^k \{ N_i \equiv t_i ~ (\mod r) \} \right]
= \sum_{\bg \in \{0,1\}^k} p_{\bg}
\Pr_{n} \left[ \cap_{i=1}^k \{ N_i \equiv t_i ~ (\mod r) \} | E_\bg \right]
\\
= \sum_{\bg \in \{0,1\}^k} p_{\bg}
 \Pr_{n+1} \left[ \cap_{i=1}^k \{ N_i \equiv t_i - g_i ~ (\mod r) \} \right]
\\
= r^{-k}
 \sum_{\bg \in \{0,1\}^k} p_{\bg}
\sum_{\bh \in \{0,1,\ldots,r-1\}^k}
 \omega^{- (\bt - \bg) \cdot  \bh }
\left(   \sum_{\bbf \in \{0,1\}^k}
\omega^{\bbf \cdot \bh }
 p_{\bbf}
 \right)^n
\\
= r^{-k}
\sum_{\bh \in \{0,1,\ldots,r-1\}^k}
 \omega^{- \bt  \cdot  \bh }
 \left( \sum_{\bg \in \{0,1\}^k} \omega^{\bg \cdot \bh} p_{\bg} \right)
 \left( \sum_{\bbf \in \{0,1\}^k} \omega^{\bbf \cdot \bh} p_{\bbf}  \right)^n
\\
= r^{-k}
\sum_{\bh \in \{0,1,\ldots,r-1\}^k}
 \omega^{- \bt  \cdot  \bh }
 \left( \sum_{\bg \in \{0,1\}^k} \omega^{\bg \cdot \bh} p_{\bg}  \right)^{n+1},
\end{align*}
which completes the induction. \end{proof}

\subsection{Generating function properties}

The next result collects some elementary properties of probability generation functions.

\begin{lemma}
\label{pgf}
Let $\phi(s) := \Exp [ s^X]$, $s \in [-1,1]$, for a $\ZP$-valued random variable $X$. Then $\phi(0) = \Pr[ X =0]$, $\phi(1) = 1$,
and $\phi(s)$ is infinitely differentiable at least for $s \in (-1,1)$; if $\Exp [X] < \infty$ then
$\phi'(s) = \frac{\ud}{\ud s} \phi (s)$ is   continuous in the closed interval $[-1,1]$.
Moreover,
\begin{itemize}
\item[(i)] Suppose that $\Pr [ X=0]=0$. Then  as $s \downarrow 0$,
\[ \phi (s) = s \Pr[ X=1] + O(s^2), ~\textrm{and}~ \phi'(s) = \Pr[ X=1] + O(s) .\]
\item[(ii)] If $\Exp [ X] < \infty$, then as $s \downarrow 0$,
\[ \phi (1-s) = 1 - s \Exp[ X] + o(s), ~\textrm{and}~ \phi'(1-s) = \Exp[ X] + o(1) .\]
\item[(iii)] For any $s \in [0,1]$, $| \phi (-s) | \leq \phi (s)$.
\end{itemize}
\end{lemma}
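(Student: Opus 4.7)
The plan is to work throughout with the power series representation $\phi(s) = \sum_{k \geq 0} p_k s^k$, where $p_k := \Pr[X=k]$. Since $\sum p_k = 1$, the radius of convergence is at least $1$, so standard theory of real power series immediately gives $\phi(0) = p_0 = \Pr[X=0]$, $\phi(1) = 1$, and infinite differentiability on $(-1,1)$ with derivatives obtained by term-by-term differentiation. Part (iii) is then immediate from the triangle inequality: for $s \in [0,1]$, $|\phi(-s)| = |\sum_{k \geq 0} p_k (-s)^k| \leq \sum_{k \geq 0} p_k s^k = \phi(s)$.

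For part (i), assume $p_0 = 0$. I would split $\phi(s) = p_1 s + s^2 R(s)$ with $R(s) = \sum_{k \geq 2} p_k s^{k-2}$, and observe that $|R(s)| \leq \sum_{k \geq 2} p_k \leq 1$ on $[0,1]$, giving $\phi(s) = p_1 s + O(s^2)$. The derivative claim is analogous: $\phi'(s) - p_1 = s \sum_{k \geq 2} k p_k s^{k-2}$, and an elementary uniform bound (for example $k 2^{2-k} \leq 2$ for $k \geq 2$, so that $\sum_{k \geq 2} k p_k s^{k-2} \leq 2$ on $(0,1/2]$) shows the factor multiplying $s$ is bounded uniformly in the law of $X$, giving $\phi'(s) = p_1 + O(s)$.

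Part (ii), and the associated boundary continuity of $\phi'$, is where the real work lies. For the value, write $(1 - \phi(1-s))/s = \sum_{k \geq 1} p_k (1 - (1-s)^k)/s$; a short calculus exercise shows that $s \mapsto (1 - (1-s)^k)/s$ is monotone decreasing on $(0,1]$ with limit $k$ as $s \downarrow 0$ (equivalently, the summands increase to $k p_k$). Monotone convergence, valid since $\sum k p_k = \Exp[X] < \infty$, then yields $(1 - \phi(1-s))/s \to \Exp[X]$. For the derivative, the same monotone-convergence argument applied to $\phi'(1-s) = \sum_{k \geq 1} k p_k (1-s)^{k-1}$ (whose summands increase to $k p_k$ as $s \downarrow 0$) gives $\phi'(1-s) \to \Exp[X]$, which is the $o(1)$ claim and also delivers continuity of $\phi'$ at $s = 1$. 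Continuity of $\phi'$ at $s = -1$ follows from the Weierstrass $M$-test with $M_k = k p_k$, which is summable under $\Exp[X]<\infty$, showing that $\sum_{k \geq 1} k p_k s^{k-1}$ converges uniformly on $[-1,1]$.

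The only modest obstacle is the passage to the boundary $s \uparrow 1$ in the two series appearing in (ii); this is an Abel-type statement, but the nonnegativity of the coefficients lets it be handled entirely by monotone convergence, using the finite-mean hypothesis as the only quantitative input. Every other assertion in the lemma reduces to elementary tail bounds for nonnegative power series on $[0,1]$.
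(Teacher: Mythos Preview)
Your proof is correct. The paper's own proof simply declares parts (i), (ii), and the preliminary claims well known and cites Feller, giving an argument only for (iii) via the triangle inequality $|\phi(-s)| \leq \Exp[|(-s)^X|] = \Exp[s^X] = \phi(s)$, which is the same as your series version; your treatment of (i) and (ii) via explicit tail bounds and monotone convergence is a standard and self-contained expansion of what the paper leaves to the reference.
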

\begin{proof}
Apart perhaps from part (iii),
all of the properties stated in the lemma are well known: see for example \cite[pp.\ 264--266]{feller1}.
For part (iii), let $s \in [0,1]$. Then
\begin{align*}
| \phi (-s)|  \leq  \Exp [ \left|(-s)^X\right|  ]
= \Exp [ s^X ]  = \phi(s).
\end{align*}
\end{proof}

\subsection{Asymptotic estimates}

We shall use the following
bounds on the binomial coefficient $\binom{n}{k}$.
\begin{lemma}
Let $n \in \N$ and $k \in \{0,1,\ldots, n\}$. Then
\begin{align}
\label{chooseub2}
\binom{n}{k} & \leq
 \left( \left(\frac{k}{n}\right)^{k/n}
\left( 1 - \frac{k}{n} \right)^{1-(k/n)}
\right)^{-n} \leq n^k \re^k k^{-k}. \end{align}
On the other hand, if   $0 < k < n$,
\begin{align}
\label{chooselb}
\binom{n}{k} &  \geq
\left( \frac{n}{2 \pi k (n-k) } \right)^{1/2}
\re^{-1/6} \left( \left(\frac{k}{n}\right)^{k/n}
\left( \frac{n-k}{n} \right)^{(n-k)/n}
\right)^{-n}.
\end{align}
\end{lemma}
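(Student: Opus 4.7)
The lemma collects three standard and essentially independent inequalities, so I would prove them in turn. For the first inequality in \eqref{chooseub2}, my plan is to invoke the binomial theorem directly. Fixing $p \in (0,1)$, every term on the right-hand side of
\[ 1 = (p + (1-p))^n = \sum_{j=0}^n \binom{n}{j} p^j (1-p)^{n-j} \]
is nonnegative, so in particular $\binom{n}{k} p^k (1-p)^{n-k} \leq 1$, giving $\binom{n}{k} \leq p^{-k}(1-p)^{-(n-k)}$. Substituting $p = k/n$ produces the desired bound for $0 < k < n$; the boundary cases $k \in \{0, n\}$ follow trivially using the $0^0 := 1$ convention adopted in the paper.

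For the second inequality in \eqref{chooseub2}, I would first bound $\binom{n}{k} \leq n^k/k!$ by writing out the falling factorial in the numerator, and then apply the elementary estimate $k! \geq (k/\re)^k$, which is immediate from retaining the $j=k$ term in the series $\re^k = \sum_{j \geq 0} k^j/j!$. Combining these two gives $\binom{n}{k} \leq n^k \re^k k^{-k}$, again with the $0^0:=1$ convention handling $k=0$.

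The lower bound \eqref{chooselb} is the step that requires a sharper tool, and my approach would be to invoke Robbins' refinement of Stirling's formula: for each $m \geq 1$,
\[ m! = \sqrt{2\pi m}\, (m/\re)^m\, \re^{\theta_m}, \qquad \tfrac{1}{12m+1} < \theta_m < \tfrac{1}{12m}. \]
Applying this identity to each of the three factorials in $\binom{n}{k} = n!/(k!(n-k)!)$ for $0 < k < n$, the $\sqrt{2\pi\cdot}$ factors collect into $\sqrt{n/(2\pi k(n-k))}$, the powers of $\re^{-1}$ cancel, and the leading powers produce exactly the reciprocal of the entropy expression appearing on the right-hand side of \eqref{chooselb}. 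The exponential residual is $\exp(\theta_n - \theta_k - \theta_{n-k})$, and since $\theta_n > 0$ while $\theta_k, \theta_{n-k} < 1/12$ (using $k, n-k \geq 1$), this residual is bounded below by $\re^{-1/6}$, yielding the claim. There is no substantive obstacle here since all three estimates are textbook; the only care needed is in handling the degenerate cases for the upper bound and in quoting the Robbins two-sided bound for $\theta_m$ correctly.
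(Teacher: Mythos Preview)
Your argument for the first inequality in \eqref{chooseub2} via the binomial theorem is correct and is in fact more elementary than the paper's route, which derives it from the Robbins--Stirling bound (the same tool used for \eqref{chooselb}); your approach avoids Stirling entirely for the upper bound. Your proof of \eqref{chooselb} matches the paper's exactly.

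There is, however, a genuine gap in your treatment of the second inequality in \eqref{chooseub2}. The display is a chain $\binom{n}{k} \leq B \leq n^k \re^k k^{-k}$, where $B$ denotes the middle (entropy) expression; the ``second inequality'' is the assertion $B \leq n^k \re^k k^{-k}$. Your argument, via $\binom{n}{k} \leq n^k/k!$ and $k! \geq (k/\re)^k$, establishes only $\binom{n}{k} \leq n^k \re^k k^{-k}$. Combined with the first inequality $\binom{n}{k} \leq B$, this does \emph{not} yield $B \leq n^k \re^k k^{-k}$: you have bounded the smaller quantity, not the larger one. The fix is immediate and is what the paper does: since $B = (n/k)^k \cdot (n/(n-k))^{n-k}$, the claim $B \leq n^k \re^k k^{-k}$ reduces to
\[
\left(1 + \frac{k}{n-k}\right)^{n-k} \leq \re^{k},
\]
which is the standard bound $(1+x)^m \leq \re^{mx}$.
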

\begin{proof}
We apply
Robbins's
refinement of Stirling's formula (see e.g.~\cite[\S II.9]{feller1}),
which says that for any $n \geq 1$,
\[ n! = ( 2 \pi)^{1/2} n^{n+(1/2)} \re^{-n + \eps_n} ,\]
where $\frac{1}{12n+1} < \eps_n < \frac{1}{12n}$.
This yields the upper bound, for $n \geq 1$ and $k, n-k \geq 1$,
\begin{align}
\label{chooseub}
\binom{n}{k} & \leq
\left( \frac{n}{2 \pi k (n-k) } \right)^{1/2}
 \left( \left(\frac{k}{n}\right)^{k/n}
\left( \frac{n-k}{n} \right)^{(n-k)/n}
\right)^{-n} ,
\end{align}
where
we have used the fact that
\[ \eps_n - \eps_k - \eps_{n-k} \leq \frac{1}{12n} -
 \frac{12n +2}{144 k (n-k) +12n +1}
\leq \frac{1}{12n} -\frac{12n+2}{36n^2 +12n +1 } \leq 0
 ,
\]
since $k(n-k) \leq n^2/4$. By considering separately the cases
 (i) $k \in \{0,n\}$, and
(ii) $0<k <n$, 
using (\ref{chooseub})
 in case (ii), we obtain  the first inequality in (\ref{chooseub2}).
 The second inequality in (\ref{chooseub2}) follows from the fact that
 \[ \left( 1 - \frac{k}{n} \right)^{-(n-k)}  = \left( 1 + \frac{k}{n-k} \right)^{n-k} \leq \re^{k} .\]
 For the lower bound, another application of Robbins's bounds yields  (\ref{chooselb}),
where for the $\re^{-1/6}$ term we have used the fact that
$\eps_n - \eps_k - \eps_{n-k} \geq - \frac{1}{12k} - \frac{1}{12(n-k)} \geq -\frac{1}{6}$. \end{proof}

\section*{Acknowledgements}

The authors thank Julian West for pointing out the elementary application of
the pigeonhole principle in the proof of Lemma \ref{hypercycle-2core}, thus avoiding the use of linear algebra.

\end{document}